\setlist[enumerate]{label=\rm{(\roman*)}, leftmargin=*, widest=iii}
\newtheorem{theorem}{Theorem}[section]
\newtheorem{corollary}[theorem]{Corollary}
\newtheorem{lemma}[theorem]{Lemma}
\newtheorem{proposition}[theorem]{Proposition}
\theoremstyle{remark}
\newtheorem{remark}[theorem]{Remark}
\newtheorem{question}[theorem]{Question}
\newtheorem{example}[theorem]{Example}
\numberwithin{equation}{section}
\newcommand{\N}{\mathbb{N}}
\newcommand{\R}{\mathbb{R}}
\newcommand{\C}{\mathbb{C}}
\newcommand{\M}{\mathrm{Mix}}
\newcommand{\MM}{\overline{\mathrm{Mix}}}
\newcommand{\sa}{\mathrm{sa}}
\renewcommand{\epsilon}{\varepsilon}
\renewcommand{\phi}{\varphi}
\renewcommand{\leq}{\leqslant}
\renewcommand{\geq}{\geqslant}
\date{}
\title{Maximally unitarily mixed states on a C*-algebra}
\author
[Archbold]{Robert Archbold}
\address{Robert Archbold
	\\Institute of Mathematics
	\\University of Aberdeen
	\\King's College
	\\Aberdeen AB24 3UE
	\\Scotland
	\\United Kingdom
} \email{r.archbold@abdn.ac.uk}
\author
[Robert]{Leonel Robert}
\address{Leonel Robert
	\\Department of Mathematics
	\\University of Louisiana at Lafayette
	\\Lafayette, 70504-3568
	\\USA
} \email{lrobert@louisiana.edu}
\author
[Tikuisis]{Aaron Tikuisis}
\address{Aaron Tikuisis
	\\Department of Mathematics and Statistics
	\\University of Ottawa
	\\585 King Edward
	\\Ottawa, ON K1N 6N5
	\\Canada
}\email{aaron.tikuisis@uottawa.ca}
\thanks{A.T.\ was partially supported by an NSERC Postdoctoral Fellowship and through the EPSRC grant EP/N00874X/1.}
\begin{document}
	
\begin{abstract}
We investigate the set of maximally mixed states of a C*-algebra, extending previous work by Alberti on von Neumann algebras.
We show that, unlike for von Neumann algebras, the set of maximally mixed states of a C*-algebra may fail to be  weak* closed.
We obtain, however,  a concrete description of the weak* closure of this set, in terms of tracial states and states which factor through simple traceless quotients.  For C*-algebras with the Dixmier property or with Hausdorff primitive spectrum
we are able to advance our investigations further. In the latter case we obtain a concrete description of the set of maximally mixed states in terms
of traces and extensions of the states of a closed two-sided ideal.  We pose several questions.
\end{abstract}	
	
\subjclass[2010]{46L30}
\keywords{States of C*-algebras, Unitary mixings, Dixmier property}	

\maketitle	

\section{Introduction}
Investigations into the entropy and irreversibility of the states of a physical system lead  to the consideration of the ``more mixed than'' or ``more chaotic than'' pre-order
on the space of states of a C*-algebra.  This pre-order, first introduced by Uhlmann,  has been investigated for the state spaces of matrix algebras and, more generally,
of von Neumann algebras,  by Alberti, Uhlmann, Wehrl, and others  (\cite{Alb-Uhl0,Alb-Uhl, Uhlmann, Wehrl}).  Uhlmann also introduced a distinguished collection of states:  the maximally mixed states. These are the  maximal elements in the ``more mixed than'' pre-order.  In \cite[Theorem 5.2]{Alberti},
Alberti gave a complete description of the maximally mixed states of a von Neumann algebra.  In this paper we undertake the study of the maximally mixed states of a C*-algebra. In particular, we probe the extent to which Alberti's theorem can be extended to arbitrary  C*-algebras.

Let us be more specific. Let $A$ be a C*-algebra.  Given two states $\phi$ and $\psi$ on $A$, let's say that $\psi$ is more (unitarily) mixed than $\phi$ if $\psi$ belongs to the weak* closure of the convex hull of the unitary conjugates of $\phi$. A state $\phi$ is maximally (unitarily) mixed if whenever $\psi$ is more mixed than $\phi$ then $\phi$ is also more mixed than $\psi$. Maximally mixed states are guaranteed to exist by weak* compactness and Zorn's lemma (in fact, given any state $\phi$ there exist maximally mixed states that are more mixed than $\phi$).
We denote the set of maximally mixed states of $A$ by $S_\infty(A)$.

The main question that we address here is ``can the set $S_\infty(A)$ be described more concretely?''.  The tracial states on $A$ are obviously maximally mixed.
Another source of maximally mixed states on $A$ is the quotients $A/M$ that are simple and have no bounded traces.
The states (on $A$) that factor through these quotients are  also maximally mixed.
Alberti showed that if  $A$ is a von Neumann algebra  then $S_\infty(A)$ is the weak* closure of the  convex hull of the tracial states and the states that factor through simple traceless quotients (see \cite{Alberti}, though it is not quite stated this way).
We demonstrate below with natural examples  that  the set $S_\infty(A)$ need not always be weak* closed.
It is the case, however, that the weak* closure of $S_\infty(A)$ is precisely
the weak* closure of the  convex hull of the tracial states and the states factoring through simple traceless quotients (Theorem \ref{maxmixS}).
We leave open the question of the convexity of $S_\infty(A)$.

For C*-algebras with the Dixmier property we are able to advance our understanding of $S_\infty(A)$ further.
Recall that $A$ is said to have the Dixmier property if for every   $a\in  A$ the norm closure of the convex hull of the unitary conjugates of $a$ intersects the center of $A$.
Von Neumann algebras have the Dixmier property (by Dixmier's approximation theorem), but the class of C*-algebras with the Dixmier property is much larger (see \cite{ART}). We show  that if $A$ has the Dixmier property then $S_\infty(A)$
is convex and weakly closed.  Further, we obtain a necessary and sufficient condition (involving the  primitive spectrum) for  $S_\infty(A)$ to be weak* closed (Theorem \ref{DPweaklyclosed}).

 The paper is organized as follows: In Section \ref{Prelims} we introduce notation. In Section \ref{Maxmixed} we embark on the investigation of the maximally mixed states of an arbitrary C*-algebra. In Section \ref{DPcase} we consider C*-algebras with the Dixmier property. In Section \ref{Hausdorffcase} we rely on the results from the previous sections to obtain a concrete description of  the set of maximally mixed states of a C*-algebra with Hausdorff primitive spectrum.

\section{Preliminaries on Dixmier sets}	\label{Prelims}
Let $A$ be a C*-algebra.  We denote by $A_\sa$ the set of self-adjoint elements of $A$ and by $A_+$ the set of  positive elements of $A$.
If $A$ is unital we denote by $U(A)$ the group of unitary elements of $A$.

We denote by $A^\sim$ the minimal  unitization of $A$, i.e., $A$ itself if $A$ is unital and the unitization $A+\C1$ if $A$ is non-unital.

Let $A^*$ denote the dual of $A$.  We denote by $A_\sa^*$ the set of self-adjoint functionals in $A^*$ and by $A_+^*$ the set  of positive functionals
in $A^*$.

We denote the convex hull of a set $S$ (in an affine space) by $\mathrm{co}(S)$.

\subsection{Dixmier sets in $A$ and $A^*$}
We call a set $C\subseteq A$ a Dixmier set if it is convex, norm-closed, and invariant under unitary conjugation. The latter means that $uCu^*\subseteq C$ for all unitaries $u\in U(A^\sim)$.  We will largely work with singly generated Dixmier sets. Given $a\in A$ we denote by $D_A(a)$ the smallest Dixmier set containing $a$.

We let $A$, and more generally $M(A)$ (the multiplier algebra of $A$), act on $A^*$ in the usual way:
if $a\in M(A)$  and $\phi\in A^*$ then
\[
a\phi(x):=\phi(ax),\quad(\phi a)(x):=\phi(xa) \qquad (x\in A).
\]
A set $C\subseteq A^{*}$ is called a Dixmier set if it is convex, weak* closed, and invariant under unitary conjugation. The latter condition means that $uCu^*\subseteq C$ for all unitaries $u\in  U(A^\sim)$. Given $\phi\in A^*$ we denote by $D_A(\phi)$ the Dixmier set generated by $\phi$, i.e., the smallest Dixmier set containing $\phi$. Since $D_A(\phi)$ is weak* closed and bounded, it is weak* compact.

We shall make frequent use of the standard fact that $A$ is the dual of $A^*$ when the latter is  endowed with the weak* topology. This, combined with the Hahn-Banach theorem, implies that elements of $A$ separate disjoint weak* compact convex sets in $A^*$.

Let $\mathcal V$ be a subgroup of the unitary group $U(M(A))$ of $M(A)$.
On some occasions we will need  more general versions of the sets defined above
where the unitaries range through $\mathcal V$ rather than $U(A^\sim)$. Thus, given $a\in A$ we define
$D_A(a,\mathcal V)$ as the smallest norm-closed  convex subset of $A$ containing $a$ and invariant under conjugation by unitaries in
$\mathcal V$.  Similarly, given $\phi\in A^*$ we define $D_A(\phi,\mathcal V)$
as the the smallest weak* closed  convex subset of $A^*$ containing $\phi$ and invariant under conjugation by unitaries in
$\mathcal V$.

\subsection{Mixing operators}
Let $\mathcal V$ be a subgroup of the unitary group $U(M(A))$ of $M(A)$.
We call a linear operator $T\colon A\to A$   a $\mathcal V$-mixing operator if it is
defined by an equation of the form
\[
Ta = \sum^n_{j=1}\lambda_ju_ja u^*_j  \qquad (a\in A),
\]
where $n\in \N$, $\lambda_j>0$, $u_j\in \mathcal V$ ($1\leq j\leq n$), and $\sum^n_{j=1}\lambda_j=1$. Elementary properties of such operators are described in \cite[2.2]{Arch-PLMS}.  We denote
by $\M(A,\mathcal V)$ the set of $\mathcal V$-mixing operators on $A$. If $\mathcal V=U(A^\sim)$
we simply write $\M(A)$.
Notice that
\[
D_A(a,\mathcal V)=\overline{\{Ta : T\in \M(A,\mathcal V)\}}^{\|\cdot\|}.
\]

We also call an operator $T\colon A^*\to A^*$ a $\mathcal V$-mixing operator if it is the adjoint of a $\mathcal V$-mixing operator on $A$.
In this case $T$ has the form
\[
T\phi = \sum^n_{j=1}\lambda_ju_j\phi u^*_j  \qquad (\phi\in A^*),
\]
where $n\in \N$, $\lambda_j>0$, $u_j\in \mathcal V$ ($1\leq j\leq n$), and $\sum^n_{j=1}\lambda_j=1$.
Observe  that $T$ is positive ($T\phi\geq 0$ for all $\phi\geq 0$) and contractive.
We denote the set of $\mathcal V$-mixing operators on $A^*$ by $\M(A^*,\mathcal V)$ or simply by $\M(A^*)$ if $\mathcal V=U(A^\sim)$. Notice  that
\[
D_A(\phi,\mathcal V)=\overline{\{T\phi: T\in \M(A^*,\mathcal V)\}}^{\mathrm{weak}^*}.
\]

\begin{lemma}\label{DphiphiD}
	Let $a\in A$ and $\phi\in A^*$. Then
	\begin{equation}
	D_A(\phi,\mathcal V)(a)=\overline{\phi(D_A(a,\mathcal V))}.
	\end{equation}
\end{lemma}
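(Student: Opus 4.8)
The plan is to reduce both sides of the asserted equality to statements about the two explicit families
\[
E := \{Ta : T \in \M(A,\mathcal V)\} \subseteq A, \qquad F := \{T^*\phi : T \in \M(A,\mathcal V)\} \subseteq A^*,
\]
where $T^*\colon A^*\to A^*$ is the Banach-space adjoint of $T$, i.e.\ the corresponding element of $\M(A^*,\mathcal V)$. The link between them is the defining relation of the adjoint, $(T^*\phi)(a) = \phi(Ta)$, which holds for every $T\in\M(A,\mathcal V)$; thus evaluating the functionals of $F$ at the fixed element $a$ returns precisely $\phi(E)$. By the two displayed identities recorded just before the lemma, $D_A(a,\mathcal V) = \overline{E}^{\|\cdot\|}$ and $D_A(\phi,\mathcal V) = \overline{F}^{\mathrm{weak}^*}$.

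For the left-hand side I would work with the evaluation functional $\mathrm{ev}_a\colon A^*\to\C$, $\mathrm{ev}_a(\psi)=\psi(a)$, which is weak* continuous since it is nothing but $a$ regarded as an element of the dual of $(A^*,\mathrm{weak}^*)$. Because each $T^*$ is contractive, $F$ lies in the closed ball of radius $\|\phi\|$, so $D_A(\phi,\mathcal V)=\overline{F}^{\mathrm{weak}^*}$ is weak* compact by Banach--Alaoglu; hence $\mathrm{ev}_a\bigl(D_A(\phi,\mathcal V)\bigr)$ is a compact, and therefore closed, subset of $\C$. A short continuity argument then yields
\[
D_A(\phi,\mathcal V)(a) = \mathrm{ev}_a\bigl(\overline{F}^{\mathrm{weak}^*}\bigr) = \overline{\mathrm{ev}_a(F)} = \overline{\phi(E)} .
\]
The inclusion $\mathrm{ev}_a(\overline F^{\mathrm{weak}^*})\subseteq\overline{\mathrm{ev}_a(F)}$ is immediate from continuity (pass to nets), while the reverse inclusion is where compactness enters: the left side is already closed and contains $\mathrm{ev}_a(F)$, hence contains its closure.

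For the right-hand side only the norm-continuity of $\phi$ is needed. From $E\subseteq\overline{E}^{\|\cdot\|}$ one gets the sandwich $\phi(E)\subseteq\phi\bigl(\overline{E}^{\|\cdot\|}\bigr)\subseteq\overline{\phi(E)}$, and passing to closures collapses these to $\overline{\phi(E)}=\overline{\phi(\overline{E}^{\|\cdot\|})}=\overline{\phi(D_A(a,\mathcal V))}$. Combining this with the previous display gives the claim.

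I do not expect a serious obstacle: once the adjoint identity is in place the rest is topological bookkeeping. The one step that is genuinely more than formal is the middle equality of the second paragraph, since the continuous image of a weak* closure need not be closed in general. It is precisely the weak* compactness of $D_A(\phi,\mathcal V)$ — which rests on the contractivity of mixing operators together with the boundedness of $\phi$ — that upgrades the automatic inclusion to an equality there.
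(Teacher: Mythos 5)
Your proof is correct and follows essentially the same route as the paper's: both hinge on the adjoint identity $(T^*\phi)(a)=\phi(Ta)$, the weak* compactness of $D_A(\phi,\mathcal V)$ (making the left-hand side closed), and the density of $\{Ta\}$ and $\{T^*\phi\}$ in the respective Dixmier sets. Your write-up merely spells out the topological bookkeeping that the paper leaves implicit.
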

\begin{proof}
	Since $D_A(\phi,\mathcal V)$ is weak$^*$-compact, $D_A(\phi,\mathcal V)(a)$ is a closed	subset of $\C$. To prove the lemma it suffices to show that $\phi(D_A(a,\mathcal V))$ is a dense subset of $D_A(\phi,\mathcal V)(a)$.  Let $T\in \M(A,\mathcal V)$. Then
	$(T^*\phi)(a)=\phi(Ta)$.
	Letting $T$ range through all $\M(A,\mathcal V)$ the left side is dense in $D_A(\phi,\mathcal V)(a)$ while the right side is dense in $\phi( D_A(a,\mathcal V))$.
\end{proof}

We will find it convenient to work with more general unitary mixing  operators on $A^*$. We let $\MM(A^*,\mathcal V)$ denote the closure of $\M(A^*,\mathcal V)$
in the point-weak$^*$ topology on $B(A^*)$ (the bounded linear operators on $A^*$). If $\mathcal V=U(A^\sim)$ we simply write $\MM(A^*)$.
Since a limit in the point-weak$^*$ topology of positive contractions is again a positive contraction,  all $T\in \MM(A^*,\mathcal V)$ are positive contractions.
Since the unit ball of   $B(A^*)$ is compact in the point-weak$^*$ topology, $\MM(A^*,\mathcal V)$ is  a compact set in this topology.

\begin{lemma}\label{MMoperators}
	Let $\phi\in A^*$. Then $D_A(\phi,\mathcal V)=\{T\phi :T\in \MM(A^*,\mathcal V)\}$.
\end{lemma}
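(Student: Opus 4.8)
The plan is to prove the two inclusions separately, the nontrivial one resting on the point-weak* compactness of $\MM(A^*,\mathcal V)$ noted just above the statement.

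For the inclusion $\{T\phi : T\in \MM(A^*,\mathcal V)\}\subseteq D_A(\phi,\mathcal V)$, I would begin with $T\in \MM(A^*,\mathcal V)$ and choose a net $(T_\alpha)$ in $\M(A^*,\mathcal V)$ converging to $T$ in the point-weak* topology. Then in particular $T_\alpha\phi\to T\phi$ weak*. Since each $T_\alpha\phi$ lies in $D_A(\phi,\mathcal V)=\overline{\{S\phi:S\in\M(A^*,\mathcal V)\}}^{\mathrm{weak}^*}$, and this set is weak* closed by construction, the limit $T\phi$ belongs to $D_A(\phi,\mathcal V)$ as well. This direction is routine.

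For the reverse inclusion $D_A(\phi,\mathcal V)\subseteq \{T\phi : T\in \MM(A^*,\mathcal V)\}$, I would fix $\psi\in D_A(\phi,\mathcal V)$. Using the description of $D_A(\phi,\mathcal V)$ as the weak* closure of the orbit $\{S\phi : S\in\M(A^*,\mathcal V)\}$, pick a net $(S_\alpha)$ in $\M(A^*,\mathcal V)$ with $S_\alpha\phi\to\psi$ weak*. The key step is to promote this to a statement about the operators themselves: since $\MM(A^*,\mathcal V)$ is compact in the point-weak* topology, the net $(S_\alpha)$ admits a subnet $(S_{\alpha_\beta})$ converging point-weak* to some $T\in\MM(A^*,\mathcal V)$. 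Evaluating along this subnet at $\phi$ gives $S_{\alpha_\beta}\phi\to T\phi$ weak*, while at the same time $S_{\alpha_\beta}\phi\to\psi$ because it is a subnet of a weak*-convergent net. By uniqueness of weak* limits we conclude $T\phi=\psi$, which places $\psi$ in the desired set.

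The only delicate point is the compactness argument in the second inclusion: one cannot expect the net $(S_\alpha)$ of genuine mixing operators to converge (even after passing to a subnet) within $\M(A^*,\mathcal V)$, and it is precisely to capture such limits that the enlarged class $\MM(A^*,\mathcal V)$ was introduced. Once the convergent subnet $(S_{\alpha_\beta})\to T$ is in hand, the remainder follows from the continuity of the evaluation $S\mapsto S\phi$ (which is exactly what convergence in the point-weak* topology encodes) together with uniqueness of weak* limits; no further estimates are needed.
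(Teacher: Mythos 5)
Your proof is correct and follows essentially the same route as the paper's: the paper likewise takes a net of mixing operators with $T_i\phi\to\psi$ weak* and passes to a point-weak*-convergent subnet inside the compact set $\MM(A^*,\mathcal V)$ to write $\psi=T\phi$. Your spelled-out argument for the easy inclusion (which the paper dismisses as ``clearly'') is also fine, since $D_A(\phi,\mathcal V)$ is weak* closed and contains the orbit $\{S\phi: S\in\M(A^*,\mathcal V)\}$.
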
	
\begin{proof}
	Clearly, $T\phi\in D_A(\phi,\mathcal V)$ for all $T\in \MM(A^*,\mathcal V)$. Suppose that  $ \psi\in D_A(\phi,\mathcal V)$. Then
	$T_i\phi\to \psi$ in the weak* topology for some net of $\mathcal V$-mixing operators $(T_i)_i$ on $A^*$.
	Passing to a  subnet of $(T_i)_i$ convergent in the point-weak* topology we get that $\psi=T\phi$ for some $T\in \MM(A^*,\mathcal V)$.
\end{proof}

\section{Maximally mixed functionals}\label{Maxmixed}
Let $\phi\in A^*$. If $\psi\in D_A(\phi)$ we say that $\psi$ is more unitarily mixed than $\phi$.
We say that $\phi $ is \emph{maximally (unitarily) mixed} if  $D_A(\phi)$ is minimal  with respect to the order by inclusion
in the lattice of weak$^*$-compact Dixmier subsets of $A^*$. Thus $\phi$ is maximally mixed if and only if for all $\psi\in D_A(\phi)$ we have $D_A(\psi)=D_A(\phi)$.

It follows from Zorn's lemma that any weak* compact Dixmier set contains a maximally mixed functional. In particular, $D_A(\phi)$ contains a maximally mixed functional for all $\phi\in A^*$.
Note also that (i) the zero functional is maximally mixed, (ii) if $\phi$ is tracial then $D_A(\phi)=\{\phi\}$ and hence $\phi$ is maximally mixed, and (iii) if $\phi$ is maximally mixed and $\lambda\in \C$
then $\lambda\phi$ is maximally mixed.

\begin{theorem}\label{jordan}
	Let $A$ be a C*-algebra and let $\phi\in A^*$ be maximally mixed. Then the  self-adjoint and skew-adjoint parts of $\phi$ are maximally mixed.
	If $\phi$ is self-adjoint, then its positive and negative parts are maximally mixed.
\end{theorem}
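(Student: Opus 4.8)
The plan is to begin by recording a convenient reformulation of maximal mixedness: by the minimality characterization and Lemma \ref{MMoperators}, a functional $\phi$ is maximally mixed if and only if $\phi\in D_A(\psi)$ for every $\psi\in D_A(\phi)$. Indeed, if $\psi\in D_A(\phi)$ then $D_A(\psi)\subseteq D_A(\phi)$ automatically (as $D_A(\phi)$ is a weak*-compact Dixmier set containing $\psi$), so minimality of $D_A(\phi)$ is equivalent to the reverse inclusion $D_A(\phi)\subseteq D_A(\psi)$ for all such $\psi$, which holds precisely when $\phi\in D_A(\psi)$. I would use this form throughout.

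For the self-adjoint and skew-adjoint parts I would exploit that these operations are real-linear and commute with mixing. The adjoint map $\psi\mapsto\psi^*$, where $\psi^*(x)=\overline{\psi(x^*)}$, is a weak*-continuous conjugate-linear involution satisfying $(u\psi u^*)^*=u\psi^*u^*$; hence $(T\psi)^*=T(\psi^*)$ first for $T\in\M(A^*)$ and then, by point-weak* continuity, for every $T\in\MM(A^*)$. Consequently $\psi\mapsto\phi_\sa:=\tfrac12(\psi+\psi^*)$ and $\psi\mapsto\tfrac12(\psi-\psi^*)$ commute with every $T\in\MM(A^*)$, so by Lemma \ref{MMoperators} one gets $D_A(\phi_\sa)=\{(T\phi)_\sa:T\in\MM(A^*)\}=\{\eta_\sa:\eta\in D_A(\phi)\}$. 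Now given $\psi=\eta_\sa\in D_A(\phi_\sa)$ with $\eta\in D_A(\phi)$, maximal mixedness of $\phi$ supplies (via the reformulation and Lemma \ref{MMoperators}) some $S\in\MM(A^*)$ with $S\eta=\phi$; then $\phi_\sa=(S\eta)_\sa=S(\eta_\sa)=S\psi\in D_A(\psi)$. By the reformulation $\phi_\sa$ is maximally mixed, and the skew-adjoint part is treated identically.

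For the positive and negative parts the map $\psi\mapsto\psi_+$ is no longer linear, so the above breaks down, and this is where the real content lies. The extra ingredient I would extract from maximal mixedness is that the norm is \emph{constant} on $D_A(\phi)$: for $\psi\in D_A(\phi)$ contractivity of mixing operators gives $\|\psi\|\leq\|\phi\|$, while $\phi\in D_A(\psi)$ gives $\|\phi\|\leq\|\psi\|$, whence $\|\psi\|=\|\phi\|$. Assume $\phi$ self-adjoint, so every $\eta\in D_A(\phi)$ is self-adjoint. I then claim $(T\eta)_\pm=T(\eta_\pm)$ for all $\eta\in D_A(\phi)$ and $T\in\MM(A^*)$. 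Writing $T\eta=T\eta_+-T\eta_-$ as a difference of positive functionals and using that $T$ is a positive contraction, we get the chain $\|T\eta_+\|+\|T\eta_-\|\leq\|\eta_+\|+\|\eta_-\|=\|\eta\|=\|T\eta\|\leq\|T\eta_+\|+\|T\eta_-\|$, where the middle equality is norm-constancy (both $\eta$ and $T\eta$ lie in $D_A(\phi)$) and the first is the Jordan norm identity. Equality throughout forces $T\eta_+-T\eta_-$ to be a minimal decomposition of $T\eta$, and minimality (uniqueness) of the Jordan decomposition yields $(T\eta)_\pm=T\eta_\pm$. With this, $D_A(\phi_+)=\{\eta_+:\eta\in D_A(\phi)\}$, and the argument of the previous paragraph carries over verbatim: for $\psi=\eta_+\in D_A(\phi_+)$ pick $S\in\MM(A^*)$ with $S\eta=\phi$, so $\phi_+=(S\eta)_+=S(\eta_+)=S\psi\in D_A(\psi)$; the negative part follows by applying this to $-\phi$, which is maximally mixed by remark (iii) and satisfies $(-\phi)_+=\phi_-$.

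The main obstacle is exactly the identity $(T\eta)_\pm=T(\eta_\pm)$: taking positive parts does not commute with mixing operators in general, and it is the norm-constancy forced by maximal mixedness, together with the minimality of the Jordan decomposition, that rescues the linear argument. I would therefore need to state and use the minimality fact explicitly, namely that any decomposition $\eta=\alpha-\beta$ with $\alpha,\beta\geq0$ and $\|\alpha\|+\|\beta\|=\|\eta\|$ must coincide with $\eta_+-\eta_-$; this rests on additivity of the norm on positive functionals, and in the non-unital case it is cleanest to argue through that additivity rather than through evaluation at a unit.
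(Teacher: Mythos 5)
Your proposal is correct and follows essentially the same route as the paper's proof: mixing operators commute with the self-adjoint and skew-adjoint parts, and for the positive/negative parts the two key ingredients are norm-constancy on $D_A(\phi)$ (forced by maximal mixedness) combined with uniqueness of the Jordan decomposition. The only cosmetic difference is that you establish $(T\eta)_\pm=T\eta_\pm$ for every $\eta\in D_A(\phi)$ and then apply $S$ directly to $\eta$, whereas the paper proves the identity only at $\phi$ itself and instead composes operators, computing $S\psi=ST\phi_+=(ST\phi)_+=\phi_+$.
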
	
\begin{proof}
Let $\phi_\sa$ denote the self-adjoint part of $\phi$. Let $\psi\in D_A(\phi_\sa)$. Then $\psi=T\phi_\sa$
for some $T\in \MM(A^*)$ (Lemma \ref{MMoperators}).  Mixing operators
in $\MM(A^*)$ preserve the self-adjoint part. So $\psi$ is the self-adjoint part of $T\phi$.
 Since $\phi$ is maximally mixed and $T\phi\in D_A(\phi)$, there exists $S\in \MM(A^*)$ such that $ST\phi=\phi$.  Taking self-adjoint  parts we get $S\psi=\phi_{\sa}$. Thus, $\phi_{\sa}\in D_A(\psi)$, as desired. The same argument applies to the skew-adjoint part.
	
	Suppose now that  $\phi$ is self-adjoint (and maximally mixed). Let us show first that $(T\phi)_+=T\phi_+$
	and $(T\phi)_-=T\phi_-$ for any $T\in \MM(A^*)$. Observe that $\|\psi\|\leq \|\phi\|$ for all $\psi\in D_A(\phi)$. But, since $\phi$ is maximally mixed, we  must have that $\|\psi\|=\|\phi\|$ for all $\psi\in D_A(\phi)$. That is,  all the functionals in $D_A(\phi)$ have the same norm. Applying $T$ on both sides of $\phi=\phi_+-\phi_-$
	we get $T\phi=T\phi_+-T\phi_-$. Then,
	\[
	\|T\phi_+\|+\|T\phi_-\|\leq \|\phi_+\|+\|\phi_-\|=\|\phi\|=\|T\phi\|.
	\]
	It follows that $T\phi_+$ and $T\phi_-$ are orthogonal (\cite[Lemma 3.2.3]{Ped}). By the uniqueness of the Jordan decomposition (\cite[Theorem 3.2.5]{Ped}), $(T\phi)_+=T\phi_+$
	and $(T\phi)_-=T\phi_-$.
	
	That $\phi_+$
	and $\phi_-$ are maximally mixed is now straightforward. For suppose  that $\psi\in D_A(\phi_+)$. By Lemma \ref{MMoperators}, there exists  $T\in \MM(A^*)$ such that   $\psi=T\phi_+$.  Further,
	since $\phi$ is maximally mixed,
	there exists $S\in \MM(A^*)$ such that   $ST\phi=\phi$. Then
	$S\psi=ST\phi_+=(ST\phi)_+=\phi_+$.  Thus, $\phi_+$ is maximally mixed. The same argument shows that $\phi_-$ is maximally mixed.
\end{proof}	

Due in part to the previous theorem, in the sequel our focus  will be on the positive maximally mixed functionals.
We warn however that it is not true that a self-adjoint functional whose positive and negative parts are maximally mixed is itself maximally mixed: see Example \ref{JordanConverseFails}.

\begin{theorem}\label{normclosed}
	Let $A$ be a C*-algebra. The set of maximally mixed functionals is a norm-closed subset of $A^*$.
\end{theorem}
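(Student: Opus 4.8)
The plan is to recast maximal mixing in purely operator-theoretic terms and then exploit two features of $\MM(A^*)$: its compactness in the point-weak* topology, and the fact that all its elements are contractions. First I would reformulate the definition. Since $D_A(\psi)$ is by definition the smallest Dixmier set containing $\psi$, any $\psi\in D_A(\phi)$ automatically satisfies $D_A(\psi)\subseteq D_A(\phi)$; hence $\phi$ is maximally mixed exactly when $\phi\in D_A(\psi)$ for every $\psi\in D_A(\phi)$. Invoking Lemma \ref{MMoperators} on both sides, this becomes the convenient criterion: $\phi$ is maximally mixed if and only if for every $T\in\MM(A^*)$ there exists $S\in\MM(A^*)$ with $ST\phi=\phi$.

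With this criterion in hand, let $(\phi_n)$ be maximally mixed functionals with $\phi_n\to\phi$ in norm (it suffices to argue with sequences, as $A^*$ is a normed space), and fix $T\in\MM(A^*)$; the goal is to produce a single $S\in\MM(A^*)$ with $ST\phi=\phi$. Applying the criterion to each $\phi_n$ yields operators $S_n\in\MM(A^*)$ satisfying $S_nT\phi_n=\phi_n$.

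The crux is to transfer these identities, which concern $\phi_n$, to an identity concerning $\phi$. The contractivity of the mixing operators does this almost for free. Writing
\[
S_nT\phi-\phi=S_nT(\phi-\phi_n)+(\phi_n-\phi),
\]
and using that $S_nT$ is a contraction (each of $S_n,T$ being a positive contraction in $\MM(A^*)$), I obtain $\|S_nT\phi-\phi\|\leq 2\|\phi-\phi_n\|\to 0$, so $S_nT\phi\to\phi$ in norm, hence weak*. Finally, by compactness of $\MM(A^*)$ in the point-weak* topology I pass to a subnet along which $S_n\to S$ point-weak* for some $S\in\MM(A^*)$; then $S_n(T\phi)\to S(T\phi)$ weak*, and uniqueness of weak* limits forces $ST\phi=\phi$. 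By the criterion, $\phi$ is maximally mixed.

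The step I expect to be the main obstacle is precisely this mismatch between the available data (an exact identity at each $\phi_n$) and the target (an identity at the limit $\phi$): one cannot directly pass to the limit in $S_nT\phi_n=\phi_n$, because the $S_n$ need only converge along a subnet and the right-hand sides $\phi_n$ are themselves moving. The resolution is to decouple the two convergences—using contractivity to get \emph{norm} convergence $S_nT\phi\to\phi$ for free, and using point-weak* compactness separately to extract the limiting operator $S$—and then reconcile them through uniqueness of weak* limits.
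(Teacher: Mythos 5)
Your proof is correct and takes essentially the same approach as the paper: both arguments reduce maximal mixedness to the criterion ``for every $T\in\MM(A^*)$ there is $S\in\MM(A^*)$ with $ST\phi=\phi$'', approximate $\phi$ in norm by maximally mixed functionals, and use contractivity of mixing operators to obtain the same estimate $\|S_nT\phi-\phi\|\leq 2\|\phi-\phi_n\|$. The only difference is cosmetic and lies in the closing step: where you extract a point-weak* limit operator $S$ from the $S_n$ via compactness of $\MM(A^*)$, the paper instead observes that $S_nT\phi\in D_A(T\phi)$ and that $D_A(T\phi)$ is norm-closed, so $\phi\in D_A(T\phi)$ follows directly without any subnet extraction.
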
	
\begin{proof}
	Let $\phi\in A^*$ be in the norm-closure of the set of maximally mixed functionals.	Let $\psi\in D_A(\phi)$. By Lemma \ref{MMoperators},   there exists $T\in \MM(A^*)$ such that  $\psi=T\phi$.
	Let $\epsilon>0$.
	Then there exists a maximally mixed $\tilde\phi$ such that $\|\phi- \tilde\phi\|<\epsilon$.
	Since $T$ is a contraction,
	\[
	\|\psi-T\tilde\phi\|=\|T\phi-T\tilde\phi\|\leq \|\phi- \tilde\phi\|<\epsilon.
	\]
	Since $\tilde \phi$ is maximally mixed, there exists $S\in \MM(A^*)$
	such that $ST\tilde \phi=\tilde\phi$. Then,
	\[
	\|S\psi-\tilde\phi\|=\|S\psi-ST\tilde\phi\|\leq \|\psi-T\tilde\phi\|<\epsilon.
	\]
	So $\|\phi-S\psi\|< 2\epsilon$.  Since $D_A(\psi)$ is norm-closed, we
	have $\phi\in D_A(\psi)$ and hence $D_A(\psi)=D_A(\phi)$. Thus, $\phi$ is maximally mixed.
\end{proof}	

We will show in Examples \ref{notweaklyclosed} and \ref{notweaklyclosed2} that the set of maximally mixed functionals is not always weak* closed. We do have the following:

\begin{proposition} Let $A$ be a  unital C*-algebra and let $\phi\in A^*_+$.
	\begin{enumerate}
		\item
		Suppose that for every $a\in A_{\mathrm{sa}}$ and $\epsilon>0$
		there exists a maximally mixed $\phi'\in A_+^*$ such that $\phi'\leq \phi$ and
		$|\phi(a)-\phi'(a)|<\epsilon$. Then $\phi$ is maximally mixed.
		
		\item
		Suppose that for every $a\in A_{\mathrm{sa}}$ and $\epsilon>0$
		there exists a maximally mixed $\phi'\in A_+^*$ such that $\phi'\geq \phi$ and
		$|\phi(a)-\phi'(a)|<\epsilon$. Then $\phi$ is maximally mixed.
		
		\item
		Suppose that $(\phi_i)_i$ is a norm-bounded net of maximally mixed  functionals in $A^*_{+}$ which is either upward directed or downward directed relative to the order in $A_{+}^*$. Then the net is convergent and the limit is maximally mixed.
	\end{enumerate}		
\end{proposition}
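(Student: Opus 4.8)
The plan is to derive parts (i) and (ii) directly from Theorem \ref{normclosed}, and then to obtain part (iii) as a consequence of (i) and (ii). The single observation that makes everything work is that, since $A$ is \emph{unital}, every $\rho\in A^*_+$ satisfies $\|\rho\|=\rho(1)$; consequently two comparable positive functionals that nearly agree at the unit are automatically close in norm. This converts the weak*-type approximation in the hypotheses into genuine norm approximation, after which norm-closedness of the maximally mixed functionals finishes the job.

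For part (i) I would apply the hypothesis with the single test element $a=1$. For each $\epsilon>0$ this yields a maximally mixed $\phi'\in A^*_+$ with $\phi'\leq\phi$ and $\phi(1)-\phi'(1)<\epsilon$ (the modulus may be dropped because $\phi\geq\phi'$ forces $\phi(1)\geq\phi'(1)$). Since $\phi-\phi'\geq 0$, we get $\|\phi-\phi'\|=(\phi-\phi')(1)=\phi(1)-\phi'(1)<\epsilon$, so $\phi$ lies in the norm-closure of the set of maximally mixed functionals; Theorem \ref{normclosed} then gives that $\phi$ is maximally mixed. Part (ii) is the mirror image, using $\phi'\geq\phi$ and $\|\phi'-\phi\|=\phi'(1)-\phi(1)$.

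For part (iii), I would first produce the limit and then check it against (i) or (ii). Suppose the net $(\phi_i)_i$ is upward directed. For each $a\in A_+$ the net $\phi_i(a)$ is increasing and bounded above (by norm-boundedness), hence convergent; extending through $a=a_+-a_-$ and the real/imaginary decomposition defines a bounded linear functional $\phi$ with $\phi_i\to\phi$ weak* and $\phi\in A^*_+$. Because $\phi_i(a)\leq\phi(a)$ for all $a\in A_+$ we have $\phi_i\leq\phi$ for every $i$, and weak* convergence supplies, for each $a\in A_\sa$ and $\epsilon>0$, an index $i$ with $|\phi(a)-\phi_i(a)|<\epsilon$. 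Thus $\phi'=\phi_i$ witnesses the hypothesis of part (i), and $\phi$ is maximally mixed. The downward-directed case is symmetric and invokes part (ii). I would also record that the convergence is in fact norm convergence: from $\phi-\phi_i\geq 0$ one gets $\|\phi-\phi_i\|=\phi(1)-\phi_i(1)\to 0$.

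Once the unit-trick is isolated, no step poses a serious obstacle; the only points needing care are verifying in (iii) that the limit functional dominates (respectively is dominated by) every member of the net, and the routine confirmation that the pointwise limit is linear and bounded. I would deliberately avoid the more combinatorial route of manipulating mixing operators $T,S\in\MM(A^*)$ so as to recover $\phi$ from a general $\psi\in D_A(\phi)$, since controlling $S\psi$ simultaneously at many test elements is awkward, whereas reducing to the single element $a=1$ and appealing to Theorem \ref{normclosed} bypasses this difficulty entirely.
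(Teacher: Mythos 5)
Your proposal is correct, and for parts (i) and (ii) it takes a genuinely different route from the paper's. The paper proves (i) by contradiction inside the Dixmier set: given $\psi = T\phi$ with $T\in\MM(A^*)$ and supposing $\phi\notin D_A(\psi)$, it produces a Hahn--Banach separating element $a\geq 0$, invokes the hypothesis at \emph{that} element to obtain a maximally mixed $\phi'\leq\phi$ with $\phi'(a)\geq t+\epsilon/2$, and then exploits positivity of mixing operators ($T\phi'\leq T\phi=\psi$, then $S(T\phi')=\phi'$ forces $\phi'\leq S\psi$) to contradict the separation; the hypothesis is thus used at an unknown element supplied by duality. You instead observe that the hypothesis at the single element $a=1$ already suffices: since $\phi-\phi'\geq 0$ and a positive functional on a unital C*-algebra attains its norm at the unit, $\|\phi-\phi'\|=\phi(1)-\phi'(1)<\epsilon$, so $\phi$ lies in the norm closure of the maximally mixed functionals and Theorem \ref{normclosed} finishes. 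This is shorter, and it shows the stated hypothesis can be weakened to approximation at the unit alone; the trade-off is that the real work is delegated to Theorem \ref{normclosed}, whose proof consists of exactly the kind of mixing-operator bookkeeping (contractivity of $T$ and $S$, $ST\tilde\phi=\tilde\phi$) that the paper's direct argument performs here, so the total content is comparable, but your factorization through an already-established result is cleaner. For part (iii) your argument is essentially the paper's: monotone convergence of $(\phi_i(a))_i$ for $a\in A_+$, extension by linearity to a weak* limit $\phi\in A_+^*$ dominating (resp.\ dominated by) every $\phi_i$, then (i) or (ii) applied to $\phi$; your closing remark that the convergence is in fact in norm (again by evaluating the positive differences at $1$) is a correct small strengthening not stated in the paper.
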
	
\begin{proof}
	(i) Let $\psi\in D_A(\phi)$ and suppose that $\psi=T\phi$, where  $T\in \MM(A^*)$. Suppose, towards a contradiction, that $\phi\notin D_A(\psi)$. Then by the Hahn-Banach theorem there exist $a\in A_{\sa}$, $t\in \R$ and $\epsilon>0$ such that $\rho(a)\leq t$ for all $\rho\in D_A(\psi)$ but $\phi(a)\geq t+\epsilon$. Replacing $a$ by $a+\|a\|1$ and $t$ by $t+\|a\|\|\phi\|$, we may assume that $a\geq0$.
	
	By hypothesis, there exists a maximally mixed functional $\phi'\in A_+^*$  such that $\phi'\leq \phi$ and $\phi'(a)\geq t+\epsilon/2$. Let $\psi'=T\phi'$.
	Note that, since $T$ is positive, $\psi'\leq \psi$.
	Since $\phi'$ is maximally mixed, $\phi'\in D_A(\psi')$.  Thus, there exists $S\in \MM(A^*)$ such that $S\psi'=\phi'$. Let   $\rho=S\psi$.
	Then $\phi'\leq \rho$ and so $\phi'(a)\leq \rho(a)\leq t$ since $a\geq0$. This contradicts the fact that $\phi'(a)\geq t+\epsilon/2$.	Thus $\phi\in D_A(\psi)$
	and hence $D_A(\psi)=D_A(\phi)$.

	(ii) This is similar to (i).

	(iii)  The convergence of the net follows from weak$^*$-compactness, monotonicity and the fact that $A$ is the linear span of $A_+$. The limit is maximally mixed by (i) and (ii).
\end{proof}

Next we prepare to examine the relation of the maximally mixed functionals of $A$ with those of its ideals and quotients.
Theorem \ref{MMextensions} will tell us that, given an ideal $J$ of $A$, maximal mixedness of a functional can be read off by its decomposition with respect to $A/J$ and $J$.
Part (i) of the following proposition is a classical key result used to prove permanence of the Dixmier property under suitable extensions; we use part (ii) in an analogous way to handle Dixmier sets of functionals.

\begin{proposition}\label{eqn: exp-unitary for A}
	Let $A$ be a C*-algebra, let $a\in A$ and let $\phi\in A^*$. The following are true:
	
	(i) $D_A(a)$ is equal to the  norm-closure of $\mathrm{co}\{e^{ih}ae^{-ih}: h\in A_{\sa}\}$.

	(ii) $D_A(\phi)$ is equal to the weak* closure  of $\mathrm{co}\{e^{ih}\phi e^{-ih}: h\in A_{\sa}\}$.
\end{proposition}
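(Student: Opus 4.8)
Write $E_A(a):=\overline{\mathrm{co}\{e^{ih}ae^{-ih}:h\in A_\sa\}}^{\,\|\cdot\|}$ and $E_A(\phi):=\overline{\mathrm{co}\{e^{ih}\phi e^{-ih}:h\in A_\sa\}}^{\,\mathrm{weak}^*}$. Since each $e^{ih}$ with $h\in A_\sa$ is a unitary in $A^\sim$, and both $D_A(a)$ and $D_A(\phi)$ are convex, closed (in norm resp.\ weak$^*$) and invariant under conjugation by $U(A^\sim)$, the inclusions $E_A(a)\subseteq D_A(a)$ and $E_A(\phi)\subseteq D_A(\phi)$ are immediate; the plan is to prove the reverse inclusions. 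For (i) I would first note that the norm-closed convex hull $\overline{\mathrm{co}}\{uau^*:u\in U(A^\sim)\}$ of the \emph{full} unitary orbit is itself convex, norm-closed, contains $a$, and is $U(A^\sim)$-invariant (conjugating by a fixed $v$ merely reparametrises the orbit via $u\mapsto vu$); hence it equals $D_A(a)$. It therefore suffices to show that every single orbit element $uau^*$ lies in $E_A(a)$, for then $D_A(a)=\overline{\mathrm{co}}\{uau^*\}\subseteq E_A(a)$.

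The crux is thus to show, for an arbitrary $u\in U(A^\sim)$, that $uau^*\in E_A(a)$. \emph{This is the main obstacle:} a general unitary need not be of the form $e^{ih}$ with $h\in A_\sa$, because of a winding ($K_1$) obstruction (think of the canonical unitary in $C(\T)$, or $\mathrm{diag}(z,1)$ acting by conjugation in $M_2(C(\T))$), and such $u$ cannot even be \emph{norm}-approximated by exponentials. I would beat this by passing to the enveloping von Neumann algebra $A^{**}$, whose unitary group is connected: there $u=e^{iH}$ for some $H\in(A^{**})_\sa$ (take a Borel branch of $\tfrac{1}{i}\log$ on the spectrum of $u$). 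By Kaplansky's density theorem choose a net $(h_\lambda)$ in $A_\sa$, bounded by $\|H\|$, with $h_\lambda\to H$ in the strong$^*$ topology. As the continuous functional calculus and multiplication are strong$^*$-continuous on norm-bounded sets, $e^{ih_\lambda}\to e^{iH}=u$ strong$^*$, whence $e^{ih_\lambda}ae^{-ih_\lambda}\to uau^*$ strong$^*$, and in particular weakly, i.e.\ in $\sigma(A,A^*)$ after restricting to $A$. Thus $uau^*$ lies in the weak closure of the convex set $\mathrm{co}\{e^{ih}ae^{-ih}:h\in A_\sa\}\subseteq A$; by Mazur's theorem the weak and norm closures of a convex set coincide, so $uau^*\in E_A(a)$, proving (i). (Scalar unitaries in $U(A^\sim)$ act trivially by conjugation, so in the non-unital case it suffices to treat $u\in 1+A$, for which the chosen $H=\tfrac1i\log u$ lies in the $\sigma$-weak closure of $A$ and so is approximable from $A_\sa$.)

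For (ii) I would avoid repeating the bidual construction and instead deduce it from (i) by separation. Exactly as above, $D_A(\phi)=\overline{\mathrm{co}}^{\,\mathrm{weak}^*}\{u\phi u^*:u\in U(A^\sim)\}$, so it is enough to show $u\phi u^*\in E_A(\phi)$ for each $u$. Suppose not. The set $E_A(\phi)$ is weak$^*$-compact (weak$^*$-closed and bounded by $\|\phi\|$) and convex, and $\{u\phi u^*\}$ is a single point, so by the Hahn--Banach separation recalled in Section \ref{Prelims} (elements of $A$ separate disjoint weak$^*$-compact convex subsets of $A^*$) there are $a\in A$ and $t\in\R$ with $\mathrm{Re}\,\rho(a)\leq t$ for all $\rho\in E_A(\phi)$ while $\mathrm{Re}\,(u\phi u^*)(a)>t$. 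Taking $\rho=e^{ih}\phi e^{-ih}$ and using $(e^{ih}\phi e^{-ih})(a)=\phi(e^{ih}ae^{-ih})$ gives $\mathrm{Re}\,\phi(e^{ih}ae^{-ih})\leq t$ for every $h\in A_\sa$. Since $b\mapsto\mathrm{Re}\,\phi(b)$ is norm-continuous and affine, the half-space $\{b:\mathrm{Re}\,\phi(b)\leq t\}$ is norm-closed and convex and contains all $e^{ih}ae^{-ih}$, hence contains $E_A(a)=D_A(a)$ by part (i); in particular it contains $uau^*$. Therefore $\mathrm{Re}\,(u\phi u^*)(a)=\mathrm{Re}\,\phi(uau^*)\leq t$, contradicting the strict inequality. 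Hence $u\phi u^*\in E_A(\phi)$, and $D_A(\phi)=E_A(\phi)$, as required.
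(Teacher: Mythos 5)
Your proof is correct, and it splits into two parts of different character when compared with the paper. Part (ii) follows the paper's route exactly: a Hahn--Banach separation in $(A^*,\mathrm{weak}^*)$, whose dual space is $A$, reduces the statement about functionals to part (i), and your argument there matches the paper's almost line by line (including the reduction to the fact that $D_A(\phi)$ is the weak$^*$ closed convex hull of the unitary orbit of $\phi$). Part (i) is where you genuinely diverge: the paper does not prove it from scratch, but cites \cite[Proposition 2.4]{Arch-PLMS} for the unital case and handles non-unital $A$ by applying that result to $A^\sim$ together with the identity $e^{i(h+t1)}=e^{it}e^{ih}$, which shows that exponentials over $(A^\sim)_{\sa}$ produce no conjugates beyond those coming from $A_{\sa}$. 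You instead give a self-contained argument: reduce to showing that each single conjugate $uau^*$ lies in the norm-closed convex hull of the exponential conjugates, write $u=e^{iH}$ with $H$ self-adjoint in the bidual, approximate $H$ in the strong$^*$ topology by a bounded net in $A_{\sa}$ via Kaplansky density, use strong$^*$ continuity of the exponential and of multiplication on bounded sets to get $e^{ih_\lambda}ae^{-ih_\lambda}\to uau^*$ weakly, and then invoke Mazur's theorem to pass from the weak closure of the convex hull back to the norm closure. This is in substance the standard proof of the result the paper cites, so your route amounts to a reproof of it; what it buys is transparency and self-containedness -- it makes explicit why the $K_1$-obstruction you correctly identify is harmless (it disappears in $A^{**}$, and convexity upgrades weak approximation to norm approximation) -- at the cost of deploying bidual machinery that the citation conceals. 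Your parenthetical treatment of the non-unital case is correct but terse; the cleanest justification is that $(A^\sim)^{**}\cong A^{**}\oplus\C$, a unitary $u\in 1+A$ decomposes there as $u_1\oplus 1$ with $u_1$ a unitary in $A^{**}$, and the Borel logarithm chosen so that $\log 1=0$ therefore lies in $A^{**}$, which is exactly where Kaplansky density lets you approximate from $A_{\sa}$.
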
	
\begin{proof}
	(i) For unital $A$, the result is given in \cite[Proposition 2.4]{Arch-PLMS}. For non-unital $A$, we apply this result to $A^\sim$ and use the fact that if $h\in A_{\sa}$ and $t\in \R$ then $e^{i(h+t1)}=e^{it}e^{ih}$.
	
	(ii) This follows from (i) and the Hahn-Banach theorem. Indeed, if (ii) fails to hold then there is a unitary conjugate of $\phi$ which does not belong to the weak* closure  of $\text{co}\{e^{ih}\phi e^{-ih}: h\in A_{\sa}\}$. Since $A^*$ with the weak$^*$-topology has dual space $A$,  it follows  by the Hahn-Banach separation theorem that there exists $u\in U(A^\sim)$, $a\in A$ and $t\in \R$ such that $\mathrm{Re}(\phi(uau^*))>t$ and $\mathrm{Re}(\phi(e^{ih}ae^{-ih}))\leq t$ for all $h\in A_{\sa}$. It follows from the last inequality and part (i) that $\mathrm{Re}(\phi(x))\leq t$ for all $x\in D_A(a)$. This contradicts the fact that $\mathrm{Re}(\phi(uau^*))>t$.
\end{proof}

\begin{proposition}\label{Dextensions}
	Let $J$ be a proper, closed two-sided  ideal of a  unital C*-algebra $A$. 	Let $\iota_J\colon J \to A$ and $q_J\colon A\to A/J$ denote the inclusion and  quotient maps.
	\begin{enumerate}
		\item	
		The adjoint map $\iota_J^*\colon A^*\to J^*$ maps  $D_A(\phi)$ onto
		$D_J(\phi|_J)$ for all $\phi\in A^*_+$.
		
		\item
We have
$D_A(\phi)=D_A(\phi,U(J+\C1))$	 for all  $\phi\in A^*_+$ such that $\|\phi\|=\|\phi|_J\|$.
		
		\item
		The adjoint map $q_J^*\colon (A/J)^*\to A^*$ maps
		$D_{A/J}(\phi)$ bijectively to $D_A(\phi\circ q_J)$ for all $\phi\in (A/J)^*_+$.
	\end{enumerate}
	
\end{proposition}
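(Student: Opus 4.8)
The three parts all reduce, via the exponential description of Dixmier sets in Proposition \ref{eqn: exp-unitary for A}(ii), to understanding the single exponential conjugates $e^{ig}\phi e^{-ig}$ ($g\in A_{\sa}$); and since unitary conjugates, convex combinations and weak* limits preserve positivity, every set in sight consists of positive functionals. The engine for parts (i) and (ii) will be the GNS representation of $\phi$ together with the Kaplansky density theorem applied inside the ideal, and I expect part (ii) to be the main obstacle. I would treat (iii) first, as it is essentially formal.

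Part (iii). Choose for each $k\in(A/J)_{\sa}$ a self-adjoint lift $h\in A_{\sa}$ with $q_J(h)=k$, so $q_J(e^{ih})=e^{ik}$. A direct computation gives $q_J^*(e^{ik}\phi e^{-ik})=e^{ih}(\phi\circ q_J)e^{-ih}$. Since $q_J^*$ is weak* continuous, linear and injective (because $q_J$ is surjective), applying it to the description of $D_{A/J}(\phi)$ from Proposition \ref{eqn: exp-unitary for A}(ii) identifies $q_J^*(D_{A/J}(\phi))$ with the weak* closed convex hull of $\{e^{ih}(\phi\circ q_J)e^{-ih}:q_J(h)\in(A/J)_{\sa}\}$, which is contained in $D_A(\phi\circ q_J)$. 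For the opposite inclusion I would note that $\phi\circ q_J$ vanishes on $J$, hence so does each $e^{ih}(\phi\circ q_J)e^{-ih}$ (as $e^{-ih}Je^{ih}=J$); therefore every such conjugate lies in the range $J^\perp$ of $q_J^*$ and equals $q_J^*(e^{iq_J(h)}\phi e^{-iq_J(h)})$, which is in $q_J^*(D_{A/J}(\phi))$. Weak* compactness and injectivity of $q_J^*$ then yield the claimed bijection.

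Part (ii). The inclusion $D_A(\phi,U(J+\C1))\subseteq D_A(\phi)$ is automatic. For the reverse inclusion it suffices to place each $e^{ig}\phi e^{-ig}$ in $D_A(\phi,U(J+\C1))$. Let $(\pi,H,\xi)$ be the GNS triple of $\phi$ and $p$ the projection onto $\overline{\pi(J)H}$; since $J$ is an ideal, $p\in\pi(A)'$. A positive approximate unit of $J$ converges strongly to $p$, so $\|\phi|_J\|=\|p\xi\|^2$ while $\|\phi\|=\phi(1)=\|\xi\|^2$, and the hypothesis $\|\phi\|=\|\phi|_J\|$ forces $\xi=p\xi$. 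On $pH$ the representation $\pi|_J$ is nondegenerate, and its canonical extension to $M(J)\supseteq A$ shows $\pi(a)|_{pH}\in(\pi(J)|_{pH})''$ for all $a\in A$; in particular $\pi(g)|_{pH}$ is a self-adjoint element of this von Neumann algebra. By Kaplansky's density theorem there are $h_\alpha\in(J+\C1)_{\sa}$ with $\pi(h_\alpha)|_{pH}$ bounded and strongly convergent to $\pi(g)|_{pH}$; then $v_\alpha:=e^{ih_\alpha}\in U(J+\C1)$ and, by strong continuity of the exponential together with $\xi=p\xi$, $\pi(v_\alpha)\xi\to\pi(e^{ig})\xi$ in norm. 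Consequently $(v_\alpha\phi v_\alpha^*)(a)=\langle\pi(a)\pi(v_\alpha)\xi,\pi(v_\alpha)\xi\rangle\to(e^{ig}\phi e^{-ig})(a)$ for every $a\in A$, so $v_\alpha\phi v_\alpha^*\to e^{ig}\phi e^{-ig}$ weak*, and the limit lies in the weak* closed set $D_A(\phi,U(J+\C1))$. The crux, and the step I expect to require the most care, is exactly this paragraph: identifying $\pi(A)|_{pH}$ inside $(\pi(J)|_{pH})''$ so that Kaplansky operates within the ideal, and using $\xi=p\xi$ (the only place the norm hypothesis enters) to upgrade strong convergence of $\pi(h_\alpha)$ to norm convergence of $\pi(v_\alpha)\xi$.

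Part (i). The inclusion $D_J(\phi|_J)\subseteq\iota_J^*(D_A(\phi))$ is the easy direction: for $h\in J_{\sa}$ we have $e^{ih}\in J+\C1\subseteq A$ and $\iota_J^*(e^{ih}\phi e^{-ih})=e^{ih}(\phi|_J)e^{-ih}$, so the weak* compact convex set $\iota_J^*(D_A(\phi))$ contains every generating conjugate of $D_J(\phi|_J)$, hence all of it. For the reverse inclusion I would rerun the GNS/Kaplansky argument of (ii), now observing that \emph{no} norm hypothesis is needed: for $x\in J$ one has $\pi(x)=p\pi(x)p$, so $(e^{ig}\phi e^{-ig})(x)=\langle\pi(x)\pi(e^{ig})p\xi,\pi(e^{ig})p\xi\rangle$ depends only on the vector $\pi(e^{ig})p\xi\in pH$. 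Approximating $\pi(e^{ig})$ strongly on $pH$ by $\pi(v_\alpha)$ with $v_\alpha\in U(J+\C1)$ exactly as above produces $v_\alpha(\phi|_J)v_\alpha^*\to(e^{ig}\phi e^{-ig})|_J$ weak* in $J^*$. Since $D_J(\phi|_J)$ is weak* closed and $\iota_J^*(D_A(\phi))$ is the weak* closed convex hull of the functionals $\iota_J^*(e^{ig}\phi e^{-ig})$, we conclude $\iota_J^*(D_A(\phi))\subseteq D_J(\phi|_J)$, and hence equality.
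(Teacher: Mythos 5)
Your proof is correct, but it follows a genuinely different route from the paper's for parts (i) and (ii); part (iii) is essentially the paper's argument (lift $k\in(A/J)_{\sa}$ to $h\in A_{\sa}$ and apply Proposition \ref{eqn: exp-unitary for A}(ii)). The paper proves (i) first, by duality: the inclusion $\iota_J^*(D_A(\phi))\subseteq D_J(\phi|_J)$ comes from Hahn--Banach separation, Lemma \ref{DphiphiD}, and the element-level identity $D_J(a)=D_A(a)$ for $a\in J$ quoted from \cite{Arch-PLMS}, while surjectivity comes from restricting $U(J+\C1)$-mixing operators to $J$; part (ii) is then deduced from (i) via the uniqueness of norm-preserving positive extensions. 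You instead prove (ii) directly and spatially: in the GNS representation $(\pi,H,\xi)$ of $\phi$, with $p\in\pi(A)'$ the projection onto $\overline{\pi(J)H}$, you note $\pi(A)|_{pH}\subseteq(\pi(J)|_{pH})''$ and use Kaplansky density to approximate $\pi(e^{ig})|_{pH}$ ($g\in A_{\sa}$) strongly by $\pi(v_\alpha)|_{pH}$ with $v_\alpha\in U(J+\C1)$, the hypothesis $\|\phi\|=\|\phi|_J\|$ entering only through $\xi=p\xi$; the same machinery then yields the hard inclusion of (i). What each approach buys: the paper's argument is soft and short, but it outsources the real work to the cited fact $D_J(a)=D_A(a)$ and obtains (ii) only as a corollary of (i); yours is self-contained modulo standard von Neumann algebra theory, makes the role of the norm hypothesis completely transparent, and proves something formally sharper in (ii), namely that each single conjugate $e^{ig}\phi e^{-ig}$ is a weak* limit of conjugates $v_\alpha\phi v_\alpha^*$ with $v_\alpha\in U(J+\C1)$, not merely an element of the weak* closed convex hull of such. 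One small correction: $A$ need not embed in $M(J)$ (the canonical map $A\to M(J)$ kills the annihilator of $J$), so writing ``$M(J)\supseteq A$'' is inaccurate; the fact you actually need, $\pi(a)|_{pH}\in(\pi(J)|_{pH})''$, is nevertheless valid because $\pi(ae_\lambda)|_{pH}\to\pi(a)|_{pH}$ strongly for an approximate unit $(e_\lambda)$ of $J$ and $ae_\lambda\in J$.
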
	
\begin{proof}
If the ideal $J$ is a unital C*-algebra then $A\cong J\oplus A/J$ and all three results (i)-(iii) have a straightforward  proof.
We thus assume that $J$ is non-unital. Note then that $J+\C1$ may be regarded as the unitization of $J$.
 	
	(i) Let us first show that $\rho\stackrel{\iota_J^*}{\longmapsto}\rho|_J$ maps $D_A(\phi)$ into $D_J(\phi|_J)$.
	Let  $\psi\in D_A(\phi)$ and suppose that
	$\psi|_J\notin D_J(\phi|_J)$.  Then, by the Hahn-Banach theorem,  there exist $a\in J_{\sa}$ and $t\in\R$ such that
	$\psi(a)>t$ and $\rho(a)\leq t$ for all $\rho\in D_J(\phi|_J)$. It follows from Lemma \ref{DphiphiD} applied to
	$\phi|_J$ and $a$ that  $\phi(b)\leq t$ for all $b\in D_J(a)$. But, by \cite[Remark 2.6]{Arch-PLMS}, $D_J(a)=D_A(a)$ (since
	$a\in J$). 	Hence $\phi(b)\leq t$ for all $b\in D_A(a)$. Lemma \ref{DphiphiD}, applied now to $\phi$ and $a$, implies that $\rho(a)\leq t$ for all $\rho\in D_A(\phi)$.
	Since $\psi\in D_A(\phi)$, we obtain that $\psi(a)\leq t$ which gives a contradiction. Thus $\iota_J^*$ maps $D_A(\phi)$ into $D_J(\phi|_J)$.
	
	Let us prove surjectivity. Since $\iota_J^*$ is weak$^*$-continuous,  the image of $D_A(\phi)$ is a weak$^*$-compact convex subset of
	$D_J(\phi|_J)$.
	For  every  $T\in \M(A,U(J+\C1))$ we have $(\phi\circ T)|_J = \phi|_J\circ T|_J$.
Clearly, every mixing operator in $\M(J)$ has the form $T|_J$ for some $T\in \M(A,U(J+\C1))$. Thus, letting $T$ range through $\M(A,U(J+\C1))$	the functionals $\phi|_J\circ T|_J$ range through a dense subset of $D_J(\phi|_J)$.
	This shows that the image of $D_A(\phi)$ by $\iota_J^*$  is also dense  in $D_J(\phi|_J)$.

	(ii) Clearly $D_A(\phi,U(J+\C1))\subseteq D_A(\phi)$. To prove the opposite inclusion it suffices to show that $u\phi u^*\in D_A(\phi, U(J+\C1))$ for all $u\in U(A)$.
	Let $u\in U(A)$   and set  $\psi=u\phi u^*$. By (i), $\psi|_J\in D_J(\phi|_J)$, so there exists a net of mixing operators
	$(T_i)_i$ in $\M(A,U(J+\C1))$ such that
	\[
	(\phi \circ T_i)|_J=(\phi|_J) \circ (T_i|_J)\stackrel{\mathrm{weak}^*}{\longrightarrow}\psi|_J.
	\]
	Passing to a subnet if necessary, we may assume
	that $\phi\circ T_i\to \psi'\in D_A(\phi,U(J+\C1))$.
	Then $\psi'|_J=\psi|_J$. Moreover, $\|\psi'\|\leq \|\phi\|=\|\phi|_J\|=\|\psi|_J\|$.
	By the uniqueness of the norm-preserving positive extension of a positive functional, we get that $\psi'=\psi$. Thus, $\psi\in  D_A(\phi,U(J+\C1))$.
	
(iii) The image of $D_{A/J}(\phi)$ by $q_J^*$ is the set  $\{\rho\circ q_J: \rho\in D_{A/J}(\phi)\}$. This set is convex, weak* compact, and contains $\phi\circ q_J$. Moreover,
	for $u\in U(A)$ and $\rho\in D_{A/J}(\phi)$ we have $u(\rho\circ q_J)u^*=(v\rho v^*)\circ q_J$, where
$v=q_J(u)\in  U(A/J)$.
	Hence $\{\rho\circ q_J: \rho\in D_{A/J}(\phi)\}$ is invariant under unitary conjugations. It follows that
	\[
	D_A(\phi\circ q_J)\subseteq\{\rho\circ q_J: \rho\in D_{A/J}(\phi)\}.
	\]
	To prove the reverse inclusion it suffices to show that the left side is dense in the right side (since the left side is weak* compact).
	By Proposition \ref{eqn: exp-unitary for A} (ii) (applied in $A/J$), it suffices to show that $e^{ik}\phi e^{-ik}\circ q_J$ belongs to $D_A(\phi\circ q_J)$ for all
	$k\in (A/J)_{\sa}$. But   if $k\in (A/J)_{\sa}$
	then we may find $h\in A_{\sa}$ such that $q_J(h)=k$,  from which it follows that
	\[
	(e^{ik}\phi e^{-ik})\circ q_J = e^{ih}(\phi\circ q_J)e^{-ih}\in D_A(\phi\circ q_J),
	\]
	as desired.
	
	We have  shown that $q_J^*$ maps $D_{A/J}(\phi)$ onto $D_A(\phi\circ q_J)$. Since $q_J^*$ is also injective, the result follows.
\end{proof}

Let $J\subseteq A$ be as above a proper closed two-sided ideal of $A$. Let $(A^*_+)^J$ denote the set of functionals
$\phi\in A^*_+$ such that $\|\phi\|=\|\phi|_J\|$. Let $(A^*_+)_J$ denote the functionals $\phi\in A^*_+$ such that
$\phi(J)=\{0\}$. Recall then that every $\phi\in A^*_+$ can be expressed in the form $\phi=\phi_1+\phi_2$,
with $\phi_1\in (A^*_+)^J$ and $\phi_2\in (A^*_+)_J$ and that this decomposition is unique (see, for example, \cite[2.11.7]{Dix}).

\begin{theorem}\label{MMextensions}
	Let	$A$ be a unital C*-algebra and let $J$ be a proper closed ideal of $A$. Let $\phi\in A_+^*$ and write $\phi=\phi_1+\phi_2$, where $\phi_1,\phi_2\in A_+^*$  are such that $\phi_1\in (A_+^*)^J$
	and $\phi_2\in (A_+^*)_J$.
	\begin{enumerate}
		\item
		$\phi_1$ is maximally mixed if and only if $\phi_1|_J\in J_+^*$ is maximally mixed.
		\item
		$\phi_2$ is maximally mixed if and only if the functional that it induces on $A/J$ is maximally mixed.
		
		\item
		$\phi$ is maximally mixed if and only if both $\phi_1$ and $\phi_2$ are maximally mixed. Moreover, in this case $D_A(\phi)=D_A(\phi_1)+D_A(\phi_2)$.
	\end{enumerate}	
\end{theorem}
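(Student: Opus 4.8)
The plan is to treat parts (i) and (ii) as \emph{transfer} results across the bijections of Dixmier sets supplied by Proposition \ref{Dextensions}, and then to bootstrap (iii) from them together with two reachability observations. Throughout I would lean on three elementary facts that follow from the setup already in place. First, since $A$ is unital, a positive functional $\rho$ satisfies $\|\rho\|=\rho(1)$, and every $T\in\MM(A^*)$ preserves the value at $1$ (by weak* continuity and $u1u^*=1$); hence every functional in $D_A(\rho)$ is positive with the same norm as $\rho$. Second, mixing preserves both classes of the decomposition: if $\rho\in(A_+^*)_J$ then $T\rho\in(A_+^*)_J$ (because $u^*au\in J$ whenever $a\in J$), and if $\rho\in(A_+^*)^J$ then $T\rho\in(A_+^*)^J$ (combine the first fact, Proposition \ref{Dextensions}(i), and the preservation of the norm of positive functionals on $J$ under mixing). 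Third, I would repeatedly use the uniqueness of the decomposition $\rho=\rho_1+\rho_2$ recalled before the theorem.

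For (i), the map $\iota_J^*\colon D_A(\phi_1)\to D_J(\phi_1|_J)$ is onto by Proposition \ref{Dextensions}(i); it is also injective, because every $\psi\in D_A(\phi_1)$ lies in $(A_+^*)^J$ and is therefore the unique norm-preserving positive extension of $\psi|_J$. Thus $\iota_J^*$ is a bijection on $D_A(\phi_1)$, and, by applying Proposition \ref{Dextensions}(i) to each $\psi\in D_A(\phi_1)$, it carries $D_A(\psi)$ onto $D_J(\psi|_J)$. Maximal mixedness of $\phi_1$ reads $D_A(\psi)=D_A(\phi_1)$ for all $\psi\in D_A(\phi_1)$; pushing this through the bijection turns it into $D_J(\psi|_J)=D_J(\phi_1|_J)$, i.e.\ maximal mixedness of $\phi_1|_J$. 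Part (ii) is the same argument run through the bijection $q_J^*\colon D_{A/J}(\bar\phi_2)\to D_A(\phi_2)$ of Proposition \ref{Dextensions}(iii), where $\bar\phi_2\in(A/J)_+^*$ is the functional induced by $\phi_2=\bar\phi_2\circ q_J$.

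For (iii) I would first record the general inclusion $D_A(\phi)\subseteq D_A(\phi_1)+D_A(\phi_2)$, which holds because the right-hand side is a Dixmier set (convex, weak* compact as a sum of weak* compact sets, conjugation invariant) containing $\phi$; by uniqueness of the decomposition and the second preliminary fact, the canonical parts of any $\psi\in D_A(\phi)$ are exactly its two summands, lying in $D_A(\phi_1)$ and $D_A(\phi_2)$. Next come two reachability statements. For $\psi_1\in D_A(\phi_1)$ one has $\psi_1+\phi_2\in D_A(\phi)$: since $D_A(\phi_1)=D_A(\phi_1,U(J+\C1))$ by Proposition \ref{Dextensions}(ii) and every $u\in U(J+\C1)$ fixes $\phi_2$ (as $u^*au-a\in J$ for all $a\in A$), conjugating $\phi$ by such unitaries mixes $\phi_1$ while leaving $\phi_2$ untouched. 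For $\psi_2\in D_A(\phi_2)$, write $\psi_2=\sigma\circ q_J$ with $\sigma\in D_{A/J}(\bar\phi_2)$, approximate $\sigma$ by exponential mixings $e^{ik}\bar\phi_2e^{-ik}$ (Proposition \ref{eqn: exp-unitary for A}(ii)), lift $k=q_J(h)$ and apply $\mathrm{Ad}(e^{ih})$ to $\phi$; passing to a convergent subnet of the resulting $\phi_1$-components yields some $\psi_1^*\in D_A(\phi_1)$ with $\psi_1^*+\psi_2\in D_A(\phi)$. With these, if $\phi$ is maximally mixed then for $\psi_1\in D_A(\phi_1)$ reachability and maximal mixedness give $\phi\in D_A(\psi_1+\phi_2)\subseteq D_A(\psi_1)+D_A(\phi_2)$, and matching canonical parts forces $\phi_1\in D_A(\psi_1)$, which via part (i) says $D_J(\phi_1|_J)=D_J(\psi_1|_J)$; so $\phi_1|_J$, hence $\phi_1$, is maximally mixed, and symmetrically $\phi_2$ is, using the second reachability statement and part (ii). Conversely, if $\phi_1,\phi_2$ are maximally mixed I would prove $D_A(\phi)=D_A(\phi_1)+D_A(\phi_2)$: for $\supseteq$, take $\psi_1^*+\psi_2\in D_A(\phi)$ from reachability and correct the first coordinate by $U(J+\C1)$-mixings (which fix $\psi_2$), moving $\psi_1^*$ to any prescribed $\psi_1$, which is possible exactly because $\psi_1\in D_A(\phi_1)=D_A(\psi_1^*)=D_A(\psi_1^*,U(J+\C1))$. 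Finally $\phi$ is maximally mixed, since every $\psi=\psi_1+\psi_2\in D_A(\phi)$ has maximally mixed parts, whence the sum formula applied to $\psi$ returns $D_A(\psi)=D_A(\phi_1)+D_A(\phi_2)=D_A(\phi)$.

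The main obstacle is the reverse inclusion in the displayed formula, namely \emph{decoupling} the mixing of the two summands: a single mixing operator acts on $\phi_1$ and $\phi_2$ at once, so reaching an arbitrary pair $(\psi_1,\psi_2)$ requires both the observation that conjugation by $U(J+\C1)$ leaves the $J$-null part fixed and the use of maximal mixedness to repair any over-mixing of the first coordinate (without maximal mixedness one cannot return from $\psi_1^*$ to $\psi_1$, and the formula can fail). A secondary point to handle carefully is that unitaries of $A/J$ need not lift to $A$, which is why the quotient reachability step must be carried out through exponential unitaries rather than general ones.
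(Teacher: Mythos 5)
Your overall architecture matches the paper's---parts (i) and (ii) as transfer along Proposition \ref{Dextensions}, and part (iii) via decoupling the two summands using $U(J+\C1)$-mixings (which fix functionals vanishing on $J$) together with exponential lifting from $A/J$---but there is a genuine gap at the foundation: your ``second elementary fact,'' that every $T\in\MM(A^*)$ maps $(A_+^*)^J$ into $(A_+^*)^J$, is false. Finite mixing operators do preserve $\|\rho|_J\|$ for positive $\rho$ (a unitary conjugate of an approximate unit of $J$ is again one), but elements of $\MM(A^*)$ are point-weak* \emph{limits} of such operators, and the function $\rho\mapsto\|\rho|_J\|$ is only weak* lower semicontinuous on $A_+^*$: it can strictly drop in the limit. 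Concretely, take $A=B(H)$, $J=K(H)$, and $\rho=\omega_\xi$ a vector state, so $\rho\in(A_+^*)^J$; conjugating by unitaries $u_n$ with $u_n^*\xi=e_n$ running through an orthonormal sequence and passing to a point-weak* convergent subnet yields $T\in\MM(A^*)$ with $T\rho$ a singular state, i.e.\ $T\rho\in(A_+^*)_J\setminus\{0\}$. In particular, $\iota_J^*$ is \emph{not} injective on $D_A(\omega_\xi)$, contrary to your bijectivity claim in (i): $D_A(\omega_\xi)$ contains many distinct singular states, all restricting to $0$ on $K(H)$.

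This false principle is invoked at exactly the delicate points: the injectivity of $\iota_J^*$ in the converse direction of (i), the claim that ``the canonical parts of any $\psi\in D_A(\phi)$ are exactly its two summands,'' and the ``matching canonical parts'' steps in (iii). The correct statement---and the real content of the theorem---is that the two classes are preserved along Dixmier sets only \emph{when maximal mixedness is available}, and it must be derived from that hypothesis. This is how the paper proceeds: if $\phi$ is maximally mixed and $T\in\MM(A^*)$, then $\phi\in D_A(T\phi)$, hence $\phi|_J\in D_J((T\phi)|_J)$ by Proposition \ref{Dextensions}(i), giving $\|\phi_1\|=\|\phi|_J\|\leq\|(T\phi_1)|_J\|\leq\|T\phi_1\|\leq\|\phi_1\|$ and so $T\phi_1\in(A_+^*)^J$. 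Similar norm-counting plus uniqueness of the norm-preserving positive extension repairs each of your uses; for instance, in the forward direction of (iii), if $\phi=\rho_1+\rho_2$ with $\rho_1\in D_A(\phi_1)$ and $\rho_2\in D_A(\phi_2)\subseteq(A_+^*)_J$, then $\rho_1|_J=\phi|_J=\phi_1|_J$ and $\|\rho_1\|=\|\phi_1\|$ (by additivity of norms of positive functionals and contractivity of mixing), whence $\rho_1=\phi_1$ and $\rho_2=\phi_2$. So your reachability/decoupling scheme is sound and is essentially the paper's, but as written the proof rests on a false general lemma; each appeal to it has to be replaced by an argument that uses the maximal mixedness hypothesis actually in force at that point.
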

\begin{proof}
	(i) Suppose first that $\phi_1$ is maximally mixed. Let $\psi'\in D_J(\phi_1|_J)$.  By Proposition \ref{Dextensions} (i), there exists
	$\psi\in D_A(\phi_1)$ such that $\psi|_J=\psi'$. Since $\phi_1$ is maximally mixed, $\phi_1\in D_A(\psi)$.
	Then, again by Proposition \ref{Dextensions} (i), $\phi_1|_J\in D_J(\psi')$. Thus,   $\phi_1|_J$ is maximally mixed.
	
	Let us prove the converse.  Let $\psi\in D_A(\phi_1)$. Then $\psi|_J\in D_J(\phi_1|_J)$ by Proposition \ref{Dextensions} (i). Since $\phi_1|_J$
	is maximally mixed,  $\phi_1|_J\in D_J(\psi|_J)$. By Proposition \ref{Dextensions} (i), there exists $\phi_1'\in D_A(\psi)$ such that $\phi_1'|_J=\phi_1|_J$.
	Moreover, $\|\phi_1'\|\leq \|\psi\|\leq \|\phi_1\|$. By the uniqueness of the norm-preserving extension of a positive functional, $\phi_1'=\phi_1$. So $\phi_1\in D_A(\psi)$, as desired.
	
	(ii) This is a rather straightforward consequence of Proposition \ref{Dextensions} (iii). Let $\tilde \phi\in (A/J)^*$ be such that $\phi=\tilde\phi\circ q_J$.
	Suppose that $\tilde\phi$ is maximally mixed. By Proposition \ref{Dextensions} (iii), if $\psi\in D_A(\phi)$ then
	$\psi=\tilde \psi\circ q_J$ for some $\tilde\psi\in D_{A/J}(\tilde \phi)$. Since $\tilde \phi$ is maximally mixed,
	$\tilde \phi\in D_{A/J}(\tilde \psi)$. Again by Proposition \ref{Dextensions} (iii), $\phi\in D_A(\psi)$ as desired. Suppose on the other hand that $\phi$
	is maximally mixed. Let  $\tilde \psi\in D_{A/J}(\tilde \phi)$. Then $\tilde\psi\circ q_J\in D_A(\phi)$. Hence,
	$\phi\in D_A(\tilde\psi \circ q_J)$. By Proposition \ref{Dextensions} (iii), $\tilde \phi\in D_{A/J}(\tilde \psi)$ as desired.
	
	(iii) Suppose that $\phi$ is maximally mixed.
	Let $T\in \MM(A^*)$.  Let us show first that $T\phi_1\in (A^*_+)^J$ and
	$T\phi_2\in (A_+^*)_J$.  It is clear that $T\phi_2\in (A_+^*)_J$, since $\phi_2\in (A_+^*)_J$
	and $(A_+^*)_J$ is a Dixmier set. Thus, restricting to $J$ in $T\phi=T\phi_1+T\phi_2$  we obtain that $(T\phi)|_J=(T\phi_1)|_J$. Since $\phi$ is maximally mixed, $\phi\in D_A(T\phi)$, and therefore $\phi|_J\in D_J((T\phi)|_J)$ by Proposition \ref{Dextensions} (i). Hence,
	\[
	\|\phi_1\|=\|\phi|_J\|\leq \|(T\phi)|_J\|=\|(T\phi_1)|_J\|.
	\]
So $\|T\phi_1\|\leq \|\phi_1\|\leq \|(T\phi_1)|_J\|$, which shows that $T\phi_1\in (A_+^*)^J$ (by the definition of $(A^*_+)^J$).

To prove that $\phi_1$ and $\phi_2$ are maximally mixed we proceed as follows:
	Since $\phi$ is maximally mixed,
	there exists $S\in \MM(A^*)$ such that $ST\phi=\phi$. We thus have that $\phi=ST\phi_1+ST\phi_2$.
	Using the last paragraph with $ST$ in place of $T$, we have that $ST\phi_2\in (A_+^*)_J$  and $ST\phi_1\in (A_+^*)^J$. By the uniqueness of the decomposition of $\phi$
	into a functional in $(A_+^*)^J$ and one in $(A_+^*)_J$ we conclude that
	$ST\phi_1=\phi_1$ and $ST\phi_2=\phi_2$. Thus, for any $T\in \MM(A^*)$
	there exists $S\in \MM(A^*)$ such that $ST\phi_1=\phi_1$ and $ST\phi_2=\phi_2$. In view of Lemma \ref{MMoperators}, this shows that
	$\phi_1$ and $\phi_2$ are maximally mixed.

Suppose now that both $\phi_1$ and $\phi_2$ are maximally mixed. Let us show first that
$D_A(\phi)=D_A(\phi_1)+D_A(\phi_2)$. The inclusion $D_A(\phi)\subseteq D_A(\phi_1)+D_A(\phi_2)$
is clear, for if $T\in \MM(A^*)$ then $T\phi=T\phi_1+T\phi_2$, which belongs to $D_A(\phi_1)+D_A(\phi_2)$, and by Lemma \ref{MMoperators} $T\phi$ ranges through all of $D_A(\phi)$. Let $\phi_1'\in D_A(\phi_1)$ and $\phi_2'\in D_A(\phi_2)$ and let us show that $\phi_1'+\phi_2'\in D_A(\phi)$.
Choose $T\in \MM(A^*)$ such that $T\phi_2=\phi_2'$, so that $T\phi=T\phi_1+\phi_2'$.
 Recall that, as shown above,  operators in $\MM(A^*)$ preserve the decomposition of a maximally mixed functional into
	functionals in $(A_+^*)^J$ and $(A_+^*)_J$. Hence,  $T\phi_1\in (A_+^*)^J$. Since $\phi_1'\in D_A(T\phi_1)$, there exists $S\in \MM(A^*)$
	such that $ST\phi_1=\phi_1'$. Moreover, by Proposition \ref{Dextensions} (ii),
	we can choose $S\in \MM(A^*,U(J+\C1))$. Observe then that $S\phi_2'=\phi_2'$ (since
	$\phi_2'$ vanishes on $J$). Hence, $ST\phi=\phi_1'+\phi_2'$, as desired.
	
Continue to assume that $\phi_1$ and $\phi_2$ are maximally mixed and  let us show that $\phi$ is maximally mixed. Let $\phi'\in D_A(\phi)$. Then $\phi'=\phi_1'+\phi_2'$, where $\phi_1'\in D_A(\phi_1)$ and $\phi_2'\in D_A(\phi_2)$. So
\[
D_A(\phi)=D_A(\phi_1)+D_A(\phi_2)=D_A(\phi_1')+D_A(\phi_2')=D_A(\phi'),
\]	
where we use the fact that $\phi_1'$ and $\phi_2'$ are maximally mixed, and the result of the previous paragraph, for the final equality.
Hence, $\phi$ is maximally mixed.
\end{proof}

\begin{corollary}\label{MMunitization}
	Let $A$ be a non-unital C*-algebra and $\phi\in A_+^*$. Then $\phi$ is maximally mixed if and only if its norm preserving positive extension
	to $A^\sim$   is maximally mixed.
\end{corollary}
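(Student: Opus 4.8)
The plan is to recognize this corollary as a direct application of Theorem \ref{MMextensions}, taking the ideal to be $A$ itself sitting inside its own unitization. Since $A$ is non-unital, $A$ is a proper, closed, two-sided ideal of the unital C*-algebra $A^\sim$, and $A^\sim$ is precisely the unitization $A + \C 1$ of $A$. Thus the hypotheses of Theorem \ref{MMextensions} are met with $A^\sim$ in the role of the ambient unital algebra and $A$ in the role of $J$, and the whole argument reduces to locating the norm-preserving extension $\tilde\phi$ correctly within the decomposition furnished by that theorem.

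The key observation to set up is where $\tilde\phi$ lands. By construction $\tilde\phi|_A = \phi$, and since the extension is norm-preserving we have $\|\tilde\phi\| = \|\phi\| = \|\tilde\phi|_A\|$. Hence $\tilde\phi \in ((A^\sim)^*_+)^{A}$; in other words, in the canonical decomposition $\tilde\phi = \tilde\phi_1 + \tilde\phi_2$ with $\tilde\phi_1 \in ((A^\sim)^*_+)^{A}$ and $\tilde\phi_2 \in ((A^\sim)^*_+)_{A}$, uniqueness of the decomposition forces $\tilde\phi_1 = \tilde\phi$ and $\tilde\phi_2 = 0$. In the notation of Theorem \ref{MMextensions} we may therefore take $\phi_1 = \tilde\phi$ and $\phi_2 = 0$.

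With this in hand, the conclusion is immediate from part (i) of Theorem \ref{MMextensions}: applied with $J = A$ and $\phi_1 = \tilde\phi$, it states that $\tilde\phi$ is maximally mixed in $(A^\sim)^*$ if and only if $\tilde\phi|_A = \phi$ is maximally mixed in $A^*$, which is exactly the asserted equivalence. There is no serious obstacle here, since this is genuinely a corollary; the one point that must be checked with care, and which the norm computation above is designed to secure, is precisely the placement of $\tilde\phi$ in the $((A^\sim)^*_+)^{A}$ part of the decomposition. It is also worth noting in passing that the two notions of maximal mixedness match up without ambiguity: the Dixmier set $D_A(\phi)$ defining maximal mixedness of $\phi$ is formed using the unitaries $U(A^\sim)$, and since $A^\sim$ is already unital it contributes the very same group $U(A^\sim)$, so the restriction map of Proposition \ref{Dextensions} (i) underlying Theorem \ref{MMextensions} (i) refers to exactly the Dixmier sets used to define maximal mixedness of $\phi$.
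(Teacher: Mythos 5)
Your proof is correct and is exactly the argument the paper intends: the corollary is stated without proof immediately after Theorem \ref{MMextensions}, precisely because it follows by taking $J=A$ as a proper closed ideal of $A^\sim$ and noting that the norm-preserving extension $\tilde\phi$ lies in $((A^\sim)^*_+)^A$, so part (i) applies. Your added check that both notions of maximal mixedness refer to the same unitary group $U(A^\sim)$ is a worthwhile point of care, and nothing in your write-up deviates from the paper's route.
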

	
In view of the previous corollary in the sequel we focus our attention  on unital C*-algebras.
Further, since the scalar multiples of a maximally mixed functional are maximally mixed, we
work with states. We denote by $S(A)$ the state space of $A$ and by $S_\infty(A)$ the set of maximally mixed states of $A$.

Let $A$ be a unital C*-algebra. Consider states $\phi\in S(A)$ of the following two types:
\begin{enumerate}
	\item[(A)]
	$\phi$ is tracial,
	\item[(B)]
	$\phi$ factors through a simple quotient $A/M$ without bounded traces.
\end{enumerate}
Not much effort is needed to see that the states of these types are maximally mixed (for tracial states, this is obvious, whereas for type (B) states, it follows from a short argument in Lemma \ref{InfiniteSimpleMM} below); this prompts us to ponder whether all maximally mixed states can be described in terms of these ones.
We show in Theorem \ref{maxmixS} that we are close to getting all maximally mixed states by taking the convex hull of these ones -- although we don't know whether the set of maximally mixed states is convex, see Question \ref{MMconvexquestion} below.

\begin{lemma}\label{InfiniteSimpleMM}
If $B$ is a simple unital C*-algebra with no bounded traces, then for every state $\phi \in S(B)$, $D_B(\phi)=S(B)$, and thus every state of $B$ is maximally mixed.
Therefore every type (B) state of a unital C*-algebra is maximally mixed.
\end{lemma}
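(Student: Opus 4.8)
The plan is to prove the stronger, cleaner statement $D_B(\phi)=S(B)$ for every $\phi\in S(B)$, from which everything else is formal. First I would record the easy inclusion: every $T\in\MM(B^*)$ preserves the value at $1$, since $(T\phi)(1)=\sum_j\lambda_j\phi(u_j^*u_j)=\phi(1)$, and $T\phi$ is positive, so by Lemma \ref{MMoperators} every element of $D_B(\phi)$ is again a state; thus $D_B(\phi)\subseteq S(B)$. The whole content is the reverse inclusion, and I would obtain it by comparing support functions. Two weak* compact convex subsets of $B^*$ coincide once their support functions agree, so $D_B(\phi)=S(B)$ is \emph{equivalent} to
\[
\sup_{\rho\in D_B(\phi)}\rho(a)=\sup_{\psi\in S(B)}\psi(a)=\max\sigma(a)\qquad(a\in B_\sa),
\]
and by Lemma \ref{DphiphiD} the left-hand side is exactly $\sup_{u\in U(B)}\phi(uau^*)$. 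Hence the lemma reduces to the single assertion that $\sup_{u\in U(B)}\phi(uau^*)=\max\sigma(a)$ for every state $\phi$ and every $a\in B_\sa$.

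The inequality $\leq$ is immediate. For $\geq$ I would pass to the GNS representation $(\pi_\phi,H_\phi,\xi_\phi)$, which is faithful since $B$ is simple, so $\sigma(\pi_\phi(a))=\sigma(a)$. Writing $N=\pi_\phi(B)''$ and letting $\tilde\phi$ be the normal extension of $\phi$, Kaplansky density makes $U(B)$ strong*-dense in $U(N)$ and $v\mapsto\tilde\phi(vav^*)$ strong*-continuous on the unit ball, so $\sup_{u\in U(B)}\phi(uau^*)=\sup_{v\in U(N)}\tilde\phi(vav^*)$. The decisive structural input is that \emph{$N$ is properly infinite}: a nonzero finite central projection $z\in N$ would carry a normal tracial state $\tau_z$, and then $x\mapsto\tau_z(\pi_\phi(x)z)$ would be a nonzero bounded trace on $B$ (tracial because $z$ is central), contradicting the hypothesis. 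Granting proper infiniteness, the argument is transparent in the type III situation that governs the purely infinite case: given $\epsilon,\delta>0$ set $p=\chi_{(\max\sigma(a)-\delta,\,\max\sigma(a)]}(\pi_\phi(a))\in N$, a nonzero projection with $\pi_\phi(a)\geq(\max\sigma(a)-\delta)p$. Since then every nonzero projection is equivalent to $1$, one can choose $v\in U(N)$ with $vpv^*$ dominating the support of $\tilde\phi$ up to $\tilde\phi$-measure $\epsilon$, whence $\tilde\phi(vav^*)\geq(\max\sigma(a)-\delta)\,\tilde\phi(vpv^*)\geq(\max\sigma(a)-\delta)(1-\epsilon)$; letting $\epsilon,\delta\to0$ gives the claim.

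The step I expect to be the main obstacle is exactly this rotation in the case where $N$ is properly infinite but not type III (a semifinite, type $II_\infty$ summand): there a finite spectral projection $p$ cannot be rotated to cover a diffuse normal state, and the naive bound fails. The point to push through is that the no-bounded-trace hypothesis forbids this situation. Indeed, if $N$ carried a faithful normal semifinite trace $\mathrm{Tr}$, then $\tau=\mathrm{Tr}\circ\pi_\phi$ would be a lower semicontinuous trace on $B_+$; were it finite at some nonzero element, its trace ideal would be a nonzero, hence dense, ideal of the simple unital algebra $B$, forcing $\tau(1)<\infty$ and so producing a bounded trace—again a contradiction. Converting this observation into control of the relevant spectral projections $\chi_{(\,\cdot\,)}(\pi_\phi(a))$, ensuring they are properly infinite so that the type III rotation applies verbatim, is the technical heart of the proof.

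Finally, once $D_B(\phi)=S(B)$ for every $\phi\in S(B)$, maximal mixedness is immediate: for any $\phi$ and any $\psi\in D_B(\phi)=S(B)$ one has $D_B(\psi)=S(B)=D_B(\phi)$, so $\phi$ is maximally mixed. For the closing assertion, if $\phi\in S(A)$ factors through a simple quotient $A/M$ with no bounded traces, then $\phi$ vanishes on $M$ and induces $\bar\phi\in S(A/M)$, which is maximally mixed by the first part applied to $B=A/M$; Theorem \ref{MMextensions}(ii), with $J=M$, then shows that $\phi$ itself is maximally mixed.
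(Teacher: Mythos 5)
Your reduction is sound and essentially equivalent to the paper's first step: by Hahn--Banach separation (support functions) together with Lemma \ref{DphiphiD}, the lemma comes down to the single claim that $\sup_{u\in U(B)}\phi(uau^*)=\|a\|$ for every state $\phi$ and every $a\in B_+$. But that claim is precisely the hard content of the lemma, and it is exactly what the paper does \emph{not} prove from scratch: it quotes Haagerup--Zsid\'o \cite{HZ} (Th\'eor\`eme 4), which gives $\|a\|\cdot 1\in D_B(a)$ for $a\geq 0$ in a simple unital traceless C*-algebra, and the lemma then follows in a few lines. Your proposal instead tries to reprove this input by von Neumann algebra techniques in the GNS representation, and that attempt has genuine gaps.

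Concretely: (a) your ``transparent'' type III rotation invokes ``every nonzero projection is equivalent to $1$'', which is valid in $\sigma$-finite type III \emph{factors}; but $N=\pi_\phi(B)''$ is in general not a factor, and for a non-factor the rotation fails unless the spectral projection $p$ has full central support (if $p$ lies under a central projection disjoint from the support of $\tilde\phi$, then $\tilde\phi(vpv^*)=0$ for every unitary $v$). Full central support does in fact follow from simplicity --- each central compression $x\mapsto\pi_\phi(x)z$ has kernel an ideal of $B$, hence is faithful and isometric, so $\|\pi_\phi(a)z\|=\|a\|$ --- but you never make this observation, and it is needed even in the case you call transparent. (b) More seriously, the semifinite case, which you yourself flag as ``the technical heart'', is left undone, and your trace-ideal observation does not close it: knowing that $\mathrm{Tr}\circ\pi_\phi$ is $+\infty$ on every nonzero element of $B_+$ gives $\mathrm{Tr}(p)=\infty$ for the relevant spectral projections, but in a properly infinite non-factor a projection of infinite trace can still be \emph{finite} (e.g.\ in $B(H)\,\bar\otimes\, L^\infty[0,1]$ take $p(t)$ of finite rank $\lceil 1/t\rceil$), so proper infiniteness of $p$ does not follow. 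Filling this hole requires a real argument: for instance, if some central compression $pz$ were finite, then $pzNpz$ is a finite von Neumann algebra whose normal tracial state pulls back to a nonzero bounded trace on the hereditary subalgebra $\overline{f(a)Bf(a)}$ of $B$, and one must then show that a simple unital traceless C*-algebra admits no bounded trace on any nonzero hereditary subalgebra (Cuntz comparison or Brown-type arguments) --- which is essentially re-deriving the Haagerup--Zsid\'o theorem. So, as written, the proposal establishes the routine half of the lemma and defers exactly the part carrying all the weight; the efficient repair is to do what the paper does and cite \cite{HZ}.
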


\begin{proof}
Suppose for a contradiction that there exists $\psi \in S(B) \setminus D_B(\phi)$.
By the Hahn-Banach theorem,  there exist  $a\in A_{\sa}$ and $t\in \R$ such that $D_A(\phi)(a)\leq t$ (i.e., $s \leq t$ for all $s \in D_A(\phi)(a)$) and $\psi(a)>t$. Translating by a scalar, we may assume that $a$ is positive.
We then know that $\phi(D_A(a))\leq t$ (Lemma \ref{DphiphiD}) and $\psi(a)>t$. But $\|a\|\cdot 1\in D_A(a)$ (by \cite[Th\'eor\`eme 4]{HZ}), and so $\|a\|\leq t$, which
 contradicts that $\psi(a)>t$.

The final statement now follows by Theorem \ref{MMextensions} (ii).
\end{proof}

In the following proposition (and henceforth,  where appropriate) by a type (B) positive functional  we mean a  positive scalar multiple  of a type (B) state.
\begin{proposition}\label{ABstatesConvexity}
Let $A$ be a unital C*-algebra.  Let $\phi\in A^*_+$ be maximally mixed and  let $\psi\in A^*_+$ be either tracial or type (B).
Then $\phi+\psi$ is maximally mixed and $D_A(\phi+\psi)=D_A(\phi)+D_A(\psi)$.
\end{proposition}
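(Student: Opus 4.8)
The plan is to treat the two cases for $\psi$ separately, since the tracial case is essentially degenerate while the type (B) case leans entirely on the ideal-decomposition machinery of Theorem \ref{MMextensions}.

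\emph{Tracial case.} If $\psi$ is tracial then $u\psi u^* = \psi$ for every unitary $u$, so $T\psi = \psi$ for every $T \in \M(A^*)$ and, passing to point-weak* limits, for every $T \in \MM(A^*)$. By Lemma \ref{MMoperators} this gives $D_A(\psi) = \{\psi\}$ and, more usefully, $D_A(\phi+\psi) = \{T\phi + \psi : T \in \MM(A^*)\} = D_A(\phi) + \psi$; the identity $D_A(\phi+\psi) = D_A(\phi) + D_A(\psi)$ is then immediate. For maximal mixedness I would take any $\chi = \phi' + \psi \in D_A(\phi+\psi)$ with $\phi' \in D_A(\phi)$; the same computation yields $D_A(\chi) = D_A(\phi') + \psi$, and since $\phi$ is maximally mixed $D_A(\phi') = D_A(\phi)$, whence $D_A(\chi) = D_A(\phi+\psi)$.

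\emph{Type (B) case.} Let $M$ be a maximal closed two-sided ideal with $A/M$ simple and traceless through which $\psi$ factors. Taking $J = M$ in Theorem \ref{MMextensions}, decompose $\phi = \phi_1 + \phi_2$ with $\phi_1 \in (A^*_+)^M$ and $\phi_2 \in (A^*_+)_M$. Since $\psi$ factors through $A/M$ it vanishes on $M$, so $\psi \in (A^*_+)_M$, and by uniqueness of the decomposition $\phi_1 + (\phi_2 + \psi)$ is the $M$-decomposition of $\phi + \psi$. Theorem \ref{MMextensions}(iii) applied to the maximally mixed $\phi$ gives that $\phi_1$ and $\phi_2$ are maximally mixed. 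The crucial point is that $\phi_2 + \psi$ induces on $A/M$ the positive functional $\tilde\phi_2 + \tilde\psi$, and since $A/M$ is simple unital without bounded traces, Lemma \ref{InfiniteSimpleMM} makes \emph{every} positive functional of $A/M$ maximally mixed; hence $\tilde\phi_2 + \tilde\psi$ is maximally mixed, and Theorem \ref{MMextensions}(ii) shows $\phi_2 + \psi$ is maximally mixed. Theorem \ref{MMextensions}(iii) then delivers both that $\phi + \psi$ is maximally mixed and that $D_A(\phi+\psi) = D_A(\phi_1) + D_A(\phi_2 + \psi)$.

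For the product formula in the type (B) case, combining the last identity with $D_A(\phi) = D_A(\phi_1) + D_A(\phi_2)$ reduces the claim to $D_A(\phi_2 + \psi) = D_A(\phi_2) + D_A(\psi)$. As $\phi_2$, $\psi$, and $\phi_2+\psi$ all vanish on $M$, Proposition \ref{Dextensions}(iii) transports this, via the injective linear map $q_M^*$, to the identity $D_{A/M}(\tilde\phi_2 + \tilde\psi) = D_{A/M}(\tilde\phi_2) + D_{A/M}(\tilde\psi)$ in $(A/M)^*$. There I would use Lemma \ref{InfiniteSimpleMM} together with homogeneity $D_{A/M}(\lambda\rho) = \lambda D_{A/M}(\rho)$ to identify each $D_{A/M}(\rho)$ with $\|\rho\|\cdot S(A/M)$; the additivity $\|\rho_1 + \rho_2\| = \|\rho_1\| + \|\rho_2\|$ for positive functionals on a unital algebra, combined with the convexity identity $\lambda S(A/M) + \mu S(A/M) = (\lambda+\mu)S(A/M)$, then matches the three Dixmier sets. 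I expect the only real subtlety to be the bookkeeping around the ideal decomposition---verifying that $\phi_1 + (\phi_2 + \psi)$ is genuinely the unique $M$-decomposition of $\phi+\psi$ and correctly invoking the ``moreover'' clause of Theorem \ref{MMextensions}(iii)---rather than any new idea, since the simplicity of $A/M$ does all the heavy lifting.
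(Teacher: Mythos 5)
Your proof is correct and takes essentially the same approach as the paper's: the tracial case via invariance of $\psi$ under mixing operators, and the type (B) case via the decomposition $\phi+\psi=\phi_1+(\phi_2+\psi)$ relative to $M$, Theorem \ref{MMextensions} (iii), and Lemma \ref{InfiniteSimpleMM}. The paper merely compresses your final step by writing $D_A(\phi_2+\psi)=(\phi_2(1)+\psi(1))S(A)_M$ with $S(A)_M=S(A/M)\circ q_M$ and splitting by convexity, which is exactly your transfer through $q_M^*$ followed by the identity $\lambda S+\mu S=(\lambda+\mu)S$.
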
	

\begin{proof}
	If $\psi$ is tracial then $D_A(\phi+\psi)=D_A(\phi)+\psi$, from which the  result follows at once.
	Suppose then that $\psi$ is type (B), i.e., it factors through a simple quotient $A/M$ without bounded traces.
	Let  $\phi=\phi_1+\phi_2$, where $\phi_1\in (A^*_+)^M$ and
	$\phi_2\in (A_+^*)_M$. Then
	\[
	\phi+\psi=\phi_1+(\phi_2+\psi).
	\]
	By Theorem \ref{MMextensions} (iii), $\phi_1$ is maximally mixed.
	On the other hand, $\phi_2+\psi$ is type (B) (it factors through $A/M$), so by Lemma \ref{InfiniteSimpleMM}, it is maximally mixed.
	Hence, by Theorem \ref{MMextensions} (iii),
  $\phi+\psi=\phi_1+(\phi_2+\psi)$
	is maximally mixed. Moreover, Theorem \ref{MMextensions} (iii) also shows that
	$D_A(\phi+\psi)=D_A(\phi_1)+D_A(\phi_2+\psi)$. But $D_A(\phi_2+\psi)=(\phi_2(1)+\psi(1))S(A)_M$, where $S(A)_M = S(A/M) \circ q_M$ (i.e., all states that factor through $A/M$).
	So
	\begin{align*}
	D_A(\phi+\psi) &=D_A(\phi_1)+\phi_2(1)S(A)_M+\psi(1)S(A)_M\\
	&=D_A(\phi_1)+D_A(\phi_2)+D_A(\psi)\\
    &=D_A(\phi)+D_A(\psi),
	\end{align*}
	using Theorem \ref{MMextensions} (iii) again for the last equality.
\end{proof}

Given a C*-algebra $A$, we denote by  $T(A)$ the set of tracial states on $A$.
\begin{theorem}\label{maxmixS}
	Let $A$ be a unital C*-algebra. Let $S_{(B)}(A)$ denote the set of states on $A$ of type (B).
Then
\[
	\overline{\mathrm{co}(T(A)\cup S_{(B)}(A))}^{\|\cdot\|}\subseteq S_\infty(A)\subseteq \overline{\mathrm{co}(T(A)\cup S_{(B)}(A))}^{\mathrm{weak^*}}.
\]
\end{theorem}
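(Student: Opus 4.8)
The plan is to handle the two inclusions separately, since they have quite different characters. For the left-hand (norm-closed) inclusion I would first show that $\mathrm{co}(T(A)\cup S_{(B)}(A))\subseteq S_\infty(A)$ and then close up. A tracial state has singleton Dixmier set and so is maximally mixed, every type (B) state is maximally mixed by Lemma~\ref{InfiniteSimpleMM}, and positive scalar multiples of maximally mixed functionals are again maximally mixed. Given a convex combination $\rho=\sum_i \lambda_i\tau_i+\sum_j\mu_j\sigma_j$ of tracial states $\tau_i$ and type (B) states $\sigma_j$, I would add the summands one at a time and invoke Proposition~\ref{ABstatesConvexity} at each step, with the running partial sum in the role of the maximally mixed $\phi$ and the next tracial or type (B) summand in the role of $\psi$, concluding by induction that $\rho$ is maximally mixed; as $\rho$ is a state, $\rho\in S_\infty(A)$. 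Finally, since the state space is norm closed and the set of maximally mixed functionals is norm closed (Theorem~\ref{normclosed}), the norm closure of $\mathrm{co}(T(A)\cup S_{(B)}(A))$ consists of maximally mixed states, which gives the first inclusion.

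For the second inclusion let $K$ denote the weak* closed convex hull of $T(A)\cup S_{(B)}(A)$, a weak* closed convex set. Since $A$ is the dual of $(A^*,\text{weak}^*)$, a maximally mixed state $\phi$ fails to lie in $K$ only if some $a\in A_{\sa}$ strictly separates it from $K$; so it suffices to prove, for every $a\in A_{\sa}$,
\[
\phi(a)\ \leq\ \max\Big(\sup_{\tau\in T(A)}\tau(a),\ \sup_{\sigma\in S_{(B)}(A)}\sigma(a)\Big)=:s(a),
\]
because the right-hand side equals $\sup_{\rho\in K}\rho(a)$ (a continuous linear functional attains the same supremum over a set, its convex hull, and the weak* closure). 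Note that $T(A)\cup S_{(B)}(A)$ is nonempty for unital $A$: a maximal ideal $M$ exists, and $A/M$ is either simple traceless, yielding a type (B) state, or carries a trace, yielding a type (A) state. I would then attack the displayed inequality by combining two ingredients. First, $s(a)$ should be exactly the quantity governing how far $a$ can be pushed ``down'' inside the Dixmier set $D_A(a)$: tracial states bound the averages of $a$ over $D_A(a)$, while for each maximal ideal $M$ with $A/M$ simple and traceless the relation $\|q_M(a)\|\cdot 1\in D_{A/M}(q_M(a))$ from \cite{HZ} (used already in Lemma~\ref{InfiniteSimpleMM}) forces the top of the spectrum in that quotient into $s(a)$. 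Second, I would exploit minimality of $D_A(\phi)$: by Lemma~\ref{DphiphiD}, $D_A(\phi)(a)=\overline{\phi(D_A(a))}$ is an interval $[m,M]$, every element of $D_A(\phi)$ is itself maximally mixed with the \emph{same} Dixmier set, so this interval is an invariant of the class, and the goal becomes $M=\sup\phi(D_A(a))\leq s(a)$, whence $\phi(a)\leq M\leq s(a)$.

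The hard part, and the essential use of maximal mixedness, is precisely the bridge $M\leq s(a)$: knowing merely that some $b\in D_A(a)$ has small $\phi$-value does not by itself bound $\phi(a)$, since $\phi(a)$ may lie anywhere in $[m,M]$. To cross this gap I would localise via the ideal decomposition of Theorem~\ref{MMextensions}, splitting $\phi=\phi_1+\phi_2$ along a suitable closed ideal $J$, with $\phi_1$ living on $J$ and $\phi_2$ factoring through $A/J$, each maximally mixed; iterating this over the ideal lattice (via a Zorn-type exhaustion) so as to isolate, in the quotients, genuinely tracial and simple-traceless behaviour, while the residual action of $a$ is controlled by $s(a)$. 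Verifying that such an exhaustion terminates with only type (A) and type (B) contributions, and that passing to the weak* closure $K$ absorbs the resulting approximation errors, is where I expect the real work to lie; this is the step I would regard as the main obstacle.
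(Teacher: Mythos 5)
Your first inclusion is complete and is exactly the paper's argument: Proposition \ref{ABstatesConvexity}, applied one summand at a time, gives $\mathrm{co}(T(A)\cup S_{(B)}(A))\subseteq S_\infty(A)$, and Theorem \ref{normclosed} then allows passage to the norm closure. The gap is in the second inclusion, precisely at the step you yourself label the ``main obstacle'': bounding $\phi(a)$ itself by $s(a)$. The route you sketch for crossing it --- a Zorn-type exhaustion along the ideal lattice via Theorem \ref{MMextensions} --- is not viable as described: decomposing $\phi=\phi_1+\phi_2$ along an ideal $J$ gives no mechanism relating the piece $\phi_1$ to traces or to simple traceless quotients, there is no reason such an iteration terminates in type (A)/(B) contributions for a general unital C*-algebra, and no argument is offered that the approximation errors are absorbed. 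As written, the hard half of the theorem is unproven.

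The paper gets past your obstacle with two moves absent from your proposal. First, it separates the whole weak* compact convex set $D_A(\phi)$ from $K:=\overline{\mathrm{co}(T(A)\cup S_{(B)}(A))}^{\mathrm{weak}^*}$, not the point $\phi$ from $K$. Since $K$ is itself a Dixmier set, it suffices to prove $D_A(\phi)\cap K\neq\varnothing$ for \emph{every} state $\phi$: for maximally mixed $\phi$, any $\psi\in D_A(\phi)\cap K$ satisfies $\phi\in D_A(\psi)\subseteq K$. Under this set-level separation, the hypothesis to be contradicted reads $\psi'(a)\geq t_2$ for all $\psi'\in D_A(\phi)$, which by Lemma \ref{DphiphiD} is a lower bound $\phi(a')\geq t_2$ on \emph{all} of $D_A(a)$; this is exactly the kind of statement that a single small-norm element of $D_A(a)$ refutes. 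In other words, the reduction replaces your goal $\sup\phi(D_A(a))\leq s(a)$ by the much weaker $\inf\phi(D_A(a))\leq s(a)$, which is why ``some $b\in D_A(a)$ with small $\phi$-value'' suffices for the paper but not for you. Second, the existence of such a small element cannot be extracted from \cite{HZ} alone: the paper invokes \cite[Theorem 4.12]{ART}, which asserts that for positive $a$ the distance from $D_A(a)$ to $0$ is at most $t_1$ whenever every tracial state and every type (B) state takes value at most $t_1$ at $a$ (i.e.\ $\mathrm{dist}(0,D_A(a))\leq s(a)$ in your notation). That theorem is the real engine of this inclusion; without citing it, or reproving something equivalent, your quantity $s(a)$ never gets connected to the norm geometry of $D_A(a)$.
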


Examples \ref{notweaklyclosed}, \ref{notweaklyclosed2}, and \ref{biggerthangammaclosure} show that both inclusions   in the above theorem can be strict.

\begin{proof}
	It follows by Proposition \ref{ABstatesConvexity} that $\mathrm{co}(T(A)\cup S_{(B)}(A))\subseteq S_\infty(A)$, and so by Theorem \ref{normclosed}
	\[ \overline{\mathrm{co}(T(A)\cup S_{(B)}(A))}^{\|\cdot\|} \subseteq S_\infty(A). \]

	To show that   $S_\infty(A)$ is contained in the weak* closure of $\mathrm{co}(T(A)\cup S_{(B)}(A))$, it suffices to show that for any $\phi\in S(A)$ the Dixmier set $D_A(\phi)$  has nonempty intersection with $\overline{\mathrm{co}(T(A)\cup S_{(B)}(A))}^{\mathrm{weak^*}}$. Suppose, for the sake of contradiction, that this is not the case for some $\phi\in S(A)$.
Then, by the Hahn-Banach theorem,  there exists a
self-adjoint element $a$  and real numbers $t_1<t_2$
	such that $\psi(a)\leq t_1$ for all $\psi\in \mathrm{co}(T(A)\cup S_{(B)}(A))$ and $\psi'(a)\geq t_2$
for all $\psi'\in D_A(\phi)$.  Translating $a$
by a multiple of the unit we can assume that it is positive. Since $D_A(\phi)(a)=\overline{\phi(D_A(a))}$ (Lemma \ref{DphiphiD}), we have that $\phi(a')\geq t_2$ for all $a'\in D_A(a)$. On the other hand,
$\psi(a)\leq t_1$ for every tracial state and every state that factors through a simple quotient without bounded traces. By \cite[Theorem 4.12]{ART}, the distance from $D_A(a)$ to 0 is at most $t_1$. Thus, there exists $a'\in D_A(a)$
such that $\|a'\|<t_2$. This contradicts that
$\phi(a')\geq t_2$.
\end{proof}

Condition (i) of the following corollary has appeared in several papers previously (e.g. \cite{Murphy}, \cite{HZ}, \cite{Ozawa}).
In fact, an improved version of this corollary is \cite[Theorem 5]{HZ}.

\begin{corollary}\label{onlytraces}
Let $A$ be a unital C*-algebra. The following are equivalent:
\begin{enumerate}
\item
Every simple quotient of $A$ has a bounded trace.
\item
All the maximally mixed states of $A$ are tracial.
\item
For every $\phi\in S(A)$ the Dixmier set $D_A(\phi)$ contains a  tracial state.
\end{enumerate}
\end{corollary}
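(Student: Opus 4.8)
The plan is to prove the three conditions equivalent by running the cycle $(i)\Rightarrow(ii)\Rightarrow(iii)\Rightarrow(i)$. Two bookkeeping facts organize everything. First, condition (i) is exactly the assertion that $S_{(B)}(A)=\emptyset$: if every simple quotient carries a bounded trace then nothing can factor through a simple traceless quotient, and conversely a simple traceless quotient $A/M$ is a unital C*-algebra, hence has states, each of which pulls back to a type (B) state of $A$. Second, $T(A)$ is already convex and weak* closed (a weak* limit of traces is a trace, and $S(A)$ is weak* compact), so that $\overline{\mathrm{co}(T(A))}^{\mathrm{weak^*}}=T(A)$.

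For $(i)\Rightarrow(ii)$ I would feed $S_{(B)}(A)=\emptyset$ into Theorem \ref{maxmixS}, which then collapses to
\[
\overline{\mathrm{co}(T(A))}^{\|\cdot\|}\subseteq S_\infty(A)\subseteq \overline{\mathrm{co}(T(A))}^{\mathrm{weak^*}}=T(A),
\]
so that every maximally mixed state is tracial (in fact $S_\infty(A)=T(A)$).

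For $(ii)\Rightarrow(iii)$, fix $\phi\in S(A)$. Since $A$ is unital and $\phi$ is a state, every $u\phi u^*$ is a state, convex combinations of states are states, and $S(A)$ is weak* compact, so $D_A(\phi)\subseteq S(A)$. The Zorn's lemma argument recorded after the definition of maximal mixedness produces a maximally mixed functional in $D_A(\phi)$; it is therefore a maximally mixed \emph{state}, hence tracial by (ii), giving the required trace in $D_A(\phi)$. For $(iii)\Rightarrow(i)$ I argue contrapositively: if (i) fails, pick a maximal ideal $M$ with $A/M$ simple and traceless, a state $\tilde\phi\in S(A/M)$, and set $\phi=\tilde\phi\circ q_M$, a type (B) state. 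Combining Lemma \ref{InfiniteSimpleMM} (which gives $D_{A/M}(\tilde\phi)=S(A/M)$) with the bijection of Proposition \ref{Dextensions}(iii) identifies $D_A(\phi)$ with $S(A)_M$, exactly as in the proof of Proposition \ref{ABstatesConvexity}. No member of $S(A)_M$ is tracial, since a trace factoring through $A/M$ would descend to a trace on $A/M$, contradicting tracelessness; hence $D_A(\phi)$ contains no trace and (iii) fails.

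I expect no serious obstacle, since the substantive work is already carried by Theorem \ref{maxmixS} and Proposition \ref{Dextensions}. The only points demanding care are the two framing facts that open and close the cycle---that (i) is the vanishing of $S_{(B)}(A)$ and that $T(A)$ is weak* closed---together with the small but essential observation in $(ii)\Rightarrow(iii)$ that the maximally mixed functional handed over by Zorn's lemma automatically lies in the state space.
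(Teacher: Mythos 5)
Your proof is correct and follows essentially the same cycle as the paper's: (i)$\Rightarrow$(ii) via Theorem \ref{maxmixS} together with the weak* closedness of $T(A)$, (ii)$\Rightarrow$(iii) via the Zorn's lemma fact that every weak* compact Dixmier set contains a maximally mixed functional (necessarily a state here), and (iii)$\Rightarrow$(i) via the observation that a trace in the Dixmier set of a state factoring through a simple quotient $A/M$ descends to a trace on $A/M$. The only cosmetic difference is in the last step: the paper argues directly (every element of $D_A(\phi)$ factors through $A/M$, so any trace in it yields a bounded trace on $A/M$), whereas you argue contrapositively and invoke Lemma \ref{InfiniteSimpleMM} plus Proposition \ref{Dextensions}(iii) to obtain the full identification $D_A(\phi)=S(A)_M$ --- heavier machinery than needed, since the inclusion $D_A(\phi)\subseteq S(A)_M$ is immediate from $S(A)_M$ being a Dixmier set, but perfectly valid.
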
	
\begin{proof}
	(i)$\Rightarrow$(ii) follows from the previous theorem, since (i) implies that  $\mathrm{co}(T(A)\cup S_{(B)}(A))=T(A)$, which is already a weak* closed set.
As remarked at the beginning of this section, every Dixmier set $D_A(\phi)$ must contain maximally mixed functionals.  With this in mind,  (ii)$\Rightarrow$(iii) is obvious.
Finally,  assume (iii), and let $A/M$ be a given simple quotient of $A$. Choose any $\phi\in S(A)$  that factors through  $A/M$. Then any trace
in $D_A(\phi)$ factors through $A/M$. We thus have  (i).	
\end{proof}

In the case of simple C*-algebras we obtain a complete description of the maximally mixed positive functionals:
\begin{corollary}\label{MMsimplealgebra}
Let $A$ be a simple C*-algebra.
\begin{enumerate}
\item
If $A$ is unital and has at least one non-zero bounded trace then every maximally mixed positive functional on $A$ is tracial.
\item
If $A$ is unital and has no bounded traces then all the positive functionals on $A$ are maximally mixed.	
\item
If $A$ is non-unital then every maximally mixed positive functional on $A$ is tracial.
\end{enumerate}		
\end{corollary}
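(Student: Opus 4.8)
The plan is to handle the three cases separately, in each case reducing the problem to Corollary \ref{onlytraces} and, in the non-unital case, additionally to Corollary \ref{MMunitization}. Throughout I would exploit the remark at the start of Section \ref{Maxmixed} that scalar multiples of maximally mixed functionals are maximally mixed: a nonzero $\phi\in A^*_+$ is maximally mixed precisely when the state $\phi/\|\phi\|$ is, and the restriction of a tracial functional to a subalgebra is again tracial. This lets me move freely between states and positive functionals, so it suffices in parts (i) and (iii) to show that every maximally mixed \emph{state} is tracial.

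Parts (i) and (ii) are then almost immediate. For part (ii), since $A$ is unital, simple, and has no bounded trace, Lemma \ref{InfiniteSimpleMM} (with $B=A$) already gives $D_A(\phi)=S(A)$ for every state, so every state, and hence every positive functional, is maximally mixed. For part (i), I would note that a simple unital $A$ has exactly one nonzero quotient, namely $A/\{0\}=A$ itself, which carries a nonzero bounded trace by hypothesis; thus every simple quotient of $A$ has a bounded trace, condition (i) of Corollary \ref{onlytraces} holds, and every maximally mixed state of $A$ is tracial.

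Part (iii) is the substantive case, and I would reduce it to the method of part (i) applied to the unitization $A^\sim$. By Corollary \ref{MMunitization}, $\phi\in A^*_+$ is maximally mixed if and only if its norm-preserving positive extension $\tilde\phi$ to $A^\sim$ is; and since the restriction to $A$ of a tracial functional on $A^\sim$ is tracial, it suffices to show that every maximally mixed state of the unital algebra $A^\sim$ is tracial. By Corollary \ref{onlytraces} this in turn reduces to showing that every simple quotient of $A^\sim$ has a bounded trace. The main obstacle — and the only real content — is to pin down the ideal structure of $A^\sim$: I claim that when $A$ is simple and non-unital, $A$ is the \emph{unique} proper nonzero closed two-sided ideal of $A^\sim$. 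Granting this, the only nontrivial simple quotient of $A^\sim$ is $A^\sim/A\cong\C$, which plainly has a bounded trace, and Corollary \ref{onlytraces} applies to give the result.

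To establish the claim, let $I$ be a closed ideal of $A^\sim$ with $0\neq I\neq A^\sim$. Then $I\cap A$ is a closed ideal of the simple algebra $A$, hence equals $0$ or $A$. If $I\cap A=A$ then $A\subseteq I\subsetneq A^\sim$, and since $A^\sim/A\cong\C$ is simple, $A$ is a maximal ideal, forcing $I=A$. If instead $I\cap A=0$, then the quotient map $A^\sim\to A^\sim/A\cong\C$ restricts to an injection on $I$, so $I=\C x$ for some $x=a+\lambda 1$ with $\lambda\neq 0$ (as $x\notin A$). For each $b\in A$ one has $bx\in I=\C x$, and comparing the $\C1$-components (the $\C1$-component of $bx=ba+\lambda b\in A$ is $0$) forces $bx=0$; symmetrically $xb=0$. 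Hence $(-\lambda^{-1}a)b=b=b(-\lambda^{-1}a)$ for all $b\in A$, so $-\lambda^{-1}a$ is an identity for $A$, contradicting non-unitality. This rules out the second case and proves the claim, completing the argument.
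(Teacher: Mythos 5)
Your proposal is correct and takes essentially the same route as the paper: parts (i) and (ii) via Corollary \ref{onlytraces} and Lemma \ref{InfiniteSimpleMM}, and part (iii) by passing to $A^\sim$ (your Corollary \ref{MMunitization} is exactly the specialization of Theorem \ref{MMextensions} (i) that the paper invokes) and observing that $\C$ is the only simple quotient of $A^\sim$, which has a bounded trace. Your explicit verification of the ideal structure of $A^\sim$ simply fills in a standard fact that the paper asserts without proof.
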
	

\begin{proof}
(i) follows from Corollary \ref{onlytraces}, while (ii) is Lemma \ref{InfiniteSimpleMM}.
For (iii), note that $A^\sim$ has only one simple quotient, namely $\mathbb C$, and it has a bounded trace. Hence by Corollary \ref{onlytraces}, every maximally mixed state of $A^\sim$ is tracial, and then (iii) follows from Theorem \ref{MMextensions} (i).
\end{proof}

\begin{question}\label{MMconvexquestion}
Let $A$ be a unital C*-algebra.
Is the set $S_\infty(A)$ of  maximally mixed states convex?
\end{question}

A closely related question is the following:
\begin{question}\label{Qsumdixmierset}
Given maximally mixed functionals $\phi,\psi\in A_+^*$, do we have $D_A(\phi+\psi)=D_A(\phi)+D_A(\psi)$?
\end{question}	

	An affirmative answer to  Question \ref{Qsumdixmierset} for all $\phi,\psi\in S_\infty(A)$ also answers  affirmatively
	Question \ref{MMconvexquestion}. Indeed, suppose that Question \ref{Qsumdixmierset}
	has an affirmative answer for all $\phi,\psi\in S_\infty(A)$. Say  we are given  $\phi,\psi\in S_\infty(A)$ and $\phi'\in D_A(\phi)$ and $\psi'\in D_A(\psi)$. Then
\[
D_A(\phi+\psi)=D_A(\phi)+D_A(\psi)=D_A(\phi')+D_A(\psi')=D_A(\phi'+\psi').
\]
Observe that Proposition \ref{ABstatesConvexity} answers Question \ref{Qsumdixmierset} affirmatively in the case that $\psi$ is either tracial or type (B).

Turning to the question of whether the containment $S_\infty(A)\subseteq \overline{\mathrm{co}(T(A)\cup S_{(B)}(A))}^{\mathrm{weak^*}}$ is strict (where $S_{(B)}(A)$ is as defined in Theorem \ref{maxmixS}), it is evident from that theorem that (non-)strictness of this inequality is equivalent to the natural question of whether $S_\infty(A)$ is weak* closed.
The next proposition gives an obstruction to $S_\infty(A)$ being weak* closed -- in fact, it is the only obstruction we have been able to find, see Question \ref{Qweakstarclosed}.

\begin{proposition}\label{MMweakstarclosed}
Let $A$ be a unital C*-algebra such that $S_\infty(A)$ is a weak* closed subset of $S(A)$.
Then the set $X\subseteq \mathrm{Prim}(A)$ of all maximal ideals $M$ such that $A/M$ either is isomorphic to $\C$ or has no bounded traces is a closed subset of $\mathrm{Prim}(A)$.	
\end{proposition}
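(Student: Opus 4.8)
The plan is to prove that the complement $X^c := \mathrm{Prim}(A)\setminus X$ is open, using as a bridge the canonical surjection $q\colon P(A)\to\mathrm{Prim}(A)$ from the pure state space $P(A)$ (with the weak$^*$ topology) to the primitive spectrum, which sends a pure state $\phi$ to $\ker\pi_\phi$; recall that $q$ is continuous and open (see \cite{Dix}). Two facts make maximal mixedness interact with the topology: (a) if $M\in X$ then $A/M\cong\C$ or $A/M$ is simple and traceless, so by Lemma \ref{InfiniteSimpleMM} every pure state with kernel $M$ is tracial or of type (B), hence lies in $S_\infty(A)$ -- equivalently, if \emph{some} pure state over $M$ is not maximally mixed then $M\notin X$; and (b) by hypothesis $S_\infty(A)$ is weak$^*$ closed, so $W:=S(A)\setminus S_\infty(A)$ is weak$^*$ open.

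Granting these, the key reduction is: for each $M_0\in X^c$, produce a pure state $\phi_0$ with $q(\phi_0)=M_0$ and $\phi_0\notin S_\infty(A)$. Then $\phi_0\in W\cap P(A)$, which is open in $P(A)$, so by openness of $q$ the set $V:=q(W\cap P(A))$ is an open neighbourhood of $M_0$; and each $M\in V$ equals $q(\psi)$ for some pure $\psi\in W$, i.e.\ carries a pure state outside $S_\infty(A)$, whence $M\notin X$ by (a). Thus $V\subseteq X^c$, and $X^c$ is open.

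It remains to find $\phi_0$, and I would split according to why $M_0\notin X$. If $M_0$ is maximal with $A/M_0\not\cong\C$ but $A/M_0$ carrying a bounded trace, then $A/M_0$ is simple, unital and $\neq\C$, so it has a non-tracial state and therefore (Krein--Milman) a non-tracial pure state $\bar\phi$. By Corollary \ref{MMsimplealgebra}(i) every maximally mixed positive functional on $A/M_0$ is tracial, so $\bar\phi$ is not maximally mixed, and hence $\phi_0:=\bar\phi\circ q_{M_0}$ is a pure state over $M_0$ with $\phi_0\notin S_\infty(A)$ by Theorem \ref{MMextensions}(ii).

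The remaining case, $M_0$ not maximal, is the crux. Writing $B:=A/M_0$, this is a primitive, non-simple, unital C*-algebra, and (again by Theorem \ref{MMextensions}(ii)) it suffices to exhibit a pure state of $B$ with faithful GNS representation that is not maximally mixed. I would take a vector state $\bar\phi=\omega_\xi$ coming from a faithful irreducible representation $\rho$ on $H$ (necessarily $\dim H=\infty$) and show it is not maximally mixed. A soft reduction comes first: applying Milman's theorem to Theorem \ref{maxmixS} for $B$, if $\omega_\xi$ were maximally mixed then, being pure, it would lie in $T(B)\cup\overline{S_{(B)}(B)}^{w^*}$. The tracial alternative is impossible for $B\not\cong\C$ (a pure trace would give a tracial vector in the irreducible representation, forcing the normal vector state to be tracial on $\rho(B)''=B(H)$, hence $\dim H<\infty$, and then equal to the normalized trace only if $\dim H=1$); and every type (B) state vanishes on $J:=\bigcap\{N:N\text{ maximal},\ B/N\text{ traceless}\}$, so $\omega_\xi(J)=0$, while $\omega_\xi$ has full norm on each nonzero ideal because $\rho(N)$ acts non-degenerately, forcing $J=0$. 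This isolates the genuine obstacle to the residual configuration in which $B$ is non-simple but its traceless simple quotients separate points. Here I would aim to locate inside $D_B(\omega_\xi)$ a state whose mass escapes some fixed nonzero proper ideal $N$, contradicting that (by Theorem \ref{MMextensions}(iii)) maximal mixedness keeps all of $D_B(\omega_\xi)$ in $(B^*_+)^N$. When $\rho(N)$ contains the compact operators such an escaping state arises as a weak$^*$ limit of $\omega_{e_n}$ along an orthonormal sequence, but when $\rho(N)$ contains no nonzero compact operator this soft averaging fails, and I expect that constructing the escaping state requires the quantitative Dixmier estimates of \cite{ART} (as already used in the proof of Theorem \ref{maxmixS}) together with Kadison transitivity; this is the step I expect to be the main obstacle.
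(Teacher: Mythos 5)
Your global strategy is sound: the reduction of the proposition to the statement ``every primitive ideal $M_0\notin X$ carries a pure state outside $S_\infty(A)$'' is correct, using the continuity and openness of $P(A)\to\mathrm{Prim}(A)$ together with the hypothesis that $S(A)\setminus S_\infty(A)$ is weak$^*$ open, and your treatment of the case where $M_0$ is maximal with $A/M_0\not\cong\C$ carrying a trace (non-tracial pure state via Krein--Milman, then Corollary \ref{MMsimplealgebra} (i) and Theorem \ref{MMextensions} (ii)) is complete. The genuine gap is exactly the case you flag as the crux: $M_0$ primitive but not maximal. You leave this case unresolved and predict it requires quantitative Dixmier estimates from \cite{ART}; in fact it has a soft solution, which is the heart of the paper's own proof, and your Milman-type reduction (eliminating the tracial alternative, forcing $J=0$) is a detour that can be discarded entirely.

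The missing lemma is: \emph{if $B$ is unital, primitive and non-simple, and $\bar\phi$ is a pure state of $B$ whose GNS representation $\pi_{\bar\phi}$ is faithful, then $D_B(\bar\phi)=S(B)$, and consequently $\bar\phi$ is not maximally mixed.} Indeed, by \cite[Corollary 3.4.3]{Dix} every pure state of $B$ is a weak$^*$ limit of vector states of $\pi_{\bar\phi}$; by the unitary form of Kadison's transitivity theorem \cite[Theorem 2.8.3 (iii)]{Dix} each such vector state is a unitary conjugate of $\bar\phi$; hence every pure state lies in $D_B(\bar\phi)$, and since convex combinations of pure states are weak$^*$ dense in $S(B)$, we get $D_B(\bar\phi)=S(B)$. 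Now if $\bar\phi$ were maximally mixed, then every $\psi\in D_B(\bar\phi)=S(B)$ would satisfy $D_B(\psi)=S(B)$; but for a nonzero proper closed ideal $K$ of $B$, a state $\psi$ factoring through $B/K$ has $D_B(\psi)$ contained in the states annihilating $K$, a proper Dixmier subset of $S(B)$ --- a contradiction. Note that this also finishes your own plan verbatim: the ``escaping state'' you sought inside $D_B(\omega_\xi)$ is simply any state factoring through $B/N$, since $D_B(\omega_\xi)$ is all of $S(B)$; no orthonormal-sequence averaging, no case distinction on compact operators, and no quantitative estimates are needed. (For comparison, the paper's proof is organized differently --- it takes an adherence point $N$ of $X$, uses \cite[Proposition 3.4.2 (i)]{Dix} and Theorem \ref{MMextensions} (ii) to see that all states of $A/N$ are maximally mixed, and then applies this same faithful-GNS argument to force $A/N$ to be simple --- but the lemma above is the key step there too, and it is precisely what your proposal lacks.)
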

\begin{proof}
We may assume that $X$ is non-empty. Let $J=\bigcap_{M\in X} M$.
Let $N\in \mathrm{Prim}(A)$ be an adherence point of $X$, i.e, $J\subseteq N$. Then every state on $A$ that factors through $A/N$
is a weak* limit of convex combinations of states that factor through $A/M$, with $M\in X$ (\cite[Proposition 3.4.2 (i)]{Dix}). Notice that $S_\infty(A/M)=S(A/M)$ for all $M\in X$. Thus, all the states of $A$ that factor through $A/M$, with $M\in X$, are maximally mixed.
It follows that  all states factoring through $A/N$ are maximally mixed, and so  all states of $A/N$ are maximally mixed by Theorem \ref{MMextensions} (ii).

Since $N$ is primitive, let $\phi\in S(A/N)$ be a pure state whose GNS representation $\pi_{\phi}$ is faithful.
Then any pure state $\psi$ on $A/N$ is a weak* limit of vector states (with respect to $\pi_{\phi}$) by \cite[Corollary 3.4.3]{Dix}.
By the unitary version of Kadison's Transitivity Theorem (\cite[Theorem 2.8.3 (iii)]{Dix}), each of these vector states is in fact unitarily equivalent to $\phi$, and thus $\psi$ is a weak* limit of unitary conjugates of $\phi$.
By approximating arbitrary states on $A/N$ by convex combinations of pure states, we find that $S(A/N) = D_{A/N}(\phi)$.
This implies that $A/N$ is simple, for otherwise the states factoring through a non-trivial quotient would form a proper Dixmier subset of $D_{A/N}(\phi)$ (recall that
$\phi$ is maximally mixed). From Corollary  \ref{MMsimplealgebra} we see that $A/N$ must either be isomorphic to $\C$ or without bounded traces. Thus, $N\in X$.
\end{proof}		

The examples below show that $S_\infty(A)$ may fail to be weak* closed.
	
\begin{example}\label{notweaklyclosed}
Fix a simple unital C*-algebra $B$ without bounded traces (e.g.,  the Cuntz algebra $\mathcal O_2$).
Let $A$ be the C*-subalgebra of $C([0,1],M_2(B))$ of functions $f$ such that $f(1)\in M_2(\C)\subseteq M_2(B)$.
For each $t\in [0,1]$ let $I_t=\{f\in A : f(t)=0\}$. Then $A/I_t\cong M_2(B)$ for all $0\leq t<1$.
So $I_t$ is a maximal ideal such that $A/I_t$ is simple without bounded traces. The maximal ideal $I_1$ is an adherence point of the set $\{I_t:0\leq t<1\}$. However, $A/I_1\cong M_2(\C)$ has a bounded trace and is not isomorphic to $\C$.  Thus, $S_\infty(A)$ is not weak* closed, by  Proposition \ref{MMweakstarclosed}.	
\end{example}

\begin{example}\label{notweaklyclosed2}	
	Again fix a simple unital C*-algebra $B$ without bounded traces.
	Let $A$ be the C*-subalgebra of $C(\{1,2,\ldots,\infty\},(B\otimes\mathcal K)^\sim)$ of $f$ such that $f(n)\in M_n(B)+\C1$ for all $n\in \N$, where we regard $M_n(B)$ embedded in $B\otimes \mathcal K$ as the top left corner. For each
	$n\in \N$ define
	$I_n=\{f\in A : e_nf(n)=0\}$, where $e_n$ is the unit of $M_n(B)$. Then $I_n$ is a maximal ideal for all $n=1,2,\ldots$ and $A/I_n\cong M_n(B)$ has no bounded traces.  Since $\bigcap_{n} I_n=\{0\}$, the set $\{I_n : n\in \N\}$ is dense in $\mathrm{Prim}(A)$. Consider the ideal $I_\infty=\{f : f(\infty)=0\}$.
Since  $A/I_\infty=(B\otimes \mathcal K)^\sim$ is a primitive C*-algebra, $I_\infty\in \mathrm{Prim}(A)$. But $I_\infty$ is not maximal. By Proposition \ref{MMweakstarclosed}, $S_\infty(A)$ is not weak* closed.

	If one wanted an algebra $A$ with no bounded traces in which $S_\infty(A)$ is not weak* closed, one can simply tensor the example just given with a nuclear, unital, simple, traceless C*-algebra (this operation does not change the ideal lattice, so the same obstruction applies).
\end{example}

\begin{question}\label{Qweakstarclosed}
Is the converse of Proposition \ref{MMweakstarclosed} true?	 That is, let $A$ be unital. Suppose that the set of maximal ideals $M$ such that $A/M$ either is isomorphic to $\C$ or has no bounded traces is a closed subset of $\mathrm{Prim}(A)$. Is $S_\infty(A)$ weak* closed?
\end{question}	

The previous question admits a reduction to the special case of characterizing when $S_\infty(A)$ is all of $S(A)$ (Remark \ref{Questionsequivalence} below).
Before explaining this, we point out the following result:
\begin{proposition}\label{allMM}
Let $A$ be a unital C*-algebra.  Let $X\subseteq \mathrm{Prim}(A)$ be as in Proposition \ref{MMweakstarclosed}.
The following are equivalent:
\begin{enumerate}
\item	
$X= \mathrm{Prim}(A)$;
\item
every pure state of $A$ is either multiplicative or type (B);
\item
$S_{\infty}(A)$ contains the norm-closed convex hull of the pure states.
\end{enumerate}
\end{proposition}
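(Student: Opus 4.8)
The plan is to prove the three conditions equivalent in the cycle (i) $\Rightarrow$ (ii) $\Rightarrow$ (iii) $\Rightarrow$ (i), the connecting thread being the standard correspondence between pure states and primitive ideals. To a pure state $\phi\in S(A)$ I attach the primitive ideal $P_\phi:=\ker\pi_\phi$ coming from its (irreducible) GNS representation; conversely every $P\in\mathrm{Prim}(A)$ equals $P_\phi$ for some pure $\phi$, namely any vector state of a faithful irreducible representation of $A/P$ pulled back to $A$. Before anything else I would record the dictionary translating (ii) into a statement about $A/P_\phi$: a pure state $\phi$ is \emph{multiplicative} iff $\pi_\phi$ is one-dimensional iff $A/P_\phi\cong\C$; and $\phi$ is \emph{type (B)} iff $A/P_\phi$ is simple and traceless. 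The forward direction of the latter is the one care point: if $\phi$ factors through a simple traceless $A/M$, then $M\subseteq\ker\pi_\phi=P_\phi$, and since $M$ is maximal while $P_\phi$ is proper, $M=P_\phi$, so $A/P_\phi=A/M$. I would also note two elementary facts used repeatedly: a multiplicative state is tracial, and a \emph{pure} tracial state must be multiplicative (an irreducible GNS representation of dimension $>1$ carries no tracial vector state), so that in the tracial case $A/P_\phi\cong\C$.

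For (i) $\Rightarrow$ (ii): given $X=\mathrm{Prim}(A)$ and a pure state $\phi$, the ideal $P_\phi$ lies in $X$, so $A/P_\phi\cong\C$ or $A/P_\phi$ is simple traceless; by the dictionary $\phi$ is multiplicative or type (B). For (ii) $\Rightarrow$ (iii): every pure state is now multiplicative (hence tracial) or type (B). A positive scalar multiple of a single pure state is then tracial or a type (B) positive functional, hence maximally mixed; adding the remaining terms of a convex combination one at a time and invoking Proposition \ref{ABstatesConvexity} at each step shows every convex combination of pure states is maximally mixed. Since the maximally mixed functionals form a norm-closed set (Theorem \ref{normclosed}), the norm-closed convex hull of the pure states lies in $S_\infty(A)$.

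The substantive implication is (iii) $\Rightarrow$ (i), and here I would reuse the mechanism from the proof of Proposition \ref{MMweakstarclosed}. Fix an arbitrary $P\in\mathrm{Prim}(A)$, choose a pure state $\phi$ with $P_\phi=P$ whose GNS representation is faithful upon passing to $B:=A/P$, and let $\bar\phi\in S(B)$ be the induced state. By (iii), $\phi$ is maximally mixed, and therefore so is $\bar\phi$ (Theorem \ref{MMextensions} (ii)). Now, since $B$ is primitive and $\pi_{\bar\phi}$ is faithful and irreducible, every pure state of $B$ is a weak* limit of vector states of $\pi_{\bar\phi}$ (\cite[Corollary 3.4.3]{Dix}), each of which is a unitary conjugate of $\bar\phi$ by the unitary Kadison transitivity theorem (\cite[Theorem 2.8.3 (iii)]{Dix}); approximating arbitrary states by convex combinations of pure ones yields $D_B(\bar\phi)=S(B)$. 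Minimality of $D_B(\bar\phi)$ forces $D_B(\psi)=S(B)$ for every $\psi\in S(B)$, so all states of $B$ are maximally mixed; in particular $B$ is simple, for otherwise the states vanishing on a proper nonzero ideal would form a proper Dixmier subset of some $D_B(\psi)$, contradicting $D_B(\psi)=S(B)$. Finally I apply Corollary \ref{MMsimplealgebra} to the simple unital algebra $B$: if $B$ had a bounded trace, all its (maximally mixed) states would be tracial, hence $B$ would be abelian and so $B\cong\C$; otherwise $B$ is traceless. In either case $P\in X$, and as $P$ was arbitrary we get $X=\mathrm{Prim}(A)$.

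The main obstacle is precisely this last implication, and within it the step of upgrading ``the single pure state $\bar\phi$ is maximally mixed'' to ``$D_B(\bar\phi)$ is all of $S(B)$,'' from which simplicity of $B=A/P$ follows. This is where the faithfulness of $\pi_{\bar\phi}$ and the transitivity theorem do the real work, exactly as in Proposition \ref{MMweakstarclosed}. The remaining points — verifying the pure-state/primitive-ideal dictionary (especially that pure type (B) states have maximal $P_\phi$) and that maximal mixedness transfers between $\phi$ and $\bar\phi$ across the quotient by $P$ — are routine and I would treat them briefly.
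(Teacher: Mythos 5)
Your proof is correct and follows essentially the same route as the paper's: the same pure-state/primitive-ideal dictionary for (i)$\Rightarrow$(ii), the same content for (ii)$\Rightarrow$(iii) (you unfold the first inclusion of Theorem \ref{maxmixS} into Proposition \ref{ABstatesConvexity} plus Theorem \ref{normclosed}, which is exactly how the paper proves it), and for (iii)$\Rightarrow$(i) the same reuse of the Kadison-transitivity argument from Proposition \ref{MMweakstarclosed} to get simplicity of $A/P$, followed by Corollary \ref{MMsimplealgebra}.
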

\begin{proof}
(i)$\Rightarrow$(ii): If $\phi$ is a pure state then $\phi$ factors through a primitive quotient $A/N$. By (i),
this quotient is simple and either isomorphic to $\C$	or without traces. In the first case $\phi$ is muliplicative and in the second case it is of type (B).

(ii)$\Rightarrow$(iii): This follows from Theorem \ref{maxmixS}.

(iii)$\Rightarrow$(i): Let $N\in \mathrm{Prim}(A)$ and choose a pure state $\phi\in S(A/N)$ whose GNS representation is faithful.
Then, since $\phi$ is maximally mixed, it follows as in the proof of Proposition \ref{MMweakstarclosed} that $A/N$ is simple.  Since every pure state of $A/N$ is maximally mixed, it follows from Corollary \ref{MMsimplealgebra} that $A/N$ must either be $\C$ or have no bounded traces.
Thus, $N\in X$.
\end{proof}	

\begin{question}\label{QallMM}
Let $A$ be a unital C*-algebra that satisfies the equivalent conditions of Proposition \ref{allMM}. Does it follow that $S_\infty(A)=S(A)$?
\end{question}

\begin{remark}\label{Questionsequivalence}
Questions \ref{Qweakstarclosed} and \ref{QallMM} are equivalent. For if Question \ref{Qweakstarclosed}
has been answered affirmatively and $A$ satisfies the equivalent conditions of Proposition \ref{allMM}, then $S_\infty(A)$
is weak* closed, and so by Proposition \ref{allMM} (iii)  $S_\infty(A)=S(A)$.  Suppose conversely that Question \ref{QallMM} has been answered
affirmatively.  Let $A$ be a unital C*-algebra. Let $X\subseteq \mathrm{Prim}(A)$ be as in Proposition \ref{MMweakstarclosed} and
suppose that $X$ is a closed set. If $X=\varnothing$ then, by Corollary \ref{onlytraces}, $S_{\infty}(A)=T(A)$ (the set of tracial states on $A$),  which is weak$^*$ closed.
Assume $X\neq \varnothing$. Let $J=\bigcap_{M\in X} M$. Then the C*-algebra $A/J$ satisfies the equivalent conditions in Proposition \ref{allMM}
(check (i)). Hence,  all of its states are maximally mixed. By Theorem \ref{MMextensions} (ii), $S(A)_J\subseteq S_\infty(A)$. Observe also that every state of $A$ of type (B) is in $S(A)_J$. Then,
\[
\mathrm{co}(T(A),S(A)_J)\subseteq S_\infty(A)\subseteq \overline{\mathrm{co}(T(A),S(A)_J)}^{\mathrm{weak}^*},
\]
where the first inclusion follows from Proposition \ref{ABstatesConvexity} and the second inclusion
from  Theorem \ref{maxmixS}.  But $\mathrm{co}(T(A),S(A)_J)$ is weak* closed. So  $S_\infty(A)=\mathrm{co}(T(A),S(A)_J)$, is weak* closed.
\end{remark}

In the next section we answer affirmatively Questions \ref{MMconvexquestion}, \ref{Qsumdixmierset}, and \ref{Qweakstarclosed} for C*-algebras with the Dixmier property.

\section{C*-algebras with the  Dixmier property}\label{DPcase}
In this section, we find further properties of the set of maximally mixed states in the case of C*-algebras with the Dixmier property (defined below).

Let $A$ be a unital C*-algebra, let $a\in A$ and let $\phi\in S(A)$.
Acting by conjugation, the unitary group $U(A) $ induces a group of isometric affine transformations of the convex set
$D_A(a)$, and similarly for $D_A(\phi)$. An element $z\in D_A(a)$ is a fixed point for the group of conjugations of $D_A(a)$ if and only if it belongs to the centre $Z(A)$. An element $\tau \in D_A(\phi)$ is a fixed point for the group of conjugations on $D_A(\phi)$ if and only if it is a tracial state, i.e., $\tau\in T(A)$.

The C*-algebra A is said to have the \emph{Dixmier property} if
$D_A(a) \cap Z(A)$ is non-empty for all $a\in A$, and $A$ is said to have the \emph{singleton Dixmier property} if $D_A(a)\cap Z(A)$ is a singleton set for all $a \in A$ (see \cite{ART} and the many references cited therein). On the other hand, we have seen in Corollary \ref{onlytraces} that $D_A(\phi) \cap T(A)$ is non-empty for all
$\phi\in S(A)$ if and only if every simple quotient of $A$ has a tracial state. Since a unital simple C*-algebra has the Dixmier property if and only if it has at most
one tracial state \cite{HZ}, we see that the Dixmier property is neither necessary nor sufficient for the equivalent properties of Corollary \ref{onlytraces} to hold.  Indeed, it is shown in \cite[Proposition 1.4]{ART} that $A$ has the Dixmier property and also satisfies the conditions of Corollary \ref{onlytraces} if and only if it has the singleton Dixmier property.

If a Dixmier set $D_A(a)$ does not contain a central element then there is no natural ``second prize'' at which to aim. In contrast, if a Dixmier set $D_A(\phi)$ does not contain  a tracial state then we may nevertheless study the maximally mixed states, which we have already seen to be guaranteed to exist in $D_A(\phi)$. In this section we study the maximally mixed states in the case where $A$ has the Dixmier property but not necessarily the singleton Dixmier property.

Henceforth in this section we assume that $A$ is a unital C*-algebra with the Dixmier property.

Let $\hat Z$ denote the spectrum of $Z(A)$. Since C*-algebras with the Dixmier property are weakly central (e.g., see \cite{ART}), we can identify $\hat Z$ with the set of maximal ideals of $A$. We denote the latter set by $\mathrm{Max}(A)$.

To analyze the maximally mixed states for such $A$, we will make frequent use of a description of $D_A(a) \cap Z(A)$ (for $a$ self-adjoint) found in \cite{ART} (see \cite[Corollary 4.5]{ART} and the discussion between Theorem 2.6 and Corollary 2.7 of \cite{ART}, with details in the proof of Theorem 2.6).
For $a \in A$ self-adjoint, define $f_a,g_a\colon \hat Z \to \R$ by
\[
f_a(M) := \begin{cases} \min \mathrm{sp}(q_M(a)), \quad &\text{if }A/M\text{ has no bounded traces}; \\ \tau_M(a),\quad &\text{otherwise}, \end{cases}
\]
where $\tau_M$ is the (necessarily unique) tracial state on $A$ which factors through $A/M$.
Likewise,
\begin{equation}
\label{g_aDef}
 g_a(M) := \begin{cases} \max \mathrm{sp}(q_M(a)), \quad &\text{if }A/M\text{ has no bounded traces}; \\ \tau_M(a),\quad &\text{otherwise}. \end{cases}
\end{equation}
Then $f_a$ is upper semicontinuous, $g_a$ is lower semicontinuous, $f_a \leq g_a$, and, identifying $Z(A)=C(\hat Z)$ now,
\[ D_A(a) \cap Z(A) = \{z \in C(\hat Z): z=z^* \text{ and } f_a \leq z \leq g_a\}. \]

Let us say that two maximally mixed bounded functionals $\phi$ and $\psi$ are equivalent if they generate the same Dixmier set,
i.e., $D_A(\phi)=D_A(\psi)$.

\begin{proposition}\label{measuresbijection}
Let $A$ be a unital C*-algebra with the Dixmier property. The equivalence classes of maximally  mixed, bounded functionals on $A$
are in bijective correspondence with the bounded functionals on the center of $A$. The correspondence
is given by the restriction map $\phi\mapsto \phi|_{Z(A)}$, for $\phi$ maximally mixed.
\end{proposition}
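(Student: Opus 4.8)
The plan is to show that the restriction map $\phi\mapsto\phi|_{Z(A)}$ is well defined on equivalence classes, surjective onto $Z(A)^*$, and injective. Writing $Z(A)=C(\hat Z)$ with $\hat Z=\mathrm{Max}(A)$, I would first reduce to states: scaling handles positive multiples, and Theorem~\ref{jordan} lets me pass between a functional and its (maximally mixed) self-adjoint, skew-adjoint, positive and negative parts, so the content is really about positive functionals. The one genuinely new analytic input I expect to need is the description of $D_A(a)\cap Z(A)$ via $f_a,g_a$ recalled above from \cite{ART}.

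Well-definedness and surjectivity are the easy ends. For any $\phi\in A^*$, any central $z$ and any $u\in U(A^\sim)$ one has $(u\phi u^*)(z)=\phi(uzu^*)=\phi(z)$; since convex combinations and point-weak$^*$ limits preserve this, every $\psi\in D_A(\phi)$ satisfies $\psi|_{Z(A)}=\phi|_{Z(A)}$. Thus the restriction is constant on each Dixmier set, so the map descends to equivalence classes. For surjectivity I would take an arbitrary $\lambda\in Z(A)^*$, extend it to some $\tilde\lambda\in A^*$ by a norm-preserving Hahn--Banach extension, and then pass to a maximally mixed $\phi\in D_A(\tilde\lambda)$ (which exists by Zorn's lemma); by the previous sentence $\phi|_{Z(A)}=\tilde\lambda|_{Z(A)}=\lambda$.

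The heart of the proof is injectivity, which I would recast through support functions. For self-adjoint $\phi$ the set $D_A(\phi)$ is a weak$^*$-compact convex subset of $A^*_{\sa}$ and is therefore determined by $h_\phi(a):=\max\{\psi(a):\psi\in D_A(\phi)\}=\sup\{\phi(b):b\in D_A(a)\}$ (using Lemma~\ref{DphiphiD}) for $a\in A_{\sa}$. So injectivity follows once I prove that, for a maximally mixed state $\phi$ with $\mu:=\phi|_{Z(A)}$,
\[ h_\phi(a)=\int_{\hat Z} g_a\,d\mu\qquad(a\in A_{\sa}), \]
since the right-hand side depends only on $\mu$. One inequality is soft: every central $z$ with $f_a\le z\le g_a$ lies in $D_A(a)$ and satisfies $\phi(z)=\int z\,d\mu$, and letting $z\nearrow g_a$ (possible since $g_a$ is lower semicontinuous) gives $h_\phi(a)\ge\int g_a\,d\mu$; this step uses only the Dixmier property. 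Because $g_b\le g_a$ whenever $b\in D_A(a)$ (mixing does not increase the fibrewise maximum of the spectrum, and leaves the fibrewise traces unchanged), the reverse inequality reduces to the single

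\textbf{Key Claim.} If $\phi$ is a maximally mixed state and $\mu=\phi|_{Z(A)}$, then $\phi(c)\le\int g_c\,d\mu$ for every $c\in A_{\sa}$.

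The Key Claim is where maximal mixedness is indispensable: it fails for, e.g., a pure state of $M_n(\C)$, whose support function reaches $\max\mathrm{sp}$ rather than the tracial value $g_c=\tau(c)$. I expect this to be the main obstacle, and my approach would be fibrewise. Regarding $A$ as a $C(\hat Z)$-algebra, I would disintegrate $\phi$ over $\mu$ as $\phi=\int_{\hat Z}\phi_M\,d\mu(M)$ with each $\phi_M$ a state factoring through the (simple, by weak centrality) quotient $A/M$. The crucial point is that maximal mixedness passes to the fibres: for $\mu$-almost every $M$ the state $\phi_M$ is maximally mixed on $A/M$, so by Corollary~\ref{MMsimplealgebra} it is either the unique trace $\tau_M$ (when $A/M$ has a bounded trace) or an arbitrary state (when $A/M$ is traceless); in both cases $\phi_M(c)\le g_c(M)$, and integrating against $\mu$ yields the Key Claim. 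The genuine difficulties here are the existence of the central disintegration and the $\mu$-measurable descent of maximal mixedness to the fibres, especially outside the separable setting; this is the step I would expect to have to argue most carefully, or to isolate as a separate disintegration lemma.

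Finally I would reassemble. With the Key Claim in hand, $h_\phi=h_\psi$ whenever $\phi,\psi$ are maximally mixed states with $\phi|_{Z(A)}=\psi|_{Z(A)}$, so $D_A(\phi)=D_A(\psi)$ and injectivity holds on states. The same fibrewise picture treats a maximally mixed self-adjoint $\phi$: on each fibre only one of $\phi_\pm$ survives (again by Corollary~\ref{MMsimplealgebra}), so $\phi_+|_{Z(A)}$ and $\phi_-|_{Z(A)}$ are mutually singular and hence equal the Jordan parts $\mu_\pm$ of $\mu=\phi|_{Z(A)}$; the analogous computation gives $h_\phi(a)=\int g_a\,d\mu_+-\int f_a\,d\mu_-$, again determined by $\mu$. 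Combined with Theorem~\ref{jordan} (for the skew-adjoint part) and scaling, this promotes injectivity and surjectivity from states to all bounded functionals, completing the bijection.
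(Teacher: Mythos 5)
Your well-definedness and surjectivity arguments are fine and essentially coincide with the paper's (the paper obtains a maximally mixed extension of a given $\mu\in Z(A)^*$ by observing that the extensions of $\mu$ form a weak* compact Dixmier set; your route through a Hahn--Banach extension $\tilde\lambda$ and a maximally mixed element of $D_A(\tilde\lambda)$ is the same idea). The genuine problem is injectivity, where your argument has two unproven steps, both concentrated in the Key Claim. First, the central disintegration $\phi=\int_{\hat Z}\phi_M\,d\mu(M)$ with each $\phi_M$ factoring through $A/M$ is asserted, not proved; such decompositions are delicate even for separable $A$, and the proposition concerns arbitrary unital C*-algebras with the Dixmier property. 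Second, and more seriously, the assertion that maximal mixedness of $\phi$ forces $\phi_M$ to be maximally mixed on $A/M$ for $\mu$-almost every $M$ is precisely the hard content of the whole result: it localizes a global mixing condition to the fibres, and you give no mechanism for it (one would have to show that if $\phi_M$ differs from the fibre trace on a set of positive measure, then mixing operators can strictly shrink $D_A(\phi)$ --- a measurable-selection construction that is nowhere sketched). You also cannot shortcut this by citing Theorem \ref{DPcriterion}: in the paper, the implication (ii)$\Rightarrow$(i) of that theorem is itself proved \emph{using} Proposition \ref{measuresbijection}, so that route is circular. Finally, the promotion from states to all bounded functionals leans on the same unproven fibrewise picture, and is itself delicate: by Example \ref{JordanConverseFails}, $D_A(\phi)$ is not determined in any simple way by $D_A(\phi_+)$ and $D_A(\phi_-)$, so Theorem \ref{jordan} alone does not reduce injectivity to the positive case.

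What your proposal misses is that the Dixmier property makes injectivity nearly trivial, with no need for the $f_a,g_a$ description at all. Suppose $\phi,\psi$ are maximally mixed, agree on $Z(A)$, and $D_A(\phi)\neq D_A(\psi)$. These two sets are then \emph{disjoint}: any common point $\rho$ would satisfy $D_A(\rho)=D_A(\phi)$ and $D_A(\rho)=D_A(\psi)$ by maximal mixedness. Hahn--Banach separation of these weak* compact convex sets yields $a\in A$ and $t_1<t_2$ with $\mathrm{Re}\,\phi'(a)\leq t_1$ for all $\phi'\in D_A(\phi)$ and $\mathrm{Re}\,\psi'(a)\geq t_2$ for all $\psi'\in D_A(\psi)$; Lemma \ref{DphiphiD} converts this to $\mathrm{Re}\,\phi(a')\leq t_1<t_2\leq \mathrm{Re}\,\psi(a')$ for every $a'\in D_A(a)$. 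Taking $a'\in D_A(a)\cap Z(A)$ --- nonempty precisely because $A$ has the Dixmier property --- contradicts $\phi|_{Z(A)}=\psi|_{Z(A)}$. This argument applies directly to arbitrary complex bounded functionals, so no Jordan-decomposition reduction, no disintegration, and no measure theory are needed.
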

\begin{proof}
Any two equivalent functionals agree on the center, so the mapping is well defined on equivalence classes.
To see that it is onto, fix a functional $\mu\in Z(A)^*$. The set of all $\phi\in A^*$  whose restriction
to $Z(A)$  is $\mu$ is a weak* compact Dixmier set. It thus must  contain   maximally mixed functionals.

Let us now show that the mapping is injective. Let $\phi,\psi \in A^*$  be two maximally mixed  functionals that agree on $Z(A)$. Suppose for a contradiction that
$D_A(\phi)\neq D_A(\psi)$. Then
$D_A(\phi)$ and $D_A(\psi)$  are  disjoint. By the Hahn-Banach theorem, we can find $a\in A$ and real numbers $t_1<t_2$
such that $\mathrm{Re}(\phi'(a))\leq t_1$ for all $\phi'\in D_A(\phi)$ and $\mathrm {Re}(\psi'(a))\geq t_2$ for all $\psi'\in D_A(\psi)$. By Lemma \ref{DphiphiD},
$\mathrm{Re}(\phi(a'))\leq t_1$ and $\mathrm{Re}(\psi(a'))\geq t_2$ for all $a'\in D_A(a)$. This holds in particular for $a'\in D_A(a)\cap Z(A)$.
This contradicts that $\phi$ and $\psi$ agree on $Z(A)$.
\end{proof}

\begin{remark} The previous proposition implies that if $A$ has the Dixmier property then  $D_A(\phi)$, for $\phi\in S(A)$, contains a unique equivalence class
of maximally mixed states; namely, the maximally mixed  states that agree with $\phi$  on $Z(A)$. This is in general not true
for C*-algebras without the Dixmier property. Take for example  $A$ to be a  simple unital C*-algebra with at least two tracial states and let $\phi$ be a pure state of $A$. Then
$D_A(\phi)$ is the set of all states, so it contains distinct tracial states (which are inequivalent maximally mixed states).
\end{remark}

We need the following little lemma in the proceeding theorem.

\begin{lemma}\label{Regularity}
Let $X$ be a Hausdorff topological space, let $\mu$ be a Radon probability measure on $X$, and let $f\colon X \to \R$ be a bounded lower semicontinuous function.
Then
\[ \int_X f\,d\mu = \sup \int_X g\,d\mu, \]
where the supremum is taken over upper semicontinuous functions $g\colon X \to \R$ which are (pointwise) dominated by $f$.
\end{lemma}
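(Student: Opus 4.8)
The plan is to prove the two inequalities $\int_X f\,d\mu \geq \sup \int_X g\,d\mu$ and $\int_X f\,d\mu \leq \sup \int_X g\,d\mu$ separately. The first is immediate: if $g \leq f$ pointwise, then monotonicity of the integral gives $\int_X g\,d\mu \leq \int_X f\,d\mu$, and taking the supremum over all such $g$ yields the inequality. The content is entirely in the reverse direction, where one must produce upper semicontinuous minorants $g$ whose integrals approximate $\int_X f\,d\mu$ from below.

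For the reverse inequality, the natural approach exploits regularity of the Radon measure $\mu$ together with the defining open-set structure of lower semicontinuity. Recall that a bounded lower semicontinuous $f$ is a pointwise increasing limit of functions built from indicator functions of open sets; more concretely, after an affine shift I may assume $0 \leq f \leq 1$, and then $f = \int_0^1 \mathbf{1}_{\{f > s\}}\,ds$, where each superlevel set $U_s := \{f > s\}$ is open. By Fubini's theorem, $\int_X f\,d\mu = \int_0^1 \mu(U_s)\,ds$. Now I invoke inner regularity of the Radon measure $\mu$: for each $s$ and each $\epsilon > 0$ there is a compact set $K_s \subseteq U_s$ with $\mu(U_s) - \mu(K_s) < \epsilon$. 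Since $K_s$ is compact and contained in the open set $U_s$, the indicator $\mathbf{1}_{K_s}$ is upper semicontinuous and dominated by $\mathbf{1}_{U_s} \leq$ (the relevant piece of) $f$. The strategy is to assemble a suitable upper semicontinuous $g$ out of such compact indicators so that $\int_X g\,d\mu$ recovers $\int_X f\,d\mu$ up to $\epsilon$.

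The main obstacle is that one cannot simply integrate the family $\mathbf{1}_{K_s}$ over $s$ to get a single upper semicontinuous function: an integral (or supremum) of upper semicontinuous functions need not be upper semicontinuous. The cleanest remedy is to discretize the layer-cake. Fix $n$ and set $s_k = k/n$ for $0 \leq k \leq n$; for each $k$ choose, by inner regularity, a compact $K_k \subseteq U_{s_k}$ with $\mu(U_{s_k}\setminus K_k)$ small, and define the step-type minorant $g_n := \tfrac{1}{n}\sum_{k=1}^{n} \mathbf{1}_{K_k}$. A finite sum of upper semicontinuous functions is upper semicontinuous, and since $K_k \subseteq U_{s_k} = \{f > s_k\}$ one checks directly that $g_n \leq f$ pointwise. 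Its integral is $\tfrac{1}{n}\sum_{k=1}^n \mu(K_k)$, which approximates $\tfrac{1}{n}\sum_{k=1}^n \mu(U_{s_k})$, a Riemann sum for $\int_0^1 \mu(U_s)\,ds = \int_X f\,d\mu$; choosing the regularity errors summably small and letting $n \to \infty$ forces $\int_X g_n\,d\mu \to \int_X f\,d\mu$. This yields the supremum in the reverse direction and completes the proof.
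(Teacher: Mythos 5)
Your proof is correct and follows essentially the same route as the paper: both arguments discretize $f$ via its open superlevel sets $\{f>k/n\}$ (the paper phrases this as uniform approximation by simple lower semicontinuous functions, you phrase it via the layer-cake formula and Riemann sums), then apply inner regularity of $\mu$ to replace each open set by a compact subset, using that a finite sum of upper semicontinuous indicators $\chi_K$ is upper semicontinuous. The only cosmetic difference is that the paper reduces to the single case $f=\chi_U$ and you carry out the finite sum explicitly; note that your Riemann-sum step is justified because $s\mapsto\mu(\{f>s\})$ is monotone, hence Riemann integrable.
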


\begin{proof}
Without loss of generality, $f \geq 0$.
We may approximate $f$ uniformly by simple lower semicontinuous functions, i.e., positive scalar linear combinations of characteristic functions of open sets.
Thus, it suffices to handle the case that $f$ is the characteristic function of an open set, say $f=\chi_U$.

In this case, since $\mu$ is inner regular, $\mu(X)$ is the supremum of measures of compact sets $K$ contained in $U$, so
\begin{align*}
\int_X f\,d\mu &= \mu(X) \\
&= \sup_K \mu(K) \\
&= \sup_K \int_X \chi_K\,d\mu,
\end{align*}
where the suprema are taken over compact sets contained in $U$; but now we are done, since each $\chi_K$ is upper semicontinuous.
\end{proof}

\begin{theorem}\label{DPcriterion}
Let $A$ be a unital C*-algebra with the Dixmier property. Let $\phi\in A_+^*$.  The following are equivalent:
\begin{enumerate}
\item
$\phi$ satisfies that
\begin{equation}\label{criterion}
\phi(a)\leq \sup \{\phi(z): z\in D_A(a)\cap Z(A)\}\qquad (a\in A_+).
\end{equation}
\item
$\phi$ is maximally mixed.
\end{enumerate}
\end{theorem}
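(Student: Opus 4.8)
The engine of the proof is the identity
\[
\sup\{\psi(z):z\in D_A(a)\cap Z(A)\}=\int_{\hat Z} g_a\,d\mu\qquad(a\in A_{\sa}),
\]
valid for every positive $\psi\in A^*$ with associated measure $\mu:=\psi|_{Z(A)}$ on $\hat Z=\mathrm{Max}(A)$ (identifying $Z(A)=C(\hat Z)$ and using $D_A(a)\cap Z(A)=\{z\in C(\hat Z):z=z^*,\ f_a\le z\le g_a\}$). Granting this, condition \eqref{criterion} is equivalent to $\phi(a)\le\int g_a\,d\mu$ for all $a\in A_{\sa}$, the passage between $A_+$ and $A_{\sa}$ being by translation by scalars, which are central. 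I will use throughout that every $\psi\in D_A(\phi)$ satisfies $\psi|_{Z(A)}=\mu$ (mixing operators fix central elements, and $\psi\ge0$), and that by Lemma \ref{DphiphiD}, $\max\{\rho(a):\rho\in D_A(\psi)\}=\sup\psi(D_A(a))$ for self-adjoint $a$.

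For (i)$\Rightarrow$(ii) I argue directly, using only the inclusion $D_A(a)\cap Z(A)\subseteq D_A(a)$ (not the identity itself). Assume $\phi$ satisfies \eqref{criterion} and let $\psi\in D_A(\phi)$, so $\psi\ge0$ and $\psi|_{Z(A)}=\mu$. For $a\in A_+$,
\[
\phi(a)\le\sup\{\phi(z):z\in D_A(a)\cap Z(A)\}=\sup\{\psi(z):z\in D_A(a)\cap Z(A)\}\le\sup\psi(D_A(a)),
\]
the equality holding because $\phi$ and $\psi$ agree on $Z(A)$. Translating by central scalars, $\phi(a)\le\sup\psi(D_A(a))=\max\{\rho(a):\rho\in D_A(\psi)\}$ for all $a\in A_{\sa}$, so the Hahn--Banach separation theorem gives $\phi\in D_A(\psi)$, whence $D_A(\psi)=D_A(\phi)$ and $\phi$ is maximally mixed.

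For (ii)$\Rightarrow$(i) I manufacture a maximally mixed functional realizing $\mu$ and invoke uniqueness. Set $p(a):=\int_{\hat Z}g_a\,d\mu$ for $a\in A_{\sa}$; the formula for $g_a$ makes $p$ sublinear, since $g_{a+b}\le g_a+g_b$ (subadditivity of $\max\mathrm{sp}$ on the traceless fibres, additivity of $\tau_M$ on the tracial ones) and $g_{\lambda a}=\lambda g_a$ for $\lambda\ge0$. By Hahn--Banach there is a real-linear $\phi_1\le p$ on $A_{\sa}$; extending it to a self-adjoint functional on $A$ and testing against $\pm a$ gives $\int f_a\,d\mu\le\phi_1(a)\le\int g_a\,d\mu$ (using $g_{-a}=-f_a$), whence $\phi_1\ge0$ (as $f_a\ge0$ for $a\ge0$) and $\phi_1|_{Z(A)}=\mu$ (as $f_z=g_z=z$ for central $z$). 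Since $\phi_1(a)\le p(a)$ is, by the identity, exactly condition \eqref{criterion}, the case already proved shows $\phi_1$ is maximally mixed. Now let $\phi$ be any maximally mixed functional and put $\mu=\phi|_{Z(A)}$: Proposition \ref{measuresbijection} forces $D_A(\phi)=D_A(\phi_1)$, so $\phi\in D_A(\phi_1)$ and thus $\phi(a)\le\sup\phi_1(D_A(a))$ for self-adjoint $a$. Finally $g_b\le g_a$ whenever $b\in D_A(a)$ (on traceless fibres $q_M(b)\in D_{A/M}(q_M(a))$ caps the top of the spectrum, while $\tau_M$ is constant on $D_A(a)$), so $\phi_1(b)\le\int g_b\,d\mu\le\int g_a\,d\mu$ for all $b\in D_A(a)$; hence $\phi(a)\le\sup\phi_1(D_A(a))\le\int g_a\,d\mu$, which is \eqref{criterion}.

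The crux, and the step I expect to be hardest, is the identity stated at the outset; its nontrivial half is $\sup\{\psi(z):z\in D_A(a)\cap Z(A)\}\ge\int g_a\,d\mu$, i.e.\ approximating the lower-semicontinuous $g_a$ from below in $\mu$-integral by \emph{continuous} functions that still dominate the upper-semicontinuous $f_a$. This is where Lemma \ref{Regularity} enters: its inner-regularity argument, run with Urysohn functions in place of characteristic functions of compacta, produces a continuous $h\le g_a$ with $\int h\,d\mu$ within $\epsilon$ of $\int g_a\,d\mu$; choosing, by the Kat\v{e}tov--Tong insertion theorem, a continuous $w_0$ with $f_a\le w_0\le g_a$, the function $\max(h,w_0)$ is a genuine element of $D_A(a)\cap Z(A)$ with integral within $\epsilon$ of $\int g_a\,d\mu$. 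The remaining structural verifications --- sublinearity of $p$, the relation $g_{-a}=-f_a$, and the monotonicity $g_b\le g_a$ --- are routine consequences of the description of $g_a$ over the tracial and traceless parts of $\mathrm{Max}(A)$ from \cite{ART}.
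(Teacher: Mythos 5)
Your proof is correct, and its overall strategy is the paper's: (i)$\Rightarrow$(ii) is the same Hahn--Banach separation argument exploiting that every element of $D_A(\phi)$ agrees with $\phi$ on $Z(A)$, and (ii)$\Rightarrow$(i) reduces, via Proposition \ref{measuresbijection}, to producing for each central measure $\mu$ one functional satisfying \eqref{criterion}, with Lemma \ref{Regularity} plus Kat\v{e}tov--Tong insertion supplying the key analytic estimate. Where you genuinely depart from the paper is the Hahn--Banach step, and your version is simpler. The paper dominates its extension by the \emph{seminorm} $p_\mu(a)=\int g_{|a|}\,d\mu$, whose triangle inequality forces it to prove $\tau_M(|a+b|)\leq\tau_M(|a|)+\tau_M(|b|)$ on the tracial fibres --- a nontrivial step carried out with a projection in $A^{**}$. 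Your \emph{sublinear} functional $p(a)=\int g_a\,d\mu$ avoids absolute values entirely: subadditivity is immediate (additivity of $\tau_M$ on tracial fibres, $x\leq \max\mathrm{sp}(x)\cdot 1$ on traceless ones), and the asymmetry pays off because testing against $\pm a$ sandwiches $\phi_1$ between $\int f_a\,d\mu$ and $\int g_a\,d\mu$, which delivers positivity of $\phi_1$ and the identity $\phi_1|_{Z(A)}=\mu$ for free (the paper instead extracts these from $\phi(1)=1$ and $\|\phi\|\leq 1$). Your remaining deviations are interchangeable with the paper's: transferring \eqref{criterion} from $\phi_1$ to the given $\phi$ via the monotonicity $g_b\leq g_a$ for $b\in D_A(a)$ plays exactly the role of the paper's opening step of (ii)$\Rightarrow$(i) (invariance of the criterion under equivalence, proved with mixing operators and $D_A(T_i a)\subseteq D_A(a)$), both resting on the fact that Dixmier sets shrink under mixing; likewise your regularity variant (continuous Urysohn minorants of $g_a$, then $\max(h,w_0)$) is sound and equivalent to the paper's (u.s.c.\ minorant $w$, then a continuous function inserted above $\max(f_a,w)$).
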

\begin{proof}
(i)$\Rightarrow$(ii).
Suppose for a contradiction that there exists
$\psi\in D_A(\phi)$ such that $\phi\notin D_A(\psi)$.
Then there exist a  self-adjoint element $a$ and $t\in \R$  separating $D_A(\psi)$ and $\phi$. That is, $\psi'(a)\leq t$ for all $\psi'\in D_A(\psi)$ and $\phi(a)>t$.
Translating $a$ by a scalar multiple of the unit we may assume that it is positive. By Lemma \ref{DphiphiD}, we get that $\psi(a')\leq t$  for all $a'\in D_A(a)$.
From  $\psi\in D_A(\phi)$ we deduce that  $\psi(a')=\phi(a')$ for all $a'\in Z(A)$. Hence
\begin{align*}
\phi(a)&\leq \sup\{ \phi(a') : a'\in D_A(a)\cap Z(A)\}\\
&=\sup\{ \psi(a') : a'\in D_A(a)\cap Z(A)\}\leq t.
\end{align*}
This contradicts that $\phi(a)>t$.

(ii)$\Rightarrow$(i). We may assume that $\phi\neq 0$ and then, multiplying it by a scalar, that it is a state. First, let us  show  that if a maximally mixed state $\phi$ satisfies \eqref{criterion} then so do all the states equivalent to it.
Let $\phi$ be a state that satisfies \eqref{criterion}  and let  $\psi\in D_A(\phi)$. Say $\psi=\lim_i\phi\circ T_i$, where $(T_i)_i$ is a net of mixing operators in $\M(A)$.
Let $a \in A_+$.
Since $D_A(T_i a)\subseteq D_A(a)$,
\begin{align*}
\phi(T_i a) &\leq \sup \{\phi(z): z\in D_A(T_i a)\cap Z(A)\}\\
&\leq \sup \{\phi(z): z\in D_A(a)\cap Z(A)\}.
\end{align*}
Hence
\begin{align*}
\psi(a) &=\lim_i \phi(T_i\cdot a)\\
&\leq \sup \{\phi(z): z\in D_A(a)\cap Z(A)\}\\
&=\sup \{\psi(z): z\in D_A(a)\cap Z(A)\},
\end{align*}
where the last equality is valid since $\phi$ and $\psi$ agree on $Z(A)$.

By Proposition \ref{measuresbijection},  it now suffices to show that every probability (Radon) measure  $\mu$
on the center can be extended to a state $\phi$ on $A$  satisfying \eqref{criterion}. We do this next.

For each $a\in A_{\sa}$ let us define $p_\mu(a)\in [0,\infty)$ by
\[
p_\mu(a):=\int_{\widehat Z}g_{|a|}(M)\,d\mu(M),
\]
where $g_{|a|}\colon \widehat{Z}\to [0,\infty)$ is the lower semicontinuous function on the spectrum of the center associated to  $|a|$ (as in \eqref{g_aDef} with $|a|$ in place of $a$). Let us show that $p_\mu$ is a seminorm. Clearly $p_\mu (t a)=|t|p_\mu(a)$ for any $t\in \R$. To prove the triangle inequality 
it suffices to show that $g_{|a+b|}\leq g_{|a|}+g_{|b|}$ for all $a,b\in A_{\sa}$. Let us evaluate both sides of this inequality on an ideal $M\in \mathrm{Max}(A)$ such that $A/M$ has no bounded traces. Set $\bar a=q_M(a)$ and $\bar b=q_M(b)$ (the images of $a$ and $b$ in $A/M$).
Then we must show that $\||\bar a+\bar b|\|\leq \||\bar a|\|+\||\bar b|\|$. But this is clear from the triangle inequality for $\|\cdot\|$ and the fact that the norm of an element is equal to the norm of its absolute value. Suppose now that $M$ is such that $A/M$ has bounded traces. Let $\tau_M$ be the unique tracial state on $A$  factoring through $A/M$. Then we must show that 
\begin{align}\label{tautriangle}
\tau_M(|a+b|)\leq \tau_M(|a|)+\tau_M(|b|).
\end{align} 
Let $p\in A^{**}$ be a projection such that 
$p(a+b)p=(a+b)_+$.
Multiplying by $p$ on the left and on the right of $a+b\leq   a_+ + b_+$ we get
$
(a+b)_+\leq p  a_+p+p  b_+p
$.
Evaluating $\tau_M$ (extended to a normal trace on $A^{**}$) on both sides we get 
\[
\tau_M((a+b)_+)\leq \tau_M (a_+)+\tau_M(b_+).
\]
The same inequality, applied to $-a$ and $-b$, yields that
\[
\tau_M((a+b)_-)\leq \tau_M (a_-)+\tau_M(b_-).
\]
Now adding both inequalities we get \eqref{tautriangle}, as desired. 
Thus, $p_\mu$ is a seminorm. Since $g_{|a|} \leq \|a\|$, we  also have that $p_\mu(a)\leq \|a\|$ for all $a\in A_{\sa}$.

For any self-adjoint central element $z$ we have that
\[
\Big|\int z(M)\,d\mu(M)\Big|\leq \int|z(M)|\,d\mu(M)=p_\mu(z).
\]
So we can extend $\mu$ by the Hahn-Banach extension theorem to a self-adjoint functional $\phi$ on $A$ such that
\[
|\phi(a)|\leq p_\mu(a)\qquad (a\in A_{\sa}).
\]
Notice that $\phi(1)=1$ and that $\|\phi\|\leq 1$, since $p_\mu(a)\leq \|a\|$ for all $a\in A_{\sa}$.
Hence, $\phi$ is a state.

 Let $a\in A_+$. To establish \eqref{criterion}, we will show that $p_\mu(a)$ is dominated by the right-hand side of \eqref{criterion} (though we don't need it, in fact this implies that these two quantities are equal, as the reverse inequality is straightforward).
Let $\epsilon>0$. Since $g_a$ is lower semicontinuous, it follows from Lemma \ref{Regularity} that we may find an upper semicontinuous function $w \in C(\hat Z)$ such that $w \leq g_a$ and $\int w(M)\,d\mu(M) > \int g_a(M)\,d\mu(M)-\epsilon$.
By the Katetev-Tong insertion theorem, we may find a continuous function $z_0 \in C(\hat Z)_+$ such that
\[ \begin{array}{c} f_a \\ w \end{array} \leq z_0 \leq g_a, \]
and therefore
\[ \int z_0(M)\,d\mu(M) \geq \int w(M)\,d\mu(M) > \int g_a(M)\,d\mu(M)-\epsilon = p_\mu(a)-\epsilon. \]
Thus
\[ p_\mu(a) \leq \sup \{\phi(z): z\in D_A(a)\cap Z(A)\}, \]
as required.
\end{proof}

\begin{corollary}
\label{DPconvex}
Let $A$ be a  unital C*-algebra with the Dixmier property.
\begin{enumerate}
\item
$S_\infty(A)$ is convex and weakly closed.	
\item
We have  $D_A(\phi+\psi)=D_A(\phi)+D_A(\psi)$ for all   $\phi,\psi\in A_+^*$ maximally mixed.
\end{enumerate}
\end{corollary}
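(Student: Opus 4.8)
The plan is to leverage the characterization of maximal mixedness furnished by Theorem \ref{DPcriterion}, after rewriting the right-hand side of \eqref{criterion} as a quantity that is \emph{linear} (and not merely convex) in $\phi$. For $\phi \in A_+^*$ let $\mu_\phi$ denote the finite positive Radon measure on $\widehat Z$ representing the positive functional $\phi|_{Z(A)}$ on $Z(A) = C(\widehat Z)$. First I would record that for every $a \in A_+$,
\[
\sup\{\phi(z): z\in D_A(a)\cap Z(A)\} = \int_{\widehat Z} g_a\,d\mu_\phi.
\]
The inequality ``$\leq$'' is immediate, since every $z\in D_A(a)\cap Z(A)$ satisfies $z \leq g_a$ and $\phi$ is positive; the reverse inequality is exactly what the Katetov--Tong argument in the proof of Theorem \ref{DPcriterion} delivers (there $\int_{\widehat Z} g_a\,d\mu_\phi = p_{\mu_\phi}(a)$ for $a\geq 0$, and $p_{\mu_\phi}(a)$ is shown to be dominated by the supremum). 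Thus Theorem \ref{DPcriterion} now reads: a state $\phi$ is maximally mixed if and only if $\phi(a) \leq \int_{\widehat Z} g_a\,d\mu_\phi$ for all $a\in A_+$. The essential gain is that $\phi\mapsto \int_{\widehat Z} g_a\,d\mu_\phi$ is linear in $\phi$, since $\phi\mapsto\mu_\phi$ is linear and integration against a fixed $g_a$ is linear.

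For part (i), convexity is then immediate: for each fixed $a\in A_+$ the condition $\phi(a)-\int_{\widehat Z}g_a\,d\mu_\phi \leq 0$ is a linear inequality in $\phi$, so its solution set is convex, and $S_\infty(A)$ is the intersection of these halfspaces with the convex set $S(A)$. I would emphasize that this is exactly where passing from the supremum to the integral matters: the supremum is only convex, not affine, in $\phi$, which on its own would give no convexity. For weak closedness, I would note that $g_a$ is a bounded Borel function on $\widehat Z$ and therefore defines, by integration, a bounded functional on $Z(A)^*=C(\widehat Z)^*$, i.e.\ an element of $Z(A)^{**}$; consequently $\mu\mapsto\int_{\widehat Z}g_a\,d\mu$ is continuous for $\sigma(Z(A)^*,Z(A)^{**})$. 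Composing with the norm-bounded, hence weakly continuous, restriction map $A^*\to Z(A)^*$, $\phi\mapsto\phi|_{Z(A)}$, shows that $\phi\mapsto\int_{\widehat Z}g_a\,d\mu_\phi$ is $\sigma(A^*,A^{**})$-continuous. Since $\phi\mapsto\phi(a)$ is weak* and hence weakly continuous, each halfspace is weakly closed; intersecting with $S(A)$ (convex and norm-closed, hence weakly closed) yields that $S_\infty(A)$ is weakly closed. The failure of $g_a$ to be continuous -- it is only lower semicontinuous -- is precisely why this argument gives weak rather than weak* closedness, consistent with Examples \ref{notweaklyclosed} and \ref{notweaklyclosed2}.

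For part (ii), I would first describe $D_A(\phi)$ for a maximally mixed $\phi$. Every mixing operator fixes central elements (for $z\in Z(A)$ and $T\in\M(A^*)$ one has $(T\phi)(z)=\phi(z)$, and this persists for $T\in\MM(A^*)$ by weak* limits), so each $\psi\in D_A(\phi)$ agrees with $\phi$ on $Z(A)$ and is itself maximally mixed; conversely, by the injectivity in Proposition \ref{measuresbijection}, any maximally mixed $\psi$ with $\psi|_{Z(A)}=\phi|_{Z(A)}$ generates the same Dixmier set and hence lies in $D_A(\phi)$. Therefore
\[
D_A(\phi) = \{\psi\in A_+^* : \psi \text{ is maximally mixed and } \psi|_{Z(A)}=\phi|_{Z(A)}\}.
\]
The inclusion $D_A(\phi+\psi)\subseteq D_A(\phi)+D_A(\psi)$ holds in general: by Lemma \ref{MMoperators} any $\rho\in D_A(\phi+\psi)$ is $\rho=T(\phi+\psi)=T\phi+T\psi$ for some $T\in\MM(A^*)$, with $T\phi\in D_A(\phi)$ and $T\psi\in D_A(\psi)$. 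For the reverse inclusion, take $\phi'\in D_A(\phi)$ and $\psi'\in D_A(\psi)$; both are maximally mixed and positive, with $\phi'|_{Z(A)}=\phi|_{Z(A)}$ and $\psi'|_{Z(A)}=\psi|_{Z(A)}$. Using part (i) together with the scale-invariance of maximal mixedness and the additivity of the norm on $A_+^*$ (so that $(\phi'+\psi')/\|\phi'+\psi'\|$ is a convex combination of the maximally mixed states $\phi'/\|\phi'\|$ and $\psi'/\|\psi'\|$, the degenerate cases being trivial), the sum $\phi'+\psi'$ is maximally mixed. Since $(\phi'+\psi')|_{Z(A)}=(\phi+\psi)|_{Z(A)}$, the description of $D_A(\phi+\psi)$ above gives $\phi'+\psi'\in D_A(\phi+\psi)$.

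I expect the main obstacle to be the reformulation in the first paragraph: recognizing that the supremum in \eqref{criterion} must be rewritten as the integral $\int_{\widehat Z} g_a\,d\mu_\phi$ in order to obtain genuine linearity in $\phi$, and then carefully tracking the merely lower semicontinuous nature of $g_a$ so as to land on weak, rather than weak*, closedness. Once this reduction is in place, both parts follow by assembling Proposition \ref{measuresbijection}, Lemma \ref{MMoperators}, and part (i).
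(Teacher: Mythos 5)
Your proof is correct, and while it runs on the same engine as the paper's --- Theorem \ref{DPcriterion} --- it diverges at each of the three key steps, so a comparison is worthwhile. For convexity, the paper keeps the supremum in \eqref{criterion} and uses the lattice structure of $D_A(a)\cap Z(A)$ (the order interval $\{z\in C(\hat Z)_{\sa}: f_a\leq z\leq g_a\}$): near-optimal central elements $x$ for $\phi$ and $y$ for $\psi$ are replaced by a common majorant $z$, which makes the right-hand side of \eqref{criterion} additive in the functional. Your linearization $\sup\{\phi(z):z\in D_A(a)\cap Z(A)\}=\int_{\widehat Z}g_a\,d\mu_\phi$ accomplishes the same thing, and it is correctly extracted from the Lemma \ref{Regularity}/Katetov--Tong step in the proof of Theorem \ref{DPcriterion}; it is the more conceptual statement, at the cost of re-importing that measure-theoretic argument where the paper's lattice trick needs nothing beyond the description of $D_A(a)\cap Z(A)$. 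For weak closedness, the paper simply applies Mazur's theorem: $S_\infty(A)$ is convex and norm closed (Theorem \ref{normclosed}), hence weakly closed. Your direct argument --- $g_a$ is bounded Borel, hence defines an element of $Z(A)^{**}$, so each constraint $\phi(a)\leq\int_{\widehat Z}g_a\,d\mu_\phi$ cuts out a $\sigma(A^*,A^{**})$-closed set --- is heavier but self-contained, and it has the merit of showing exactly where weak* closedness escapes (the mere lower semicontinuity of $g_a$), consistent with Examples \ref{notweaklyclosed} and \ref{notweaklyclosed2}. In (ii), the forward inclusion is identical; for the reverse, the paper performs a fresh Hahn--Banach separation and then applies Theorem \ref{DPcriterion} to $\phi'+\psi'$, whereas you invoke the injectivity half of Proposition \ref{measuresbijection} to obtain the clean description of $D_A(\rho)$, for positive maximally mixed $\rho$, as the set of positive maximally mixed functionals agreeing with $\rho$ on $Z(A)$, and then observe that $\phi'+\psi'$ (maximally mixed by (i) together with scale-invariance and norm-additivity on $A_+^*$) agrees with $\phi+\psi$ on the center. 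The two arguments are essentially equivalent --- the separation is hidden inside the proof of Proposition \ref{measuresbijection} --- but yours avoids repeating it and isolates a reusable characterization of the Dixmier sets of maximally mixed functionals.
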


\begin{proof}
(i) Let us show first that $S_\infty(A)$ is convex. Since a scalar multiple of a maximally mixed functional is again maximally mixed, it suffices to show that if $\phi,\psi\in A_+^*$
are maximally mixed, then $\phi+\psi$ is maximally mixed. So let $\phi,\psi\in A_+^*$
be maximally mixed. We show that $\phi+\psi$ satisfies \eqref{criterion}.  Let $a\in A_+$ and $\epsilon>0$.
Since $\phi$ and $\psi$   satisfy \eqref{criterion},  there exist $x,y\in D_A(a)\cap Z(A)$ such that
\[
\phi(a)\leq \phi(x)+\epsilon\hbox{ and } \psi(a)\leq \psi(y)+\epsilon.
\]
By the structure of $D_A(a)\cap Z(A)$ for self-adjoint $a$ we know that it is a lattice. So we can choose $z\in D_A(a)\cap Z(A)$ such that $x,y\leq z$.
Then $(\phi+\psi)(a)\leq (\phi+\psi)(z)+2\epsilon$. This shows that $\phi+\psi$ satisfies  \eqref{criterion} and is therefore maximally mixed.

Since $S_\infty(A)$ is convex and norm closed (Theorem \ref{normclosed}), it is also weakly closed (i.e., closed in the $\sigma(A^*,A^{**})$ topology). 

(ii) The inclusion $D_A(\phi+\psi)\subseteq D_A(\phi)+D_A(\psi)$ is straightforward: if  $T\in \MM(A^*)$ then
\[
T(\phi+\psi)=T\psi+T\psi\in D_A(\phi)+D_A(\psi),
\]
and letting $T$ range through $\MM(A^*)$,  $T(\phi+\psi)$ ranges through all of
$D_A(\phi+\psi)$ (Lemma \ref{MMoperators}).

Let $\phi,\psi\in A_+^*$  be maximally mixed  and suppose, for a contradiction,  that there exist  $\phi'\in D_A(\phi)$ and $\psi'\in D_A(\psi)$ such that $\phi'+\psi'\notin D_A(\phi+\psi)$.
Then  there exist $a\in A_{\sa}$ and $t\in \R$ such that $\rho(a)\leq t$ for all $\rho\in D_A(\phi+\psi)$ while $(\phi'+\psi')(a)>t$. Translating $a$ by a scalar multiple of the unit, and changing $t$ accordingly, we may assume that $a$ is positive. By Lemma \ref{DphiphiD},
$(\phi+\psi)(b)\leq t$ for all $b\in D_A(a)$. Since $\phi+\psi$ and $\phi'+\psi'$ agree on $Z(A)$,
we obtain that $(\phi'+\psi')(b)\leq t$ for all $b\in D_A(a)\cap Z(A)$. But  $\phi'+\psi'$ is maximally mixed by the proof of (i). It follows by Theorem \ref{DPcriterion} that
 $(\phi'+\psi')(a)\leq t$, which contradicts our choice of $a$ and $t$.
\end{proof}
\begin{remark}
The C*-algebras in Examples \ref{notweaklyclosed} and \ref{notweaklyclosed2} both have the Dixmier property (this can be deduced from \cite[Theorem 1.1]{ART}). So $S_\infty(A)$ may fail to be weak* closed for C*-algebras with the Dixmier property.
\end{remark}

\begin{theorem}\label{DPweaklyclosed}
Let $A$ be a unital C*-algebra with the Dixmier property.
The following are equivalent.
\begin{enumerate}
\item The set $S_\infty(A)$ is weak* closed;
\item The set of maximal ideals $M$ such that $A/M$ either is isomorphic to $\C$ or  has no bounded traces  is a closed subset of $\mathrm{Prim}(A)$;
\item For each self-adjoint $a \in A$, the set $D_A(a) \cap Z(A)$ contains a maximal element.
\end{enumerate}
\end{theorem}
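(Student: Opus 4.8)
The plan is to establish the cycle (iii) $\Rightarrow$ (i) $\Rightarrow$ (ii) $\Rightarrow$ (iii), working throughout with the description $D_A(a)\cap Z(A)=\{z\in C(\hat Z):f_a\leq z\leq g_a\}$ recalled above and with the criterion of Theorem \ref{DPcriterion}. The first move is to reformulate (iii) as the statement that $g_a\in C(\hat Z)$ for every self-adjoint $a$. Recall that $D_A(a)\cap Z(A)$ is a lattice (as used in the proof of Corollary \ref{DPconvex}), so any maximal element is automatically the greatest element. Now for fixed self-adjoint $a$ one has the pointwise identity $\sup\{z(M):z\in D_A(a)\cap Z(A)\}=g_a(M)$ for every $M\in\hat Z$: this is what Lemma \ref{Regularity} together with the Katetev--Tong insertion theorem yields on taking the point mass $\delta_M$, exactly as in the proof of Theorem \ref{DPcriterion}. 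Hence if a greatest element $z_a$ exists then $z_a(M)\geq g_a(M)$ for all $M$, while $z_a\leq g_a$, so $z_a=g_a$ is continuous; conversely, if $g_a$ is continuous it clearly is the greatest element of $D_A(a)\cap Z(A)$.

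For (iii) $\Rightarrow$ (i) I would invoke Theorem \ref{DPcriterion}: a state $\phi$ is maximally mixed if and only if $\phi(a)\leq\sup\{\phi(z):z\in D_A(a)\cap Z(A)\}$ for all $a\in A_+$. When $g_a$ is continuous it is the greatest element of $D_A(a)\cap Z(A)$, so (since $\phi\geq 0$) the right-hand side equals $\phi(g_a)$, and the criterion reads $\phi(g_a-a)\geq 0$. As $g_a-a\in A$, the set $\{\phi\in S(A):\phi(g_a-a)\geq 0\}$ is weak* closed, and $S_\infty(A)$ is the intersection of these sets over $a\in A_+$, hence weak* closed. The implication (i) $\Rightarrow$ (ii) is Proposition \ref{MMweakstarclosed}, which holds for all unital C*-algebras.

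The remaining implication (ii) $\Rightarrow$ (iii) is the crux. Fix a self-adjoint $a$ and set $\lambda_a(M):=\max\mathrm{sp}(q_M(a))$. Partition $\hat Z=\mathrm{Max}(A)$ into $X$ (fibers isomorphic to $\C$ or traceless) and $Y:=\hat Z\setminus X$ (fibers with a trace and not $\C$). On $X$ one has $g_a=\lambda_a$ (for traceless fibers by definition, and for $\C$-fibers because there the single spectral value is $\tau_M(a)=\max\mathrm{sp}(q_M(a))$), while on $Y$ one has $g_a=f_a=\tau_M(a)\leq\lambda_a$; thus $g_a\leq\lambda_a$ globally, with equality on $X$. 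The key input is that $\lambda_a$ is upper semicontinuous on $\hat Z$. Granting this, at any $M_0\in X$ we obtain $\limsup_{M\to M_0}g_a(M)\leq\limsup_{M\to M_0}\lambda_a(M)\leq\lambda_a(M_0)=g_a(M_0)$, so $g_a$ is upper semicontinuous at $M_0$ and hence (being always lower semicontinuous) continuous there. For $M_0\in Y$ I would use hypothesis (ii): since $X$ is closed, $Y$ is open in $\hat Z$, and on $Y$ the function $g_a=f_a$ is both lower semicontinuous (inherited from $g_a$) and upper semicontinuous (inherited from $f_a$), hence continuous on $Y$; openness of $Y$ then gives continuity of $g_a$ at $M_0$. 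Therefore $g_a$ is continuous on all of $\hat Z$, which is (iii).

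The main obstacle is the upper semicontinuity of $\lambda_a=\max\mathrm{sp}(q_M(a))$. The function $M\mapsto\|q_M(x)\|$ is automatically \emph{lower} semicontinuous, so the content is the reverse inequality, which is precisely the upper-semicontinuous-bundle structure afforded by the Dixmier property: weak centrality makes $A$ a $C(\hat Z)$-algebra, so $M\mapsto\|q_M(x)\|$ is upper semicontinuous for every $x\in A$, and $\lambda_a(M)=\|q_M(a+\|a\|1)\|-\|a\|$; this is where the Dixmier property is genuinely used. A secondary point requiring care is that the identification $\hat Z\cong\mathrm{Max}(A)$ is a homeomorphism, so that closedness of $X$ in $\mathrm{Prim}(A)$ does transfer to openness of $Y$ in $\hat Z$ (note that $\mathrm{Max}(A)$ need not be closed in $\mathrm{Prim}(A)$, as Example \ref{notweaklyclosed2} shows, but for the direction needed here only closedness of $X$ within $\hat Z$ is used).
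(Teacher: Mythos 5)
Your preliminary reformulation of (iii) as continuity of $g_a$ (via point masses in Lemma \ref{Regularity} plus insertion), your implication (iii)$\Rightarrow$(i), and the citation of Proposition \ref{MMweakstarclosed} for (i)$\Rightarrow$(ii) are all correct and essentially coincide with the paper. The gap is in (ii)$\Rightarrow$(iii), at precisely the step you flag as the main obstacle: the claim that weak centrality / the $C(\hat Z)$-algebra structure makes $M\mapsto\|q_M(x)\|$ upper semicontinuous on $\hat Z$. This is false. What the $C(\hat Z)$-algebra structure gives is upper semicontinuity of the \emph{Glimm-fibre} norms $M\mapsto\|q_{J_M}(x)\|$, where $J_M=(M\cap Z(A))A$ is the Glimm ideal; weak centrality says $M$ is the unique maximal ideal containing $J_M$, but in general $J_M\subsetneq M$, and $\|q_M(x)\|$ can be strictly smaller than $\|q_{J_M}(x)\|$ and can fail upper semicontinuity. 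A concrete counterexample is the algebra $A$ of Example \ref{notweaklyclosed2}, which has the Dixmier property: let $M_\infty$ be the unique maximal ideal containing $I_\infty$ (so $A/M_\infty\cong(B\otimes\mathcal K)^\sim/(B\otimes\mathcal K)\cong\C$), and let $a\in A$ be the constant function equal to a fixed positive norm-one $z\in B=M_1(B)\subseteq M_n(B)$. For every central element, norm-continuity at $\infty$ forces its scalar values at the $I_n$ to converge to its scalar value at $M_\infty$, so $I_n\to M_\infty$ in $\hat Z$; yet $\|q_{I_n}(a)\|=\|e_nz\|=1$ for all $n$ while $\|q_{M_\infty}(a)\|=0$ (since $a(\infty)=z\in B\otimes\mathcal K$). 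So your $\lambda_a$ (and indeed $g_a$ itself, since every $I_n$ lies in $X$) fails upper semicontinuity at a point of $X$.

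The failure is structural, not a fixable technicality: your proof of (ii)$\Rightarrow$(iii) invokes hypothesis (ii) only through openness of $Y$ in $\hat Z$, i.e., only through closedness of $X$ \emph{in} $\hat Z$, which is strictly weaker than closedness in $\mathrm{Prim}(A)$. In Example \ref{notweaklyclosed2} one has $X=\mathrm{Max}(A)$, so $X$ is trivially closed in $\hat Z$ and $Y=\varnothing$; every step of your argument except the semicontinuity claim is then satisfied, and you would conclude (iii), hence (i) -- contradicting the paper's conclusion (via Proposition \ref{MMweakstarclosed}) that $S_\infty(A)$ is not weak* closed for that algebra. The paper's proof uses (ii) in an essentially stronger way at points of $X$: the set $\{P\in\mathrm{Prim}(A):\|q_P(a)\|\geq t\}$ is compact in $\mathrm{Prim}(A)$ (\cite[Proposition 3.3.7]{Dix}), so its intersection with the $\mathrm{Prim}(A)$-\emph{closed} set $X$ is compact, hence closed in the Hausdorff space $\hat Z$; combined with closedness of $\{M\in Y:\tau_M(a)\geq t\}$ (from \cite[Theorem 2.6]{ART}) this gives upper semicontinuity of $g_a$. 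Any repair of your argument has to use (ii) in this way on $X$, which collapses it back to the paper's proof. (A side issue: your assertion that $\hat Z\cong\mathrm{Max}(A)$ is a homeomorphism for the relative $\mathrm{Prim}(A)$-topology also fails in Example \ref{notweaklyclosed2} -- the closure of $\{I_n\}$ in $\mathrm{Prim}(A)$ meets $\mathrm{Max}(A)$ in everything, unlike in $\hat Z$; the transfer of closedness from $\mathrm{Prim}(A)$ to $\hat Z$ that one actually needs goes through the same compactness argument, not through a homeomorphism.)
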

	
\begin{proof}
(i)$\Rightarrow$(ii): This is Proposition \ref{MMweakstarclosed} (no Dixmier property required).

(ii)$\Rightarrow$(iii):
By translating, we may assume that $a \geq 0$.
Let $X$ denote the set of maximal ideals $M\in \mathrm{Max}(A)$ such that $A/M$ either is isomorphic to $\C$ or  has no bounded traces, and we assume that this set is closed in $\mathrm{Prim}(A)$.
It is evident from the description of $D_A(a) \cap Z(A)$, at the beginning of this section, that we need only show that the function $g_a:\hat Z \to \R$ from \eqref{g_aDef} is continuous.
Since $g_a$ is always lower semicontinuous, it remains	to show that it is upper semicontinuous.
Let $t>0$.
Set
\[ Y:=\{M\in \mathrm{Max}(A) : T(A/M)\neq \varnothing\}, \]
which is closed in $\mathrm{Max}(A)$ by \cite[Theorem 2.6]{ART}; for $M \in Y$, $A$ has a unique tracial state $\tau_M$ that factors through $A/M$.
Since $\tau_M$ depends weak* continuously on $M\in Y$ (\cite[Theorem 2.6]{ART}),
\[
\{M\in Y:\tau_M(a)\geq t\}
\]
is closed in $\mathrm{Max}(A)$. Also, $\{M\in \mathrm{Prim}(A):\|q_M(a)\|\geq t\}$
is a compact subset of $\mathrm{Prim}(A)$ (\cite[Proposition 
3.3.7]{Dix}), from which (along with that $X$ is closed in $\mathrm{Prim}(A)$) we deduce that
\[
\{M\in \mathrm{Prim}(A):\|q_M(a)\|\geq t\}\cap X
\]
is compact. Since $\mathrm{Max}(A)$ is Hausdorff, the set above is also closed in $\mathrm{Max}(A)$. Therefore,
\[
\{M\in Y:\tau_M(a)\geq t\}\cup (\{M\in \mathrm{Prim}(A):\|q_M(a)\|\geq t\}\cap X)
\]
is closed in $\mathrm{Max}(A)$. But this set is $g_a^{-1}([t,\infty))$, and therefore, $g_a$ is upper semicontinuous.

(iii)$\Rightarrow$(i):
For each self-adjoint element $a \in A$, let $z_a$ denote the maximal element of $D_A(a) \cap Z(A)$, which exists since we are assuming (iii).
Given a state  $\phi$, the inequality \eqref{criterion}
 is equivalent to  $\phi(a)\leq \phi(z_a)$ for all $a\in A_+$.
 The latter inequality is clearly preserved under weak* limits. By Theorem \ref{DPcriterion}, $S_\infty(A)$ is weak* closed.
\end{proof}
	
We recover as a corollary Alberti's theorem on the maximally mixed states
of a von Neumann algebra (\cite[Theorem 5.2]{Alberti} and \cite[Theorem 4-12]{Alb-Uhl}):
\begin{corollary}
Let $A$ be a von Neumann algebra. Then $S_\infty(A)$ agrees with the weak* closure of the convex hull of the set of tracial states and type (B) states.
\end{corollary}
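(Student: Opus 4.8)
The plan is to show that, for a von Neumann algebra $A$, the set $S_\infty(A)$ is weak* closed; once this is known the corollary follows formally. Indeed, $A$ has the Dixmier property (this is Dixmier's approximation theorem), so Theorem \ref{maxmixS} applies and yields
\[
\overline{\mathrm{co}(T(A)\cup S_{(B)}(A))}^{\|\cdot\|}\subseteq S_\infty(A)\subseteq \overline{\mathrm{co}(T(A)\cup S_{(B)}(A))}^{\mathrm{weak}^*}.
\]
Passing to weak* closures, the leftmost and rightmost sets have the same weak* closure, namely $\overline{\mathrm{co}(T(A)\cup S_{(B)}(A))}^{\mathrm{weak}^*}$, so $\overline{S_\infty(A)}^{\mathrm{weak}^*}$ equals this set as well. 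Hence, if $S_\infty(A)$ is itself weak* closed, we obtain exactly $S_\infty(A)=\overline{\mathrm{co}(T(A)\cup S_{(B)}(A))}^{\mathrm{weak}^*}$, which is the assertion.

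To prove weak* closedness I would invoke Theorem \ref{DPweaklyclosed}: since $A$ has the Dixmier property, it suffices to verify condition (iii), namely that for each self-adjoint $a\in A$ the set $D_A(a)\cap Z(A)$ has a maximal element. Using the description $D_A(a)\cap Z(A)=\{z\in C(\hat Z): z=z^*,\ f_a\leq z\leq g_a\}$ recalled before Proposition \ref{measuresbijection}, a maximal element exists if and only if $g_a$ is continuous: if $g_a\in C(\hat Z)$ then $g_a$ is the top of the order interval, while conversely any maximal element $z^*$ satisfies $z^*\le g_a$ and, by the insertion argument used in the proof of Theorem \ref{DPcriterion}, also $z^*\ge g_a$ pointwise, forcing $z^*=g_a$. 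So the task reduces to showing that the function $g_a$ of \eqref{g_aDef} is continuous on $\hat Z=\mathrm{Max}(A)$ for every self-adjoint $a$.

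Here I would split $A$ along the central projection separating its finite and properly infinite summands, giving a clopen partition $\hat Z=\hat Z_{f}\sqcup \hat Z_{pi}$, and treat each piece. On $\hat Z_{f}$ every fibre $A/M$ has a unique tracial state and $g_a$ coincides with the center-valued trace of $a$, which is a genuine element of $Z(A)=C(\hat Z)$ and hence continuous. On $\hat Z_{pi}$ every fibre is properly infinite (proper infiniteness passes to quotients), hence has no bounded trace, so $g_a(M)=\max\mathrm{sp}(q_M(a))$. The continuity of this last function is the main obstacle. I expect to derive it from the fact that, for a von Neumann algebra, the norm function $M\mapsto \|q_M(b)\|$ is continuous over the center: writing $g_a(M)=\inf\{t\in\R:\,\|q_M((a-t1)_+)\|=0\}$ and exploiting this continuity — which, through reduction/direct-integral theory, reflects that the fibrewise maximum of the spectrum of a bounded measurable field of operators defines a central element whose Gelfand transform is $g_a$ — should give $g_a\in C(\hat Z)$. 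Making this continuity statement precise and uniform across the type ${\rm II}_\infty$ and type ${\rm III}$ summands, and in the non-separable-predual case, is where the real work lies; the remaining steps are formal.
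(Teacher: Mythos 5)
Your outer skeleton is the same as the paper's: invoke the Dixmier property (Dixmier approximation), sandwich $S_\infty(A)$ via Theorem \ref{maxmixS}, and reduce everything to weak* closedness of $S_\infty(A)$ via Theorem \ref{DPweaklyclosed}; your treatment of the finite summand by the centre-valued trace is also correct. The problem is that the one step carrying all the content --- continuity of $g_a$ on the properly infinite summand, i.e.\ verification of condition (iii) there --- is exactly what you do not prove, and you say so yourself (``where the real work lies''). That is a genuine gap, not a loose end: without it you have only re-derived the sandwich of Theorem \ref{maxmixS}, which holds for every unital C*-algebra.

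Moreover, the route you sketch for closing it would fail. First, reduction/direct-integral theory requires a separable predual, and the statement is for arbitrary von Neumann algebras (already $\ell^\infty(\mathbb{N},M_2)$, with Glimm space $\beta\mathbb{N}$, is outside its scope). Second, and more fundamentally, direct integral theory decomposes $A$ over its centre, so its ``fibres'' are quotients by Glimm ideals, whereas $g_a$ in \eqref{g_aDef} is computed in quotients $q_M(A)=A/M$ by \emph{maximal} ideals, which in the properly infinite case are strictly larger: for $A=B(H)$ the Glimm ideal is $\{0\}$ but the maximal ideal is $K(H)$, so the ``fibrewise maximum of the spectrum'' is $\max\mathrm{sp}(a)$ while $g_a$ is the maximum of the \emph{essential} spectrum --- these differ for any finite-rank positive $a$. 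So the direct-integral object you describe is not $g_a$ at all. The missing idea, which is what the paper's proof supplies, is Halpern's theorem \cite{Halpern}: in a properly infinite von Neumann algebra the maximal ideals are precisely the primitive ideals containing the strong radical $J$. This makes the set $X$ of Theorem \ref{DPweaklyclosed} (ii) visibly closed in $\mathrm{Prim}(A)$ (it is the hull of $A_{\mathrm f}\oplus J$ united with the hull of the commutator ideal), and then the already-proven implication (ii)$\Rightarrow$(iii)$\Rightarrow$(i) of that theorem yields the continuity of $g_a$ for free --- via compactness of the sets $\{M:\|q_M(a)\|\geq t\}$ in $\mathrm{Prim}(A)$ and the continuity of $M\mapsto\tau_M$ from \cite{ART} --- with no separability and no reduction theory. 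In short: verify (ii) using Halpern, rather than attack (iii) head-on.
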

\begin{proof}
Let $A=A_{\mathrm f}\bigoplus A_{\mathrm{pi}}$	be the decomposition of $A$ into a finite and a properly infinite von Neumann algebra.
Let $J\subseteq A_{\mathrm{pi}}$ denote the strong Jacobson radical of $A_{\mathrm{pi}}$, i.e, the intersection of all maximal ideals of $A_{\mathrm{pi}}$.
 By \cite[Proposition 2.3]{Halpern}, $N\in \mathrm{Prim}(A_{\mathrm{pi}})$ is a maximal ideal of $A_{\mathrm{pi}}$ if and only if $J\subseteq N$. It follows that
 $N\in \mathrm{Prim}(A)$ is maximal (in $A$) and such that $A/N$ has no bounded traces if and only if $A_{\mathrm{f}}\oplus J\subseteq N$.
 This set is thus a closed subset of $\mathrm{Prim}(A)$. On the other hand,  the set of $M\in \mathrm{Prim}(A)$ such that
 $A/M\cong \C$ is a closed subset of $\mathrm{Prim}(A)$ (indeed, these are the $M\in \mathrm{Prim}(A)$ that contain the ideal generated by the commutators of $A$, which is the smallest ideal the quotient by which is abelian). So the union of these two sets is closed.  Moreover, $A$
 has the Dixmier property (by Dixmier's approximation theorem). Thus, by the previous theorem, $S_\infty(A)$ is weak* closed.
 The result then follows from Theorem \ref{maxmixS}.
\end{proof}

We end this section by taking advantage of the insight we have gained in the case of the Dixmier property, to provide some examples alluded to earlier.
The first example shows that the set of maximally mixed states may be larger than the norm-closed convex hull of the tracial states and type (B) states.

\begin{example}\label{biggerthangammaclosure}
Let $B$ be a simple unital C*-algebra with no bounded traces, and set $A:=C([0,1],B)$.
If $\phi$ is in the norm-closed convex hull of the type (B) states, then the state that $\phi$ induces on the centre is in the norm-closed convex hull of point-masses, and therefore corresponds to a discrete measure on $[0,1]$.
However, $A$ has the Dixmier property by \cite[Theorem 2.6]{ART} and hence  $S_\infty(A)$ is weak* closed by Theorem \ref{DPweaklyclosed} ((ii) $\Rightarrow$ (i)). Since every pure state of $A$ is of type (B), it follows from Theorem~\ref{maxmixS} that $S_{\infty}(A)=S(A)$.
So the norm-closed convex hull of the type (B) states (and tracial states, as there are none) is only a small part of $S_\infty(A)$ in this case.
\end{example}

The next example addresses the converse to Theorem \ref{jordan}.

\begin{example}
\label{JordanConverseFails}
Let $A$ be a simple unital C*-algebra with no bounded traces. Then $A$ has the Dixmier property (\cite{HZ}).
Let $\phi$ be a nonzero self-adjoint functional on $A$ such that $\phi(1)=0$.
Then $\phi$ is not maximally mixed, because if it were, then since the zero functional is maximally mixed, it would follow by Proposition \ref{measuresbijection} that $D_A(\phi)=D_A(0)=\{0\}$.
However, by Corollary \ref{MMsimplealgebra} (ii), both the positive and negative parts of $\phi$ are maximally mixed.
\end{example}

\section{Hausdorff primitive spectrum}\label{Hausdorffcase}

Here we impose a different property -- Hausdorffness of the primitive ideal space -- to make the study of the structure of $S_\infty(A)$ tractable.

Given a C*-algebra $A$, we continue to denote by $T(A)$ the set of tracial states on $A$.
\begin{theorem}
Let $A$ be a unital C*-algebra with Hausdorff primitive spectrum.
\begin{enumerate}
\item
Suppose that $T(A)=\varnothing$. Then every state of $A$ is maximally mixed.

\item
Suppose that $T(A)\neq \varnothing$. Then the set
\[
Y:=\{M\in \mathrm{Max}(A) : T(A/M)\neq \varnothing\}
\]	
is non-empty and closed in $\mathrm{Max}(A)$ and
\[
S_\infty(A)=\mathrm{co}(T(A)\cup S(A)^J),
\]
where $J:= \bigcap_{M\in Y} M$ is a proper closed ideal of $A$, and $S(A)^J$ consists of all states in $S(A)$ which arise as extensions of states in $S(J)$.
\item
Questions \ref{MMconvexquestion} and \ref{Qsumdixmierset} have affirmative answers for $A$.
\end{enumerate}
\end{theorem}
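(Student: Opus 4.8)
The plan is to extract the full structural force of Hausdorffness of $\mathrm{Prim}(A)$ and then route everything through the Dixmier-property results of Section \ref{DPcase}. First I would record the standing facts. Since $\mathrm{Prim}(A)$ is Hausdorff it is $T_1$, so every primitive ideal is closed; as a proper ideal of a unital algebra lies in a maximal ideal and simple unital C*-algebras are primitive, a short argument gives $\mathrm{Prim}(A)=\mathrm{Max}(A)=:X$, a compact Hausdorff space. By the Dauns--Hofmann theorem $Z(A)=C(X)$, so $A$ is a $C(X)$-algebra with simple unital quotients $A_x:=A/M_x$, and $A$ is weakly central. A tracial state of $A_x$ pulls back to one of $A$, so $T(A)=\varnothing$ forces every $A_x$ to be traceless.

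The engine of the argument is the following lemma, which I would isolate: \emph{if $B$ is unital with $\mathrm{Prim}(B)$ Hausdorff and every quotient $B/M$ is either $\cong\C$ or a traceless simple C*-algebra, then $S_\infty(B)=S(B)$.} Each such quotient has at most one tracial state, so by weak centrality and \cite[Theorem 1.1]{ART} the algebra $B$ has the Dixmier property. The set of maximal ideals $M$ with $B/M\cong\C$ or traceless is then all of $\mathrm{Prim}(B)$, hence closed, so $S_\infty(B)$ is weak* closed by Theorem \ref{DPweaklyclosed} and convex by Corollary \ref{DPconvex}. Every pure state of $B$ factors through some quotient $B/M$ and is therefore multiplicative or of type (B), hence maximally mixed (Lemma \ref{InfiniteSimpleMM}). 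Thus $S_\infty(B)$ is a weak* closed convex set containing every extreme point of $S(B)$, and Krein--Milman yields $S_\infty(B)=S(B)$. Part (i) is then immediate, applying the lemma to $B=A$ (all quotients being traceless simple when $T(A)=\varnothing$).

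For part (ii) I would first obtain $Y\neq\varnothing$ by disintegrating a tracial state $\tau$ over the central measure $\mu=\tau|_{C(X)}$, which produces tracial states on $\mu$-almost every $A_x$; closedness of $Y$ in $\mathrm{Max}(A)$ is \cite[Theorem 2.6]{ART}. Put $J=\bigcap_{M\in Y}M$, so $\mathrm{hull}(J)=Y$, $\mathrm{Prim}(A/J)=Y$ and $\mathrm{Prim}(J)=X\setminus Y=:W$. The crucial point is that \emph{every state of $J$ is maximally mixed}: the one-point compactification $W^+$ is Hausdorff and equals $\mathrm{Prim}(J^\sim)$, whose quotients are the traceless simple $A_x$ ($x\in W$) together with a single $\C$, so the lemma gives $S_\infty(J^\sim)=S(J^\sim)$, and Corollary \ref{MMunitization} transfers this to $J$. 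On the other side, every simple quotient of $A/J$ carries a bounded trace, so $S_\infty(A/J)=T(A/J)$ by Corollary \ref{onlytraces}. For a state $\phi$ with canonical decomposition $\phi=\phi_1+\phi_2$, where $\phi_1\in(A^*_+)^J$ and $\phi_2\in(A^*_+)_J$, Theorem \ref{MMextensions} then says $\phi$ is maximally mixed iff $\phi_1|_J$ is maximally mixed (automatic) and the functional induced by $\phi_2$ on $A/J$ is maximally mixed, i.e.\ tracial. Since $T(J)=\varnothing$, every $\tau\in T(A)$ vanishes on $J$ and factors through $A/J$; normalising the two pieces and using $\|\phi\|=\|\phi_1\|+\|\phi_2\|$ identifies $S_\infty(A)$ precisely as $\mathrm{co}(T(A)\cup S(A)^J)$. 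For part (iii), convexity (Question \ref{MMconvexquestion}) is now visible, since $S_\infty(A)$ is either $S(A)$ or a convex hull; for Question \ref{Qsumdixmierset} I would again split along $J$ via Theorem \ref{MMextensions}, the parts through $A/J$ being tracial (so their Dixmier sets are singletons and add trivially), while for the parts living on $J$ the additivity $D_A(\phi_1+\psi_1)=D_A(\phi_1)+D_A(\psi_1)$ transfers through the restriction isomorphisms of Proposition \ref{Dextensions} and Corollary \ref{MMunitization} to the same identity in $J^\sim$, where it holds by Corollary \ref{DPconvex}(ii).

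The main obstacle I anticipate is establishing, cleanly and without circularity, that \emph{every} state of the non-unital ideal $J$ is maximally mixed: this forces the bootstrap through $J^\sim$, and one must verify carefully that $\mathrm{Prim}(J^\sim)=W^+$ is compact Hausdorff with exactly the stated quotients so that the engine lemma applies. The subsidiary trace-disintegration facts ($Y\neq\varnothing$ and $T(J)=\varnothing$) and the correct invocation of \cite{ART} for the Dixmier property (weak centrality together with the fibrewise ``at most one trace'' condition) are the other points requiring care.
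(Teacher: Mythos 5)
Your architecture is essentially the paper's: Hausdorffness plus Dauns--Hofmann gives centrality, the Dixmier-property machinery of Section \ref{DPcase} (Theorem \ref{DPweaklyclosed}, Corollary \ref{DPconvex}) is applied both to $A$ in case (i) and to the unitization of $J$ in case (ii), and Theorem \ref{MMextensions} splits a general state along $J$; your ``engine lemma'' and the bootstrap through $J^\sim=J+\C1$, as well as your treatment of part (iii), are exactly what the paper does. But there is a genuine gap at the closedness of $Y$. You dispose of it by citing \cite[Theorem~2.6]{ART}, and that citation does not apply: as it is used throughout this paper, that theorem concerns algebras with the Dixmier property (it is a characterization of the form ``Dixmier property $\iff$ weak centrality $+$ at most one tracial state per simple quotient $+$ $Y$ closed and $M\mapsto\tau_M$ weak*-continuous''). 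In case (ii) the algebra $A$ is central but need \emph{not} have the Dixmier property, nor need its simple quotients have at most one trace --- take $A=C([0,1],B)$ with $B$ simple unital carrying two tracial states --- so closedness of $Y$ cannot be extracted from the Dixmier-property theory; it is precisely the nontrivial consequence of Hausdorffness that has to be proved here. The paper does it by hand: given a net $(M_i)$ in $Y$ converging to $M$, choose tracial states $\tau_i$ vanishing on $M_i$, pass to a weak*-convergent subnet with limit $\tau\in T(A)$, note that $\tau|_{Z(A)}$ is then the pure state of $Z(A)$ with kernel $M\cap Z(A)$, and use the Cauchy--Schwarz inequality to conclude that $\tau$ annihilates the Glimm ideal $(M\cap Z(A))A=M$, so that $\tau$ induces a trace on $A/M$ and $M\in Y$. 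Note also that your identification $\mathrm{Prim}(J)=X\setminus Y$ (needed to form $W^+$) already presupposes this closedness.

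Two subsidiary facts are likewise asserted rather than proved, and you flag only some of them. For $Y\neq\varnothing$ you invoke a disintegration of a tracial state over its central measure into fibre traces; this is a heavier tool than needed (with separability caveats) and you give no argument, whereas the paper takes an extreme point $\tau$ of $T(A)$, uses \cite[Lemma~2.4]{ART} to see that $\tau|_{Z(A)}$ is pure, and then the same Cauchy--Schwarz step to land $\tau$ on a single fibre. The fact $T(J)=\varnothing$ (needed so that every tracial state of $A$ kills $J$) is also left unproved (the paper cites \cite[Lemma~2.2]{ART}). Finally, in your engine lemma, deducing the Dixmier property of $B$ from ``weak centrality and \cite[Theorem~1.1]{ART}'' alone is incomplete for the same reason as above: the tracial closedness/continuity conditions in that characterization must be checked (they are easy here, since the set of $M$ with $B/M\cong\C$ is the hull of the commutator ideal). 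The remaining steps --- $S_\infty(J+\C1)=S(J+\C1)$, $S_\infty(A/J)=T(A/J)$ via Corollary \ref{onlytraces}, the identification $S_\infty(A)=\mathrm{co}(T(A)\cup S(A)^J)$ via Theorem \ref{MMextensions}, and part (iii) via Corollary \ref{DPconvex} and Proposition \ref{Dextensions} --- are correct and coincide with the paper's.
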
	
\begin{proof}
Observe first that, since $\mathrm{Prim}(A)$ is Hausdorff, $\mathrm{Prim}(A)=\mathrm{Max}(A)=\mathrm{Glimm}(A)$,
and these spaces are all homeomorphic to $\mathrm{Max}(Z(A))$ via the assignment $M\mapsto M\cap Z(A)$. For each maximal ideal $N$ of $Z(A)$, let $\phi_N$ be the unique pure state of $Z(A)$ with kernel equal to $N$.

(i) Since the continuous functions on the compact Hausdorff space $\mathrm{Prim}(A)$ separate the points, it follows from the Dauns-Hofmann theorem that $A$ is a central C*-algebra. Combining this with the fact that $T(A)$ is empty, we obtain from \cite[Theorem 2.6]{ART} that $A$ has the Dixmier property. Every pure state of $A$ is of type (B), so by Theorem \ref{maxmixS}, $S_\infty(A)$ is weak* dense in $S(A)$. It follows from Theorem \ref{DPweaklyclosed} that every state of $A$ is maximally mixed.

 (ii) Since $T(A)$ is non-empty, it contains an extreme point $\tau$ by the Krein-Milman theorem. By \cite[Lemma 2.4]{ART},
 $\tau|_{Z(A)}$ is a pure state of $Z(A)$ and hence annihilates $M\cap Z(A)$ for some $M\in \mathrm{Max}(A)$. By the Cauchy--Schwartz inequality for states, $\tau$ annihilates the Glimm ideal $(M\cap Z(A))A$. But, as noted above, $(M\cap Z(A))A=M$ and so $\tau$ induces a tracial state of $A/M$. Thus $Y$ is non-empty. Moreover, $\tau(J)=\{0\}$ and so, by the Krein-Milman theorem, every tracial state of $A$ annihilates $J$.

  To show that $Y$ is closed, suppose that $(M_i)$ is a net in $Y$ that is convergent to $M\in\mathrm{Max}(A)$.
  For each $i$, let $\tau_i$
be a tracial state of $A$ that vanishes on $M_i$.
  Since $T(A)$ is weak$^*$-compact, there exist $\tau\in T(A)$ and a subnet $(\tau_{i_j})$ such that $\tau_{{i_j}}\to_j \tau$. Then
$$\tau|_{Z(A)} = \lim_j  \phi_{M_{i_j}\cap Z(A)} = \phi_{M\cap Z(A)}.$$
It follows from the Cauchy--Schwartz inequality for states that $\tau$ annihilates the Glimm ideal $(M\cap Z(A))A$ and so $\tau$ induces a tracial state of $A/M$ as before. Thus $M\in Y$, as required.

Since $Y$ is closed, every maximal ideal of $A/J$ has the form $M/J$ for some $M\in Y$ and hence every simple quotient of $A/J$ has a tracial state. It follows by Corollary \ref{onlytraces} that $S_\infty(A/J)=T(A/J)$.
Letting $S_2$ be the set of maximally mixed states of $S_\infty(A)$ which factor through $A/J$, it follows by Theorem \ref{MMextensions} (ii) that
\[ S_2 = S_\infty(A/J) \circ q_J = T(A/J) \circ q_J = T(A). \]

Under the Dauns--Hofmann isomorphism between $Z(A)$ and $C(\mathrm{Prim}(A))$, $Z(J)$ corresponds to $C_0(\mathrm{Prim}(J))$, where $\mathrm{Prim}(J)$ is identified with an open subset of $\mathrm{Prim}(A)$ (namely $\mathrm{Prim}(A)\setminus Y$ ) in the usual way.
It follows that $Z(J)$ separates the primitive ideals of $J+\C1$ and hence $J+\C1$ is a central C*-algebra.
Since $J$ has no tracial states (this follows from \cite[Lemma 2.2]{ART}), $J+\C1$ has a unique tracial state, namely the one factoring through the quotient $(J+\mathbb{C}1)/J$.
Hence by \cite[Theorem 2.6]{ART}, $J+\mathbb{C}1$ has the Dixmier property.
We also have that $\mathrm{Prim}(J+\mathbb{C}1)=\mathrm{Max}(J+\mathbb{C}1)$, with every simple quotient being either traceless or isomorphic to $\C$, and thus by Theorem \ref{DPweaklyclosed}, $S_\infty(J+\mathbb{C}1)$ is weak* closed.
Since every pure state is either type (B) or tracial, it now follows from Theorem \ref{maxmixS} that $S_\infty(J+\mathbb{C}1)=S(J+\mathbb{C}1)$.
By Theorem \ref{MMextensions} (i) (used once with $J \vartriangleleft J+\mathbb{C}1$ and again with $J \vartriangleleft A$), $S(A)^J \subseteq S_\infty(A)$.
Thus, letting $S_1$ be the set of maximally mixed states of $A$ which are extensions of states from $J$, we have
\[ S_1 = S(A)^J. \]

By Theorem \ref{MMextensions} (iii), we have
\[
S_\infty(A) = \mathrm{co}(S_1 \cup S_2) = \mathrm{co}(S(A)^J \cup T(A)), \]
as required.

(iii)
It is evident in both the cases covered by (i) and (ii) that $S_\infty(A)$ is convex.
Now let $\phi,\psi \in A_+^*$ be maximally mixed, and let's argue that $D_A(\phi+\psi)=D_A(\phi)+D_A(\psi)$.
In case (i), we saw that $A$ has the Dixmier property, so  this holds by Corollary \ref{DPconvex} (ii).

In case (ii), write $\phi = \phi_1+\phi_2$ and $\psi=\psi_1+\psi_2$ where $\phi_1,\psi_1$ are positive tracial functionals and $\phi_2,\phi_2$ are non-negative scalar multiples of states in $S(A)^J$; by Theorem \ref{MMextensions} (i), $\phi_2|_J$ and $\psi_2|_J$ are maximally mixed functionals on $J$. Thus, so are their norm-preserving positive extensions to
$J+\C1$ (Theorem \ref{MMextensions} (i)).
Since $J+\C1$ has the Dixmier property (seen in the proof of (ii)), we have by Corollary \ref{DPconvex} (ii) that
\[
D_{J+\C1}((\phi_2+\psi_2)|_{J+\C1}) = D_{J+\C1}(\phi_2|_{J+\C1}) + D_{J+\C1}(\psi_2|_{J+\C1}).
 \]
Further, by Proposition \ref{Dextensions} (i), the same holds restricting $\phi_2$ and $\psi_2$ to $J$:
\[
D_{J}((\phi_2+\psi_2)|_{J}) = D_{J}(\phi_2|_{J}) + D_{J}(\psi_2|_{J}).
\]
Then we have
\begin{align*}
D_A(\phi+\psi)
&= D_A(\phi_1+\phi_2+\psi_1+\psi_2) \\
&= D_A(\phi_1+\psi_1)+D_A(\phi_2+\psi_2) \\
&= D_A(\phi_1)+D_A(\psi_1)+D_J((\phi_2+\psi_2)|_J) \circ \iota_J^* \\
&= D_A(\phi_1)+D_A(\psi_1)+(D_J(\phi_2|_J)+D_J(\psi_2))\circ\iota_J^* \\
&= D_A(\phi_1)+D_A(\psi_1)+D_A(\phi_2)+D_A(\psi_2) \\
&= D_A(\phi)+D_A(\psi)
\end{align*}
where we used Proposition \ref{Dextensions} (i) in the third and fifth equalities, and Proposition \ref{ABstatesConvexity} (the case that one of the functionals is tracial) in the second, third, and final equalities.
\end{proof}

\end{document}